\providecommand\@dotsep{5}\def\listtodoname{List of Todos}\def\listoftodos{\hypersetup{linkcolor=black}\@starttoc{tdo}\listtodoname\hypersetup{linkcolor=blue}}\makeatother
\newtheorem{lemma}{Lemma}[section]
\newtheorem{proposition}{Proposition}[section]
\newtheorem{theorem}{Theorem}[section]
\newtheorem{definition}{Definition}[section]
\theoremstyle{remark} 
\newtheorem{remark}{Remark}[section]
\def\R{\mathbb R}
\renewcommand{\i}{\mathrm{i}}
\newcommand{\Id}{\textrm{\rm Id}}
\renewcommand{\r}[1]{{\eqref{#1}}}
\newcommand{\be}[1]{\begin{equation}\label{#1}}
\newcommand{\ee}{\end{equation}}
\renewcommand{\d}{d}
\renewcommand{\mathbf}{\boldsymbol}
\DeclareMathOperator{\supp}{supp}
\date{\today}
\title[The backscattering problem for time dependent potentials]{The backscattering problem for time-dependent potentials}
\author[M. Nursultanov]{Medet Nursultanov}
\address {Department of Mathematics and Statistics, University of Helsinki, Helsinki, and Institute of Mathematics and Mathematical Modeling, Almaty, Kazakhstan}
\author[L. Oksanen]{Lauri Oksanen}
\address{Department of Mathematics, University of Helsinki, 
Helsinki, Finland}
\author[P. Stefanov]{Plamen Stefanov}
\address{Department of Mathematics, Purdue University, West Lafayette, IN 47907, USA}
\subjclass[2010]{Primary: 35P25, 35R30 Secondary: 35L05, 47F05}
\keywords{Inverse problems, scattering theory, time dependent potentials}
\begin{document}
\begin{abstract}
	We study the inverse backscattering problem for time-dependent potentials. We prove uniqueness and Lipshitz stability for the recovery of small potentials.
\end{abstract}
\maketitle



\section{Introduction} 
Let   $q(t,x)$ be smooth and supported in the cylinder $\R\times\Omega$, where $\Omega\subset B(0,\rho) := \{|x|<\rho\}$ with some $\rho>0$ is a fixed domain un $\R^n$.  We study the inverse back-scattering problem for the wave equation
\be{eq1}
(\partial_t^2-\Delta+q(t,x))u=0,   \quad (t,x)\in \R\times \R^n,
\ee
$n\ge3$, odd. We show that small enough potentials $q$ are \textit{stably} recoverable from the data. 

Results for stationary potentials $q(x)$ have been proven in \cite{Eskin-Ralston_2D, Eskin-Ralston_3D,  S-CPDE, RuizV-05, Wang-02}. Even though stability (say, of conditional H\"older type) has not been stated explicitly there (see also \cite{SU-JFA09} for a related result), it follows from the fact that the linearization of the problem near $q=0$ is essentially the Fourier transform of $q$, see, e.g., \cite{S-CPDE}. In terms of uniqueness, the best known result is generic uniqueness so far. 

The inverse problem of recovery of $q(t,x)$ from ``near-field'' scattering data, closely related to the inverse scattering one but not restricted to back-scattering, has been studied in \cite{MR1004174, Ramm-Sj, Salazar_13, waters2014stable, Aicha_15, Kian2016}, and other works.  Uniqueness is known, for example for potentials supported in a cylinder as above, with a tempered growth in $t$, as shown in  \cite{MR1004174}.  One of the  techniques is to extract the light-ray transform from the data, which relies on forward scattering, and invert it, see, e.g., \cite{St-Yang-DN} for an even more general situation.  That transform does not see timelike singularities however, see \cite{S-support2014, LOSU-Light_Ray, SU-book} which makes it unstable. In view of that, the possibility of a \textit{stable} recovery of $q$ remained unclear. In \cite{Rakesh-time-dep-21}, it was shown that a similar boundary value problem, with inputs plane waves as below, and the output measured at a fixed time $t=T$ in the whole $\R^n_x$, provides Lipschitz stable recovery. The proof is based on Carleman estimates. 

Even though  forward propagating rays do not see all singularities,  broken rays reflecting from the interior could, at least on the principal level. Back-scattering provides such a geometry, in particular. The main reason why one can expect a stable recovery in this case is the following. Plane waves can only possibly detect singularities conormal to them, which are lightlike, indeed. On the other hand, a linearization of the backscattering data near $q=0$ is an integral over the product of one such incoming and one outgoing wave.  That product, on the principal level, is supported on the intersection of such two hyperplanes in timespace, which is a delta on a codimension two (vs. one) hyperplane, see Figure~\ref{time-dep-pot_pics1}, where it  looks like a line. That hyperplane has a richer subspace of conormals and can possibly detect non-necessarily lightlike singularities. Varying the incident direction of the incoming wave provides a complete set of conormals. We refer to the discussion in section~\ref{sec_idea} as well. 

We are restricted to small potentials, and as we pointed out already, even for stationary $q(x)$, the uniqueness without that assumption is a well-known open problem. It seems feasible that our methods could help  prove local generic uniqueness (and stability) in line with the stationary results in \cite{Eskin-Ralston_3D,Eskin-Ralston_2D, S-CPDE}.

\section{Main Results}
We describe the scattering amplitude for \r{eq1} briefly in order to formulate the main theorem. In Appendix~\ref{sec_appendix}, we review the scattering theory for \r{eq1} in more detail. 

We are sending waves 
$\delta(t+s-x\cdot\omega)$, where $s$ is a delay parameter, $|\omega|=1$,  and $t\ll 0$; let them propagate and scatter, and measure them at infinity at directions $\omega'$ and delay time $s'$. The \textit{scattering amplitude} $A^\sharp(s',\omega',s,\omega)$, see Definition~\ref{def_sc_a} and Proposition~\ref{pr_A}, measures the difference between the wave we sent and the scattered one. Taking $\omega' =-\omega$, we measure the response in the direction opposite of the incoming one. 
If we have two potentials, $q_1$ and $q_2$, we denote the corresponding quantities by the subscripts $1$ and $2$.
 
To state our main results, we introduce the change of variables
\begin{equation}\label{change_var}
        \sigma = \frac{s - s'}{2},
        \qquad
        \sigma' = \frac{s + s'}{2}.
\end{equation}
By $\tilde{A}_1^\sharp(\sigma',\sigma,\omega)$ and $\tilde{A}_2^\sharp(\sigma',\sigma,\omega)$, we denote the functions $A_1^\sharp(s',-\omega,s,\omega)$ and $A_2^\sharp(s',-\omega,s,\omega)$ in the new variables. Our main result is the following. 
\begin{theorem}\label{thm_1}
There exists $\varepsilon>0$ and $k>0$ such that if
\begin{equation*}
    \|q_1\|_{C^k(\mathbb{R}\times\bar\Omega)} <\varepsilon,
    \qquad
    \|q_2\|_{C^k(\mathbb{R}\times\bar\Omega)}<\varepsilon,
\end{equation*}
then the identity $\tilde{A}_1^\sharp = \tilde{A}_2^\sharp$ implies $q_1=q_2$.
Moreover, under the same assumptions on $q_1$, $q_2$,  there exists a constant $C_\Omega>0$ such that
\[
\|q_1-q_2\|_{L^\infty(\R;\; L^2(\R^n))}\le C_\Omega \|\tilde{A}_1^\sharp-\tilde{A}_2^\sharp\|_{L^\infty(\mathbb{R}_{\sigma'}; L^2(\mathbb{S}_\omega^{n-1}; H^{(n-1)/2}(\mathbb{R}_\sigma)))}.
\]
\end{theorem}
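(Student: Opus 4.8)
\emph{Proof proposal.} The plan is to reduce Theorem~\ref{thm_1} to a linearization plus a perturbation argument: express the difference of the reduced scattering data as the linearization at $q=0$ applied to $q_1-q_2$, plus a remainder that is \emph{quadratically} small in the potentials; then invert the linearized map by a Radon inversion and absorb the remainder using the smallness hypothesis. Note first that the stability estimate, once proved, contains the uniqueness statement as the special case $\tilde A_1^\sharp=\tilde A_2^\sharp$, so it suffices to prove the inequality.

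\emph{Step 1 (integral identity and expansion).} Using the scattering theory of Appendix~\ref{sec_appendix}, in particular Proposition~\ref{pr_A} and Definition~\ref{def_sc_a}, I would first derive an Alessandrini-type bilinear identity representing $A_1^\sharp(s',-\omega,s,\omega)-A_2^\sharp(s',-\omega,s,\omega)$ as an integral over $\R\times\Omega$ of $(q_1-q_2)$ against the product $w_1\,\tilde w_2$, where $w_1$ solves the $q_1$–equation \eqref{eq1} with incident wave $\delta(t+s-x\cdot\omega)$ and $\tilde w_2$ solves the $q_2$–equation adapted to the measurement channel $(s',-\omega)$. For $\|q_j\|_{C^k}<\varepsilon$ one then writes $w_1=w_1^0+w_1^{\mathrm{sc}}$ and $\tilde w_2=\tilde w_2^0+\tilde w_2^{\mathrm{sc}}$, where the superscript $0$ denotes the free plane wave and the corrections are given by the (convergent, for small $q$) scattering series. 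The contribution of the two free waves is the linearization $L(q_1-q_2)$, and everything else is a remainder $\mathcal E$ carrying at least one factor $w_1^{\mathrm{sc}}$ or $\tilde w_2^{\mathrm{sc}}$.

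\emph{Step 2 (identifying $L$ with a slicewise Radon transform).} On the principal level the product of the two free plane waves is a delta density on the intersection of the two characteristic hyperplanes attached to $(s,\omega)$ and $(s',-\omega)$; this intersection is a codimension‑two affine plane lying inside a constant‑time slice (Figure~\ref{time-dep-pot_pics1}), as discussed in Section~\ref{sec_idea}. Carrying this through and passing to the variables \eqref{change_var}, $L(q_1-q_2)$ is, up to an explicit constant, the Euclidean Radon transform in $\R^n$ of the time slice of $q_1-q_2$, the time and the offset being the two variables $\sigma,\sigma'$ and the direction $\omega$; as these parameters range, one obtains \emph{all} hyperplane integrals of every time slice. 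The decisive point — which is what distinguishes back-scattering from the light-ray transform and underlies stability — is that the conormals of these planes fill all of $T^*(\R\times\R^n)$, timelike covectors included, so $L$ loses no singularities.

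\emph{Step 3 (inversion of $L$).} For each fixed time slice I invert the Radon transform by the explicit filtered back-projection formula for $n\ge 3$ odd, equivalently by the Fourier slice theorem together with Plancherel in $\R^n_x$ in polar coordinates; this is exactly the step producing the gain of $(n-1)/2$ derivatives in the offset variable, while the supremum runs over the time variable. Bookkeeping the change of variables \eqref{change_var} then yields
\[
\|q_1-q_2\|_{L^\infty(\R;\,L^2(\R^n))}\le C_\Omega\,\big\|L(q_1-q_2)\big\|_{L^\infty(\R_{\sigma'};\,L^2(\mathbb{S}^{n-1}_\omega;\,H^{(n-1)/2}(\R_\sigma)))} .
\]

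\emph{Step 4 (closing the argument).} It remains to estimate $\mathcal E$. Using the energy estimates, finite speed of propagation, and the regularity gain available for \eqref{eq1} when $n\ge 3$ is odd — all from Appendix~\ref{sec_appendix} — one shows, for $k$ large enough,
\[
\|\mathcal E\|_{L^\infty(\R_{\sigma'};\,L^2(\mathbb{S}^{n-1}_\omega;\,H^{(n-1)/2}(\R_\sigma)))}\le C\big(\|q_1\|_{C^k}+\|q_2\|_{C^k}\big)\,\|q_1-q_2\|_{L^\infty(\R;\,L^2(\R^n))} .
\]
Combining this with Steps~1–3 gives $\|q_1-q_2\|_{L^\infty(\R;L^2(\R^n))}\le C_\Omega\|\tilde A_1^\sharp-\tilde A_2^\sharp\|_{(\ast)}+2C_\Omega C\varepsilon\,\|q_1-q_2\|_{L^\infty(\R;L^2(\R^n))}$, where $(\ast)$ is the norm in the theorem; choosing $\varepsilon$ with $2C_\Omega C\varepsilon<1$ lets us absorb the last term, which proves the estimate, and with it the uniqueness.

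\emph{Main obstacle.} The crux is Step~4: the \emph{uniform in $(s,s',\omega)$} estimate of the nonlinear remainder $\mathcal E$ in the anisotropic norm $(\ast)$. This is where the smallness $\varepsilon$ and the precise regularity index $k$ are consumed, and it rests entirely on the mapping properties of the resolvent and of the perturbed incident/measurement solutions established in the appendix; keeping the scattered corrections controlled in a norm strong enough to feed back into the Radon inversion is the delicate part. A secondary technical point is the derivative bookkeeping in Step~3 in the coordinates \eqref{change_var}, which is what forces the particular combination of an $L^\infty$ norm in one of $\sigma,\sigma'$ and an $H^{(n-1)/2}$ norm in the other.
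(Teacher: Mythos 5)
Your proposal follows essentially the same route as the paper: the exact bilinear (pseudo-linearization) identity of Proposition~\ref{pr1}, the splitting of each scattering solution into free plane wave plus scattered part so that the free–free term is a slicewise Radon transform inverted via the standard two-sided stability estimate, and the absorption of the remaining terms, which are $O(\varepsilon)\|q_1-q_2\|_{L^\infty(\R;L^2)}$ by the progressive wave expansion, energy estimates, and a weighted Radon transform bound. The main obstacle you flag (the uniform estimate of the remainder in the anisotropic norm) is indeed where the paper spends most of its effort, via Lemmas~\ref{M_10_interms_aR}--\ref{est_M11} and Theorem~\ref{w_Radon_t_est}.
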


Note that we could use other norms above using complex interpolation under the assumptions of the theorem but the price for that is to make the estimate of conditional H\"older type, i.e., to have $\|A_1^\sharp-A_2^\sharp\|^\mu$ above with some $\mu\in (0,1)$. 


\section{Proofs}\label{sec_3}
\subsection{A pseudo-linearization identity} 
We review the scattering theory for time-dependent potentials in Appendix~\ref{sec_appendix} mostly following \cite{CooperS,CooperS84,MR1004174} with some additions as well. We sketch the main notions below. 

We send a plane wave $\delta(t+s-x\cdot\omega)$ to the perturbation, and let it interact with the potential. More precisely, we are solving
\be{TD_14}
(\partial_t^2-\Delta+q(t,x))u^-=0,   \quad u^-|_{t<-s-\rho }= \delta(t+s-x\cdot\omega).
\ee
Then we set 
\be{TD_14a}
u_\textrm{sc}^-= u^--\delta(t+s-x\cdot\omega). 
\ee
The distribution $u_\textrm{sc}^-$ (which is actually a function, see Proposition~\ref{pr2}) would be automatically outgoing by   Definition~\ref{def_out}, since it vanishes for $t\ll0$. Then we could compute the asymptotic wave profile $u^{-,\sharp}_\textrm{sc} (s',\omega';s,\omega)$ of $u_\textrm{sc}^- (t,x;s,\omega)$,   which would give us the analog of the scattering amplitude, see section~\ref{sec_OS}. As in the stationary case, we expect this to be ``essentially'' the kernel of the scattering operator minus identity. This is true, indeed, at least when the scattering operator exists as a bounded one as we show in Theorem~\ref{TD_thm_sc1}. One defines the scattering amplitude $A^\sharp(s',\omega';s,\omega)$ by canceling some constant and ignoring some   $s'$ derivatives. 

We need to define the time-reversed  analog of $u^-$ above, which we will denote by $u^\textrm{+} (t ,x;s,\omega)$. It solves
\be{TD_14in}
(\partial_t^2-\Delta+q(t,x))u^\textrm{+} =0,   \quad u^\textrm{+} |_{t>-s+\rho }= \delta(t+s-x\cdot\omega).
\ee
We want to warn the reader about a possible confusion caused by the terms incoming/outgoing. The solution $u$ of \r{TD_14}, which we denote by $u^-$ below, is the response to an incident plane wave and it is neither incoming nor outgoing by  Definition~\ref{def_out}. On the other hand, $u_\textrm{sc}^-=u^-_\textrm{sc} $ is outgoing. Similarly, $u^+$ is neither but $u^+_\textrm{sc}$, defined as in \r{TD_14a} but with $u$ replaced by $u^+$, is incoming.

Let $q_1$ and $q_2$ be two such potentials, and denote the corresponding quantities with subscripts $1$ and $2$. We have the following formula, proven also in  \cite{MR1082237} for $n=3$,  generalizing that in \cite{S-CPDE}, where the potentials are time-independent. 
\begin{proposition}\label{pr1}
We have 
\be{pr1-est}
(A_1^\sharp - A_2^\sharp )(s',\omega';s,\omega) = \int (q_1-q_2) (t ,x )u_1^-(t ,x,s,\omega)u_2^+(t,x, s',\omega' )\, \d t\,\d x,
\ee
where $u_1^-$ solves \r{TD_14} with $q=q_1$, and  $u_2^+$ solves \r{TD_14in} with $q=q_2$.
\end{proposition}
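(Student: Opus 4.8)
The plan is to establish \r{pr1-est} as a "pseudo-linearization" identity in the same spirit as the stationary case in \cite{S-CPDE}, i.e.\ to write the difference of scattering amplitudes as a bilinear pairing of the potential difference against one solution for $q_1$ and one time-reversed solution for $q_2$. First I would set $q=q_1-q_2$ and let $u_1^-$ solve \r{TD_14} with $q_1$, while $u_2^+$ solves \r{TD_14in} with $q_2$. The natural object is the function $v := u_1^- - u_2^-$ (or, dually, a pairing of $u_1^-$ with $u_2^+$). Writing the equation satisfied by $u_1^-$ and the equation satisfied by $u_2^-$ and subtracting, one gets
\[
(\partial_t^2-\Delta)(u_1^- - u_2^-) = -q_1 u_1^- + q_2 u_2^- = -(q_1-q_2)u_1^- - q_2(u_1^- - u_2^-),
\]
so $(\partial_t^2-\Delta+q_2)(u_1^--u_2^-) = -(q_1-q_2)u_1^-$, with vanishing data for $t\ll 0$.

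Next I would pair this against $u_2^+$. The point is that $u_2^+$ solves the adjoint (which for the self-adjoint operator $\partial_t^2-\Delta+q_2$ is the same operator) with data prescribed at $t\gg 0$. Formally, integrating by parts over $\R\times\R^n$,
\[
\int (q_1-q_2)\,u_1^-\,u_2^+\,\d t\,\d x = -\int \big[(\partial_t^2-\Delta+q_2)(u_1^--u_2^-)\big]\,u_2^+\,\d t\,\d x = -\int (u_1^--u_2^-)\,(\partial_t^2-\Delta+q_2)u_2^+\,\d t\,\d x + \text{boundary terms},
\]
and since $(\partial_t^2-\Delta+q_2)u_2^+=0$, only the boundary terms at $t\to\pm\infty$ survive. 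The boundary term at $t\to-\infty$ vanishes because $u_1^--u_2^-$ vanishes there; the boundary term at $t\to+\infty$ should reproduce exactly $(A_1^\sharp-A_2^\sharp)(s',\omega';s,\omega)$, once one substitutes the prescribed behavior $u_2^+|_{t\gg0} = \delta(t+s'-x\cdot\omega')$ and reads off the asymptotic wave profile of $u_1^- - u_2^-$ in the outgoing region. Here I would lean on the scattering-theory setup recalled in Appendix~\ref{sec_appendix}: the asymptotic profile of $u_{1,\mathrm{sc}}^- - u_{2,\mathrm{sc}}^-$ in direction $\omega'$ with delay $s'$ is precisely $A_1^\sharp - A_2^\sharp$ (the scattered free waves $\delta(t+s-x\cdot\omega)$ cancel), and the pairing of an outgoing solution's radiation field with an incoming free wave $\delta(t+s'-x\cdot\omega')$ computes that profile by a Green's-formula / translation-representation argument (cf.\ Lax–Phillips, as in \cite{CooperS,CooperS84,MR1004174}).

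I expect the main obstacle to be making the integration by parts and the boundary-term identification rigorous, since $u_1^-$, $u_2^+$ are distributions (conormal to the light cones $t+s=x\cdot\omega$, $t+s'=-x\cdot\omega'$) and not classical functions, and the domain $\R\times\R^n$ is noncompact in all variables, so one must control behavior at spatial and temporal infinity. The resolution is to use that $u_{\mathrm{sc}}^\pm$ are actually functions (Proposition~\ref{pr2}) with finite speed of propagation and support properties inherited from the cylinder $\R\times\Omega$, truncate to a large box $[-T,T]\times B(0,R)$ with $R\gg T$, let $R\to\infty$ first (killing spatial boundary terms by finite speed of propagation) and then $T\to\infty$ (the $t=-T$ term is zero, the $t=T$ term converges to the radiation-field pairing that defines $A_1^\sharp-A_2^\sharp$). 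A secondary technical point is justifying that the product $(q_1-q_2)u_1^- u_2^+$ is integrable: since $q_1-q_2$ is smooth and compactly supported in $x$ and the relevant conormal singularities of $u_1^-$ and $u_2^+$ lie on transversal hypersurfaces, their product is a well-defined distribution that pairs against the smooth compactly supported $q_1-q_2$; near-field regularity of $u_{\mathrm{sc}}^\pm$ from Proposition~\ref{pr2} does the rest.
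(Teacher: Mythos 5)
Your proposal is correct in outline, and it reaches \r{pr1-est} by a genuinely different (more hands-on, PDE-level) route than the paper. The paper works at the operator level: it applies the fundamental theorem of calculus to $F(\sigma)=U_1(t,\sigma)U_2(\sigma,s)$ to get the propagator-difference identity \r{pr1-2}, passes to the limit $s\to-\infty$ to bring in the wave operators via Theorem~\ref{TD_thm_sc1}(a), and then invokes Theorem~\ref{thm_TD_AWP2}, which says precisely that the asymptotic wave profile of an outgoing solution $\int_{-\infty}^t U(t,\sigma)\mathbf{p}(\sigma)\,\d\sigma$ is obtained by pairing the source $p$ against $u^+$. Your version replaces each of these steps by its PDE counterpart: the subtraction $(\partial_t^2-\Delta+q_2)(u_1^--u_2^-)=-(q_1-q_2)u_1^-$ is the scalar form of \r{pr1-2} (with the roles of the indices swapped, which is harmless by symmetry), and your Green's-identity pairing against $u_2^+$ together with the boundary-term identification at $t\to+\infty$ is an inline re-derivation of the content of Theorem~\ref{thm_TD_AWP2} in this special case. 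What the paper's route buys is that all the distributional and limiting issues are settled once in the appendix (Theorems~\ref{TD_thm_sc1} and \ref{thm_TD_AWP2}); what your route buys is self-containedness and transparency, at the cost of having to justify directly the integration by parts for conormal distributions and the convergence of the $t=T$ boundary term — issues you correctly identify and for which your proposed resolution (finite speed of propagation, truncation in $R$ then $T$, transversality of the two characteristic hypersurfaces away from forward scattering, and the function-valuedness of $u^\pm_{\rm sc}$ from Proposition~\ref{pr2}) is the right one.

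One small bookkeeping caveat: the $t\to+\infty$ boundary term does not literally "read off the asymptotic wave profile" of $u_{1,\rm sc}^--u_{2,\rm sc}^-$, since that profile equals $-c_n^-\partial_{s'}^{(n-1)/2}(A_1^\sharp-A_2^\sharp)$ rather than $A_1^\sharp-A_2^\sharp$ itself. The cleaner statement is that the symplectic pairing $\int_{\R^n}\bigl(v_t\,w-v\,w_t\bigr)\d x$ at $t=T$, with $w=\delta(t+s'-x\cdot\omega')$ the free plane wave that $u_2^+$ equals for $t>-s'+\rho$, equals $-\int_{t<T}(\Box v)\,w\,\d t\,\d x$, which by \r{TD_18_def} converges to $-(A_1^\sharp-A_2^\sharp)$ without any extra constants or $s'$-derivatives. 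With that adjustment your argument closes exactly.
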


\begin{proof}
Start with
\be{pr1-2}
U_1(t,s)-U_2(t,s) = \int_s^t U_1(t,\sigma)(Q_1(\sigma)-Q_2(\sigma))U_2(\sigma,s)\,\d \sigma,
\ee
which can be obtained by applying the Fundamental Theorem of Calculus to $F(\sigma)= U_1(t,\sigma) U_2(\sigma,s) $ in the interval $\sigma\in [s,t]$. 
Apply $U_0(-s)\mathbf{f}$ on the right-hand,  and take the (strong) limit   $s\to-\infty$ to get
\be{pr1-3}
U_1(t,0)\Omega_{1,-}\mathbf{f}  -U_2(t,0)\Omega_{2,-}\mathbf{f}  = \int_{-\infty}^t    U_1(t,\sigma)(Q_1(\sigma)-Q_2(\sigma)) \left[U_2(\sigma,0) \Omega_{2,-}\mathbf{f}\right] \,\d \sigma.
\ee
For the left-hand side, and for the expression in the square brackets we will apply Theorem~\ref{TD_thm_sc1}(a). We take the asymptotic wave profile of  \r{pr1-3}  next applying Theorem~\ref{thm_TD_AWP2}. Then \r{pr1-est} is just that expression, written as a composition of Schwartz kernels, eventually applied to $\mathcal{R} \mathbf{f}$. 
\end{proof}

\subsection{Progressive wave expansion}\label{Prog_wave_eq}
We have the following progressive wave expansion. Let $h=:h_0$ be the Heaviside function and set $h_j (\tau) = \tau^j/j!$ for $\tau>0$; $h_j (\tau) =0$ for $\tau\le 0$. 
\begin{proposition}[\cite{MR1004174}]   \label{pr2}
For each integer $N\ge0$ we have
\be{eq_pr2}
u(t,s,x,\omega)\sim \delta(t+s-x\cdot\omega) + \sum_{j=0}^N a_j(t,x,\omega) h_j(t+s-x\cdot\omega)+R_N(t,x,s,\omega)  ,
\ee
where 
\[
\begin{split}
a_0(t,x,\omega) &= -\frac12 \int_{-\infty}^0 q(t+\tau, x+\tau\omega)\,\d\tau,\\
a_j(t,x,\omega) &= -\frac12 \int_{-\infty}^0(\Box+q) a_{j-1}(t+\tau, x+\tau\omega, \omega)\, \d\tau , \quad j=1,\dots,N,
\end{split}
\]
and $R_N\in C(\R_t\times \R_s\times S_\omega^{n-1};\; H^{N+1}(\R^n_x))$.
\end{proposition}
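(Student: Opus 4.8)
The plan is to run the classical progressive-wave (geometric optics) construction along the null phase $\phi(t,x):=t+s-x\cdot\omega$, reading \r{eq_pr2} as an \emph{exact} identity that defines $R_N$, and then to control $R_N$ by one energy estimate. The key structural fact is that $\phi$ is a null \emph{linear} phase for $\Box=\partial_t^2-\Delta$: $\Box\phi=0$ and $(\partial_t\phi)^2-|\nabla_x\phi|^2=1-|\omega|^2=0$. This gives the product rule $\Box(a\,h(\phi))=(\Box a)\,h(\phi)+2(\mathcal{T}a)\,h'(\phi)$ for smooth $a$, where $\mathcal{T}:=\partial_t+\omega\cdot\nabla_x$ differentiates along the null rays $\tau\mapsto(t+\tau,x+\tau\omega)$; it persists in $\mathcal{D}'$ with $h$ replaced by $h_0$ (Heaviside) or $\delta$, using $h_0'=\delta$ and $h_j'=h_{j-1}$.

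I would then verify, by differentiating the integral formulas for the $a_j$ under the integral sign and using $\frac{\d}{\d\tau}g(t+\tau,x+\tau\omega)=(\mathcal{T}g)(t+\tau,x+\tau\omega)$ together with the spacetime support of $q$, that the $a_j$ solve the transport equations $2\mathcal{T}a_0=-q$ and $2\mathcal{T}a_j=-(\Box+q)a_{j-1}$ for $1\le j\le N$, and that each $a_j(t,\cdot,\omega)$ is smooth and supported in the half-cylinder $\bar\Omega+[0,\infty)\omega\subset\{x\cdot\omega>-\rho\}$. Setting $R_N:=u-\delta(\phi)-\sum_{j=0}^N a_j h_j(\phi)$ and applying $\Box+q$, the product rule together with the transport equations cancels the coefficients of $\delta(\phi),h_0(\phi),\dots,h_{N-1}(\phi)$ in turn, leaving
\[
(\Box+q)R_N=-\big[(\Box+q)a_N\big]\,h_N(\phi)=:g_N .
\]
Also $R_N\equiv0$ for $t<-s-\rho$: there $u=\delta(\phi)$, while on $\supp a_j$ one has $x\cdot\omega>-\rho$, hence $\phi<-\rho-(-\rho)=0$ and every $h_j(\phi)$ vanishes.

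Finally I would estimate $R_N$. For fixed $(t,s,\omega)$ the cutoff $h_N(\phi)$ vanishes unless $x\cdot\omega<t+s$, which together with $x\cdot\omega>-\rho$ on $\supp a_N$ confines the $x$-support of $g_N$ to a bounded set, locally uniformly in the parameters; this is exactly what upgrades the conclusion to $H^{N+1}(\R^n_x)$ rather than $H^{N+1}_{\mathrm{loc}}$, since the individual terms $a_j h_j(\phi)$ are only half-cylinder supported. Since $q\in C^\infty$, $(\Box+q)a_N$ is smooth, and $h_N\in H^{N+\frac12-\varepsilon}_{\mathrm{loc}}(\R)$ for every $\varepsilon>0$ (its $N$-th derivative is a Heaviside function), so $g_N\in C(\R_t\times\R_s\times S_\omega^{n-1};H^{N+\frac12-\varepsilon}(\R^n_x))$ with uniformly compact $x$-support on compact $t$-sets. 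The energy estimate for $\Box+q$ with vanishing Cauchy data at $t=-s-\rho$ — commute the equation with $(1-\Delta_x)^{(N+\frac12-\varepsilon)/2}$ and absorb $qR_N$ by Gr\"onwall — gains one derivative and yields $R_N\in C(\R_t\times\R_s\times S_\omega^{n-1};H^{N+\frac32-\varepsilon}(\R^n_x))\hookrightarrow C(\cdots;H^{N+1}(\R^n_x))$; continuity in $(s,\omega)$ follows from that of $g_N$ using finite speed of propagation.

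I expect the main (though largely technical) obstacle to be the sharp regularity accounting for $R_N$: one must see that inverting $\Box$ buys a full derivative while $h_N(\phi)$ already carries $N+\tfrac12-$ derivatives, so the two half-gains combine to put $R_N$ just past $H^{N+1}$, and one must simultaneously track supports so that $g_N$, and hence $R_N$ by finite speed, is genuinely compactly supported in $x$ for fixed parameters. The secondary point — that with the $a_j$ as given the singular coefficients $\delta(\phi),\dots,h_{N-1}(\phi)$ really cancel in $\mathcal{D}'$ — is immediate from the distributional form of the product rule once it is recorded.
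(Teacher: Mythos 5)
Your proposal is correct and follows exactly the route the paper intends: the paper's own "proof" is just the one-line remark that $R_N$ solves $(\Box+q)R_N=-[(\Box+q)a_N]h_N(t+s-x\cdot\omega)$ with $R_N=0$ for $t<-s-\rho$, deferring the rest to \cite{MR1004174}, and your transport-equation cancellation plus the support bookkeeping and the derivative-gaining energy estimate is precisely the standard argument that fills this in. The only cosmetic point is that your count gives the slightly stronger $R_N\in C(\R_t\times\R_s\times S^{n-1}_\omega;H^{N+3/2-\varepsilon}(\R^n_x))$, which of course embeds into the stated $H^{N+1}$.
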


The latter statement follows from the fact that $R_N$ solves 
\[
(\partial_t^2-\Delta+q)R_N= - [(\partial_t^2-\Delta+q )A_N]h_N, \quad R_N|_{t<-s-\rho}=0.
\]
We get the same expansion for $u^+$ but with a different remainder $R_N$.

\subsection{Sketch of the main idea}\label{sec_idea}
Using Proposition~\ref{pr1}, and keeping the most singular terms of $u_1^-$ and $u_2^+$ only, we get
\be{P1}
\begin{split}
\delta A^\sharp(s',\omega';s,\omega) & \sim \int \delta q(t,x) \delta(t+s-x\cdot\omega) \delta(t+s'-x\cdot\omega')\,\d t\,\d x, 
\end{split}
\ee
where $\delta A^\sharp$ and $\delta q$ are formal linearizations, while the other two deltas above are Dirac deltas. 

The product of the two deltas is a delta, with the coefficient $2(4-(1+\omega\cdot\omega')^2)^{-1/2}$ 
on the $n-1$ dimensional hyperplane (co-dimension $2$) given by the system
\be{4}
-t+x\cdot\omega=s, \quad -t +x\cdot\omega'=s'
\ee
with $s$, $s'$ parameters, assuming $\omega\not=\omega'$, i.e., staying away from the forward scattering directions. Its conormal bundle is the span of $(-1,\omega)$ and $(-1,\omega')$. Those are two lightlike covectors, and all future pointing lightlike covectors  look like this. Taking linear combinations, and varying $\omega$ and $\omega'$, we get all covectors. So we are really inverting the $k=(n-1)$ -- Radon transform in $\R^{1+n}$ (by Helgason's terminology \cite{Helgason-Radon}) over \textit{all} $k$-planes; and this is stably invertible. 
We must stay away from $\omega=\omega'$ though. 
On the other hand, the codimension two Radon transform is overdetermined, so we do not need all of them, and we can avoid the bad planes. Back-scattering only ($\omega=-\omega'$) is one case where this works. 

Consider the \textbf{back-scattering} ($\omega'=-\omega$) problem now. Then \r{4} reduces to
\be{5}
-t+x\cdot\omega=s, \quad -t -x\cdot\omega=s'
\ee
which implies
\be{6}
x\cdot\omega= (s-s')/2=\sigma, \quad t =   -(s+s')/2 = -\sigma'.
\ee
This is easy to visualize as two hyperplanes in time-space at angle 45 degrees with the $t$-axis each, intersecting at a right angle, see Figure~\ref{time-dep-pot_pics1} below. 
\begin{figure}[h!] 
  \centering
  \includegraphics[scale=1,page=1]{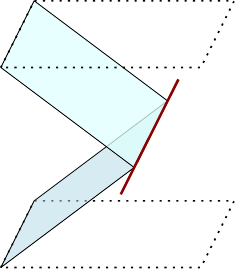}
\caption{\small  On the principal level, we integrate over the horizontal line in the middle, which is a codimension two hyperplane, actually. The support of the integrand, all terms considered, is inside the wedge, intersected with the cylinder $|x|\le\rho$.  
}
\label{time-dep-pot_pics1}
\end{figure}
Varying $\omega$, and $s, s'$ so that $s+s'$ is fixed, we get integrals over all ``lines'' (codimension two hyperplanes, actually) on the hyperplane $t=\text{const.}$, which is invertible, slice by slice. The stability estimate we get however treats $t$ and $x$ differently, in principle.

We need to analyze the lower order terms now. If we linearize near $q\not=0$, and take the lower order term $a_0$ for $u_1^-$ in \eqref{eq_pr2} into account only, then \r{P1} is the main term but we get the following additional terms:
\[
\begin{split}
B_1(s',\omega';s,\omega) &:= \int \delta q(t,x) a_1(t,x,\omega) h(t+s-x\cdot\omega) \delta(t+s'-x\cdot\omega')\,\d t\,\d x, 
\end{split}
\]
plus a similar $B_2$ term coming from $u_1^+$, plus an even more regular term containing two Heaviside functions $h$.  Each of  $B_1$, $B_2$ integrates $\delta q$ over a (weighted)  truncated delta on the hyperplane $t+s'-x\cdot\omega'=0$, or $t+s-x\cdot\omega=0$, respectively, see Figure~\ref{time-dep-pot_pics1}. In other words, the Schwartz kernels are deltas on half-hyperplanes. They can be thought of as a superposition of deltas on codimension two hyperplanes, as in \r{P1}, all parallel to the intersection in Figure~\ref{time-dep-pot_pics1}, moving along the corresponding wing of the wedge there. We just need upper bounds of $B_1$, $B_2$, and they can be done in the same norms as those we use for \r{P1} to get an $O(\varepsilon\|q\|)$ perturbation. The $\varepsilon$ gain comes from $a_1$. We can treat the more regular terms, coming from $a_j$, $j\ge1$, and from $R_N$ in Proposition~\ref{pr2} similarly, which is a tedious task but doable. An important observation is that the support of the product $u_1^- u_2^+$ in \r{pr1-est} is contained in the intersection of the half-space above the lower hyperplane (due to $u_1^-$, see \r{FS}) and the half-space below the upper hyperplane (due to $u_1^+$), i.e., on the left of that the wedge in the figure. This is further intersected with the cylinder $|x|\le\rho$, so in particular, we integrate in \r{pr1-est} over a compact set depending on the parameters.

\subsection{Proof of the main result}
Here, we will prove our main result. We begin with a few notations. Inspired by \r{pr1-est}, we set
\begin{equation*}
    q(t,x) = q_1(t,x) - q_2(t,x)
\end{equation*}
and
\be{2.1}
Mq (s', s,\omega) = \int q(t ,x )u_1^-(t ,x,s,\omega)u_2^+(t,x,s',-\omega )\, \d t\,\d x.
\ee
Writing $u_1^- (t ,x,s,\omega) = \delta(t+s- x\cdot\omega)+u_{1,\rm sc}^-$, $u_2(t ,x,s,-\omega) = \delta(t+s+ x\cdot\omega)+u_{2,\rm sc}^+$, we get
\be{2.2}
M = M_{00}+M_{01}+ M_{10}+ M_{11},
\ee
where

\begin{align}
M_{00}q(s',s,\omega) &= \int  q(t,x) \delta(t+s-x\cdot\omega) \delta(t+s'+x\cdot\omega)\,\d t\,\d x, \label{M0}\\
M_{10}q(s',s,\omega) &= \int  q(t,x)  u_{1,\rm sc}^- (t,x,s,\omega) \delta(t+s'+x\cdot\omega)\,\d t\,\d x,\label{M10}\\
M_{01}q(s',s,\omega) &= \int  q(t,x) \delta(t+s-x\cdot\omega) u_{2,\rm sc}^+ (t, x,s', -\omega)\,\d t\,\d x,\label{M01}\\
M_{11}q(s',s,\omega) &= \int  q(t,x) u_{1,\rm sc}^-(t,x,s,\omega) u_{2,\rm sc}^+(t,x, s', -\omega)\,\d t\,\d x. \label{M11}
\end{align}
We denote by $\tilde{M}q$, $\tilde{M}_{kl}q$ the above functions in the variables given by \eqref{change_var}. 

\begin{remark}\label{rem_Sigma_T}
    Let 
    \be{Sigma}
    \Sigma = [-\rho, \rho].
    \ee
    Note that if $\sigma \notin \Sigma$, then \eqref{6} does not hold for any $x\in \Omega$ and $\omega\in \mathbb{S}^{n-1}$, and hence, the line of intersection (a hyperplane) in Figure \ref{time-dep-pot_pics1} does not intersect the cylinder $\mathbb{R}\times \Omega$. Therefore, $\tilde{M}_{00}q(\sigma',\sigma,\omega)= 0$ for $\sigma \notin \Sigma$. If $\sigma\in \Sigma$, due to the definitions of $u_{1,sc}^+$ and $u_{2,sc}^+$, it follows that there exists $T>0$ such that the $(t,x)$-supports of the integrands in the definition of $\tilde{M}q$, $\tilde{M}_{kl}q$ belongs to $(-T - \sigma',T - \sigma)\times \Omega$.
\end{remark}

Next, we will study $\tilde{M}_{kl}q$. We begin with $\tilde{M}_{00}q$:

\begin{lemma}\label{est_M00}
    Let $\Sigma = [-\rho, \rho]$. There exists a constant $C>0$ depending only on $n$ such that
    \begin{equation}\label{est_M00_0}
        \|q\|_{L^\infty(\R;\; L^2(\R^n))} / C\leq \|\tilde{M}_{00}q\|_{L^\infty(\mathbb{R}_{\sigma'}; L^2(\mathbb{S}_\omega^{n-1}; H^{(n-1)/2}(\Sigma_\sigma)))} \leq C \|q\|_{L^\infty(\R;\; L^2(\R^n))}.
    \end{equation}
\end{lemma}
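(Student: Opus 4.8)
The plan is to reduce the estimate on $\tilde M_{00}q$ to a statement about the classical Radon (hyperplane) transform slice-by-slice in $t$. Writing out $M_{00}q(s',s,\omega)$ from \r{M0}, the two Dirac deltas $\delta(t+s-x\cdot\omega)\delta(t+s'+x\cdot\omega)$ are supported on the codimension-two set \r{6}, i.e. $t=-\sigma'$ and $x\cdot\omega=\sigma$. First I would compute the Jacobian factor: for fixed $\omega$, the map $(t,x)\mapsto(t+s-x\cdot\omega,\, t+s'+x\cdot\omega)$ restricted to the two-dimensional normal directions has Jacobian $2$ (consistent with the constant $2(4-(1+\omega\cdot\omega')^2)^{-1/2}$ mentioned earlier, evaluated at $\omega'=-\omega$). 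Thus, after the change of variables \r{change_var},
\[
\tilde M_{00}q(\sigma',\sigma,\omega) = \tfrac12 \int_{x\cdot\omega=\sigma} q(-\sigma',x)\,\d S(x) = \tfrac12\, (\mathcal{R}_{\sigma'} q)(\sigma,\omega),
\]
where $\mathcal{R}_{\sigma'}$ is the Radon transform in $x\in\R^n$ of the function $q(-\sigma',\cdot)$, a compactly supported function in $B(0,\rho)$ for each fixed $\sigma'$. This identity is the crux; everything after it is standard Radon-transform harmonic analysis applied with $t=-\sigma'$ as an inert parameter.

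Next I would invoke the well-known Sobolev mapping properties of the Radon transform on a fixed ball: for $f$ supported in $B(0,\rho)$,
\[
\|f\|_{L^2(\R^n)} \sim \|\mathcal{R} f\|_{L^2(\mathbb{S}^{n-1};\, H^{(n-1)/2}(\R))},
\]
with constants depending only on $n$ and $\rho$ — this is the standard $H^{(n-1)/2}$ smoothing estimate together with the filtered-backprojection (inversion) estimate; see e.g. Natterer or Helgason. Since for fixed $\sigma'$ the function $q(-\sigma',\cdot)$ is supported in $B(0,\rho)$, the $\sigma$-support of $\mathcal R f$ in the variable $\sigma$ lies in $\Sigma=[-\rho,\rho]$, so the $H^{(n-1)/2}(\R_\sigma)$ norm is equivalent to the $H^{(n-1)/2}(\Sigma_\sigma)$ norm on that support (one must be a little careful: restriction to an interval is bounded on $H^{(n-1)/2}$ when $(n-1)/2$ is not a half-integer causing trace issues, but since the function is supported in the interior of $\Sigma$ after choosing $\Sigma$ slightly larger if needed, or because $n$ is odd so $(n-1)/2$ is an integer, extension by zero is bounded $H^{(n-1)/2}(\Sigma)\to H^{(n-1)/2}(\R)$ and vice versa). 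Taking the $L^\infty$ norm over $\sigma'\in\R$ on both sides of the pointwise-in-$\sigma'$ equivalence then yields \r{est_M00_0}, absorbing the factor $\tfrac12$ and the Radon constants into $C$.

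So the key steps in order are: (1) evaluate the double-delta integral in \r{M0} and perform the change of variables \r{change_var}, identifying $\tilde M_{00}q$ with $\tfrac12$ times the $x$-Radon transform of $q(-\sigma',\cdot)$; (2) record the support property forcing $\sigma\in\Sigma$ and justify passing between $H^{(n-1)/2}(\R_\sigma)$ and $H^{(n-1)/2}(\Sigma_\sigma)$; (3) apply the two-sided Sobolev estimate for the Radon transform on $B(0,\rho)$, with constants depending only on $n$ (and $\rho$, which is fixed); (4) take $L^\infty$ in $\sigma'$ to conclude. I expect step (1) — pinning down the Jacobian and making the distributional computation of the product of two transversal deltas rigorous (it is a delta on the codimension-two plane with an explicit density, valid since $\omega\ne-(-\omega)$ is automatic here, the two covectors $(-1,\omega)$ and $(-1,-\omega)$ being linearly independent) — to be the only real subtlety; once the Radon identity is in hand, step (3) is a citation. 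A minor point to handle with care in step (2) is that the equivalence in step (3) is genuinely two-sided only because $q(-\sigma',\cdot)$ is supported in a fixed ball, so the lower bound $\|q\|/C \le \|\tilde M_{00}q\|$ uses the Radon inversion (filtered backprojection) estimate rather than just the forward mapping bound.
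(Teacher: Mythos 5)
Your proposal follows essentially the same route as the paper's proof: rewrite the product of the two deltas so that $\tilde M_{00}q(\sigma',\sigma,\omega)$ becomes the Radon transform of the time slice $q(-\sigma',\cdot)$ evaluated at $(\sigma,\omega)$, apply the standard two-sided $H^{(n-1)/2}$ stability estimate for the Radon transform of functions supported in a fixed ball (Natterer), use the support property to identify the $H^{(n-1)/2}(\Sigma_\sigma)$ and $H^{(n-1)/2}(\R_\sigma)$ norms, and take the supremum over $\sigma'$. Your Jacobian factor of $\tfrac12$ is in fact the correct normalization of the double-delta product at $\omega'=-\omega$ (the paper's displayed identity omits it), but this is harmless since it is absorbed into the constant $C$.
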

\begin{proof}
    As explained in Section \ref{sec_idea}, the product of the deltas in \r{M0} can also be written as 
    \begin{equation*}
        \delta\left(t+ \frac{s+s'}{2}\right)\delta\left(\frac{s-s'}{2} - x\cdot\omega\right).
    \end{equation*}
    Then
\[
M_{00}q(s',s,\omega) = \left[Rq(-(s'+s)/2, \cdot)\right] ((s-s')/2,\omega),
\]
where $Rf(p,\omega)= \int\delta(p-x\cdot\omega) f(x)\,\d x$ is the Radon transform of $f$. Using the change of variables given by \eqref{change_var}, we rewrite
\[
\tilde M_{00}q(\sigma',\sigma,\omega) = \left[Rq(-\sigma', \cdot)\right] (\sigma,\omega).
\]
For every $s$, we have the following stability estimate for the Radon transform, see \cite[Ch.~2]{Natterer-book}:
\[
\|f\|_{H^s(\R^n)}/C \le \|Rf\|_{L^2(\mathbb{S}_\omega^{n-1}; H^{s + (n-1)/2}(\mathbb{R}_\sigma))} \le C\|f\|_{H^s(\R^n)},
\]
for some constant $C>0$. Therefore, choosing $s=0$, we obtain
\be{2.3a}
\|q(-\sigma',\cdot)\|_{L^2(\R^n)}/C \le \|\tilde M_{00} q(\sigma',\cdot,\cdot)\|_{L^2(\mathbb{S}_\omega^{n-1}; H^{(n-1)/2}(\mathbb{R}_\sigma))} \le C\|q(-\sigma',\cdot)\|_{L^2(\R^n)}
\ee
for every $\sigma'$ with $C$ independent of it. Finally, as we noted in Remark~\ref{rem_Sigma_T},
\begin{equation*}
    \|\tilde{M}_{00}q\|_{L^\infty(\mathbb{R}_{\sigma'}; L^2(\mathbb{S}_\omega^{n-1}; H^{(n-1)/2}(\Sigma_\sigma)))} = \|\tilde{M}_{00}q\|_{L^\infty(\mathbb{R}_{\sigma'}; L^2(\mathbb{S}_\omega^{n-1}; H^{(n-1)/2}(\mathbb{R}_\sigma)))},
\end{equation*}
and hence, \eqref{est_M00_0} holds. 
\end{proof}

We need the next lemmas to obtain similar results for $\tilde{M}_{10}q$ and $\tilde{M}_{01}q$.

\begin{lemma}\label{M_10_interms_aR}
    Let $\Sigma$ and $T$ be as in Remark~\ref{rem_Sigma_T}.  Let $a_{1,j}$, $h_j$, and $R_{1,N}$ be the functions introduced in Section \ref{Prog_wave_eq} with $q=q_1$ and let $\bar{R}_{1,N}$ be the function such that
    \begin{equation*}
        \bar{R}_{1,N} (t,x, t + s - x\cdot\omega,\omega) = R_{1,N}(t,x,s,\omega).
    \end{equation*}
    Then, there exists $C_{\Omega,N}>0$ such that
     \begin{multline*}
    \|\tilde{M}_{10}q\|_{L^\infty(\mathbb{R}_{\sigma'};L^2(\mathbb{S}_\omega^{n-1}; H^{(n-1)/2}(\Sigma_\sigma)))} \leq C_{\Omega,N} \|q\|_{L^\infty(\mathbb{R}; L^2(\mathbb{R}^n))}\times\\       
    \times\left(\sum_{j=0}^N  \|a_{1,j} \|_{L^\infty(\mathbb{R}\times \mathbb{S}^{n-1};C^{2n-2}(\bar\Omega))} 
        + \sup_{\sigma'\in \mathbb{R}}\int_{-T-\sigma'}^{T-\sigma'} \sup_{\omega \in \mathbb{S}^{n-1}}\|\bar{R}_{1,N} (t,\cdot, 2t + 2\sigma',\omega)\|_{C^{2n-2}(\bar\Omega)}d t\right).
\end{multline*}
\end{lemma}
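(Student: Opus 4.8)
The plan is to substitute the progressive wave expansion of Proposition~\ref{pr2} into \eqref{M10}, integrate out the delta, and recognize $\tilde M_{10}q$ as a one--parameter superposition of weighted Radon transforms, each of which is controlled by the smoothing estimate already used in Lemma~\ref{est_M00}.

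\emph{An exact formula.} First I would write $u_{1,\mathrm{sc}}^- = \sum_{j=0}^N a_{1,j}(t,x,\omega)\,h_j(t+s-x\cdot\omega)+R_{1,N}(t,x,s,\omega)$ in \eqref{M10} and integrate in $t$ against $\delta(t+s'+x\cdot\omega)$; this pins $t=-s'-x\cdot\omega$, along which $t+s-x\cdot\omega=s-s'-2x\cdot\omega$, equal to $2(\sigma-x\cdot\omega)$ in the variables \eqref{change_var}. Slicing the $x$--integral by the hyperplanes $\{x\cdot\omega=\text{const}\}$ and substituting $\mu=\sigma-x\cdot\omega$, I expect to obtain, for $\sigma\in\Sigma$,
\be{M10form}
\tilde M_{10}q(\sigma',\sigma,\omega)=\int_0^{2\rho}\big[R\psi_{\mu,\sigma',\omega}\big](\sigma-\mu,\omega)\,\d\mu,\qquad
\psi_{\mu,\sigma',\omega}(x)=q(\mu-\sigma',x)\,w_{\mu,\sigma',\omega}(x),
\ee
with $R$ the Radon transform of Lemma~\ref{est_M00} and $w_{\mu,\sigma',\omega}(x)=\sum_{j=0}^N a_{1,j}(\mu-\sigma',x,\omega)\frac{(2\mu)^j}{j!}+\bar R_{1,N}(\mu-\sigma',x,2\mu,\omega)$. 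The whole point of slicing in exactly this way is that $\psi_{\mu,\sigma',\omega}$ does \emph{not} depend on $\sigma$: the variable $\sigma$ enters \eqref{M10form} only as the offset $\sigma-\mu$ at which $R\psi_{\mu,\sigma',\omega}$ is evaluated, so that the Radon smoothing in the offset variable becomes smoothing in $\sigma$. The $\mu$--range reduces to $[0,2\rho]$ because $u_{1,\mathrm{sc}}^-$ is supported in $\{t+s-x\cdot\omega\ge 0\}$ (by the expansion of Proposition~\ref{pr2} and finite speed of propagation for the remainder, so $h_j$ and $\bar R_{1,N}$ vanish on $\{\,\cdot\le0\}$), while $\supp q\subset\R\times\Omega\subset\R\times B(0,\rho)$ and $\sigma\in\Sigma$ force $\mu=\sigma-x\cdot\omega\in[0,\sigma+\rho]\subset[0,2\rho]$; this is the support information recorded in Remark~\ref{rem_Sigma_T}.

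\emph{The estimate.} Minkowski's integral inequality in $\mu$, together with translation invariance of $H^{(n-1)/2}(\R_\sigma)$ and $\Sigma\subset\R$, gives
\[
\big\|\tilde M_{10}q(\sigma',\cdot,\cdot)\big\|_{L^2(\mathbb{S}^{n-1}_\omega;H^{(n-1)/2}(\Sigma_\sigma))}\le\int_0^{2\rho}\big\|R\psi_{\mu,\sigma',\cdot}\big\|_{L^2(\mathbb{S}^{n-1}_\omega;H^{(n-1)/2}(\R_p))}\,\d\mu.
\]
To the inner norm I would apply a weighted version of the smoothing estimate from Lemma~\ref{est_M00}: for $f\in L^2(\Omega)$ and a weight $v\in C^{2n-2}(\bar\Omega\times\mathbb{S}^{n-1})$,
\be{wRadon}
\big\|R\big(f\,v(\cdot,\omega)\big)(\cdot,\omega)\big\|_{L^2(\mathbb{S}^{n-1}_\omega;H^{(n-1)/2}(\R_p))}\le C_n\,\|f\|_{L^2(\R^n)}\,\sup_{\omega\in\mathbb{S}^{n-1}}\|v(\cdot,\omega)\|_{C^{2n-2}(\bar\Omega)},
\ee
applied with $f=q(\mu-\sigma',\cdot)$ (so $\|f\|_{L^2}\le\|q\|_{L^\infty(\R;L^2(\R^n))}$) and $v=w_{\mu,\sigma',\cdot}$. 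Then $\sup_\omega\|w_{\mu,\sigma',\omega}\|_{C^{2n-2}(\bar\Omega)}\le\sum_{j=0}^N\frac{(2\mu)^j}{j!}\sup_\omega\|a_{1,j}(\mu-\sigma',\cdot,\omega)\|_{C^{2n-2}(\bar\Omega)}+\sup_\omega\|\bar R_{1,N}(\mu-\sigma',\cdot,2\mu,\omega)\|_{C^{2n-2}(\bar\Omega)}$, and integrating over the bounded interval $\mu\in[0,2\rho]$ turns the $a_{1,j}$--terms into $C_{\Omega,N}\sum_j\|a_{1,j}\|_{L^\infty(\R\times\mathbb{S}^{n-1};C^{2n-2}(\bar\Omega))}$; for the $\bar R_{1,N}$--term the substitution $t=\mu-\sigma'$ (so $2\mu=2t+2\sigma'$), followed by enlarging the interval to $(-T-\sigma',T-\sigma')$ with $T\ge2\rho$ and taking $\sup_{\sigma'}$, produces exactly the term in the statement. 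Folding all $\rho$-- and $N$--dependent constants into $C_{\Omega,N}$ finishes the proof.

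\emph{Main obstacle.} The crux is \eqref{wRadon}: I need the full $(n-1)/2$--derivative gain in $\sigma$ \emph{without} ever differentiating the rough factor $q$ --- so the gain must be supplied by the Radon transform itself --- and \emph{with} a weight $v=w_{\mu,\sigma',\omega}$ that genuinely depends on the integration direction $\omega$. The single--direction version of the estimate is false (a function oscillating in one direction is a counterexample), so the average over $\omega$ in the target norm is indispensable, which is precisely why $v$ must stay inside the $L^2_\omega$--norm and only its $x$--derivatives (no $\omega$--derivatives) may be used. I expect to prove \eqref{wRadon} by passing to the Fourier side, writing $\widehat{f\,v(\cdot,\omega)}(\tau\omega)=\big(\hat f*\widehat{v(\cdot,\omega)}\big)(\tau\omega)$, using $|\widehat{v(\cdot,\omega)}(\zeta)|\lesssim\langle\zeta\rangle^{-(2n-2)}\|v(\cdot,\omega)\|_{C^{2n-2}(\bar\Omega)}$ (compact support), and splitting the convolution by Cauchy--Schwarz with weight $\langle\zeta\rangle^{\pm(2n-2)}$; this reduces \eqref{wRadon} to the same bound on integrals of $\langle\tau\omega-\eta\rangle^{-(2n-2)}$ over lines in $\R^n$ that underlies the weight--free estimate in \cite[Ch.~2]{Natterer-book}. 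Making this quantitative, with a constant depending only on $n$, is the one genuinely technical point; the rest is bookkeeping.
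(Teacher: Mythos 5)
Your proof is correct and is essentially the paper's own argument: after the substitution $\mu=t+\sigma'$ your representation of $\tilde M_{10}q$ as a one-parameter superposition of translated weighted Radon transforms is exactly the paper's formula $\tilde M_{10}q(\sigma',\sigma,\omega)=\int_{-T-\sigma'}^{T-\sigma'}\bigl[R_{U(t,\cdot,2t+2\sigma',\cdot)}q(t,\cdot)\bigr](-t+\sigma-\sigma',\omega)\,\d t$, and the weighted smoothing estimate you flag as the main obstacle is precisely Theorem~\ref{w_Radon_t_est}, which the paper proves separately in the appendix and simply cites here, so you need not re-derive it.
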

\begin{proof}
    For the sake of brevity, we denote
    \begin{equation*}
        U(t,x, t + s - x\cdot\omega,\omega) = \sum_{j=0}^N a_{1,j}(t,x,\omega) h_j(t+s-x\cdot\omega) + \bar{R}_{1,N} (t,x, t + s - x\cdot\omega,\omega).
    \end{equation*}
    Then,
    \begin{align*}
        M_{10}q(s',s,\omega) &= \int_{\mathbb{R}^n} \int_{\mathbb{R}} q(t,x)  U (t,x, t + s - x\cdot\omega,\omega) \delta(t+s'+x\cdot\omega)\,\d t\,\d x\\
        & = \int_{\mathbb{R}} \int_{\omega^\perp} q(t,-(t + s') \omega + y)  U(t,-(t + s') \omega + y, 2t + s + s',\omega) \,\d t\,\d y.
    \end{align*}
    Using the change of variables given by \eqref{change_var}, for any $\sigma\in \Sigma$, we obtain
\begin{align*}
    \tilde{M}_{10}q(\sigma',\sigma,\omega) &
    = \int_{-T - \sigma'}^{T - \sigma'} \int_{\omega^\perp} q(t,-(t + \sigma' - \sigma) \omega + y)  U(t,-(t + \sigma' - \sigma) \omega + y, 2t + 2\sigma',\omega) \,\d t\,\d y\\&=\int_{-T - \sigma'}^{T - \sigma'} \left[R_{U(t,\cdot,2t + 2\sigma',\cdot)}q(t,\cdot)\right] (-t + \sigma- \sigma',\omega)\,\d t.
\end{align*}
By Theorem \ref{w_Radon_t_est},
\begin{multline*}
        \|\tilde{M}_{10}q(\sigma',\cdot,\cdot)\|_{L^2(\mathbb{S}_\omega^{n-1}; H^{(n-1)/2}(\Sigma_\sigma))}\\
        \leq C_\Omega \sup_{t\in [-T - \sigma',T-\sigma']}\|q(t,\cdot)\|_{L^2(\Omega)}  \int_{-T-\sigma'}^{T-\sigma'} \sup_{\omega \in \mathbb{S}^{n-1}}\|U (t,\cdot, 2t + 2\sigma',\omega)\|_{C^{2n-2}(\bar\Omega)}d t.
    \end{multline*}
    We estimate next
    \begin{multline}
        \int_{-T-\sigma'}^{T-\sigma'} \sup_{\omega \in \mathbb{S}^{n-1}}\|U (t,\cdot, 2t + 2\sigma',\omega)\|_{C^{2n-2}(\bar\Omega)}d t\leq C_N \int_{-T-\sigma'}^{T-\sigma'} \sup_{\omega \in \mathbb{S}^{n-1}}\|a (t,\cdot,\omega)\|_{C^{2n-2}(\bar\Omega)}d t \\
        + \int_{-T-\sigma'}^{T-\sigma'} \sup_{\omega \in \mathbb{S}^{n-1}}\|\bar{R}_{1,N} (t,\cdot, 2t + 2\sigma',\omega)\|_{C^{2n-2}(\bar\Omega)}d t.
    \end{multline}
    The last two estimates complete the proof.
\end{proof}

\begin{lemma}\label{energ_est}
    Let $\Omega\subset \mathbb{R}^n$ be a bounded set, $\alpha$ be a multi-index, and $K = |\alpha| +(n+1)/2$. Let $f\in C^N(\mathbb{R}_t\times \mathbb{R}_s\times \mathbb{R}^n_x)$ such that $f(t,s,\cdot)$ has a compact support. Assume that $N\in\mathbb{N}$ is large enough so that the equation
    \begin{equation*}
    \begin{cases}
        (\partial_t^2 - \Delta + q(t,x))v(t,s,x) = f(t,s,x),\\
        v\arrowvert_{t<-s-\rho} = 0.
    \end{cases}
    \end{equation*}
    has the unique smooth (as much as needed) solution $v$. Then
    \begin{equation}\label{energ_est_0}
        \|D_x^\alpha v\|_{L^\infty(\Omega)} \leq C_{\Omega,K} e^{C_q^K(t+s+\rho)} \int_{-s-\rho}^t \|f(\tau,s,\cdot)\|_{H^K(\mathbb{R}^n)}d\tau,
    \end{equation}
    where 
    \begin{equation*}
        C_q^K = 2 + C_K\|q(t,\cdot)\|_{C^k(\mathbb{R}^n)}
    \end{equation*}
    with some $C_K>0$ dependent on $K$. 
\end{lemma}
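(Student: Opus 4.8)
The plan is to derive \eqref{energ_est_0} from a standard energy estimate for the wave equation $(\partial_t^2-\Delta+q)v=f$ together with Sobolev embedding and finite speed of propagation. First I would observe that, because $v$ vanishes for $t<-s-\rho$ and waves propagate at unit speed, on the time interval $[-s-\rho,t]$ the solution is compactly supported in $x$ (uniformly, since $\Omega$ and $\supp f(\cdot,s,\cdot)$ are bounded), so all the $L^2(\R^n_x)$ norms below are finite and no boundary terms appear. Then I would run the standard $H^K$ energy identity: differentiate the equation in $x$ up to order $K$, pair with $\partial_t D_x^\beta v$, integrate by parts, and use that $q\in C^k$ with $k$ chosen large enough (at least $k\ge K$) so that the commutator $[D_x^\beta,q]v$ is controlled by lower-order $H^K$ norms of $v$. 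This yields, with $E_K(\tau)^2=\sum_{|\beta|\le K}\|\partial_t D_x^\beta v(\tau,s,\cdot)\|_{L^2}^2 + \|D_x^{\beta+1}v(\tau,s,\cdot)\|_{L^2}^2$, a Grönwall-type inequality
\[
\frac{d}{d\tau}E_K(\tau) \le \big(1 + C_K\|q(\tau,\cdot)\|_{C^k}\big) E_K(\tau) + \|f(\tau,s,\cdot)\|_{H^K(\R^n)},
\]
with zero initial data at $\tau=-s-\rho$.

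Second, integrating this differential inequality gives
\[
E_K(t) \le \int_{-s-\rho}^{t} e^{\int_\tau^t (1+C_K\|q(\theta,\cdot)\|_{C^k})\,d\theta}\,\|f(\tau,s,\cdot)\|_{H^K(\R^n)}\,d\tau,
\]
and bounding the exponent crudely by $e^{C_q^K(t+s+\rho)}$ with $C_q^K = 2 + C_K\sup_\tau\|q(\tau,\cdot)\|_{C^k}$ (the extra room in the constant $2$ versus $1$ absorbs both the $+1$ in the drift and the passage from $E_K$ to a controlled $H^{K+1}$-type quantity). Third, I would bound $\|D_x^\alpha v(t,s,\cdot)\|_{L^\infty(\Omega)}$ by $\|D_x^\alpha v(t,s,\cdot)\|_{L^\infty(\R^n)}$ and invoke the Sobolev embedding $H^{(n+1)/2+|\alpha|}(\R^n)\hookrightarrow C^{|\alpha|}$; since $K=|\alpha|+(n+1)/2$, the quantity $\|D_x^\alpha v(t,s,\cdot)\|_{L^\infty}$ is dominated by $E_K(t)$ (up to the constant $C_{\Omega,K}$), which combined with the previous display yields \eqref{energ_est_0}.

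The main obstacle is the bookkeeping in the energy estimate when $q$ is merely $C^k$ rather than smooth: one must check that $k$ can be fixed \emph{a priori} (depending only on $n$ and $|\alpha|$, hence on $K$) so that all commutators $[D_x^\beta, q]$ with $|\beta|\le K$ map $H^K$ into $L^2$ with norm $\lesssim \|q\|_{C^k}$, and that the iteration of the energy estimate in $j$ (through the progressive wave expansion, where this lemma will be applied to $v=u_{\mathrm{sc}}$ and its building blocks) does not force $k$ to grow; taking $k=K = |\alpha|+(n+1)/2$ with $|\alpha|$ at most the $2n-2$ appearing in Lemma~\ref{M_10_interms_aR} suffices. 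The growing exponential factor $e^{C_q^K(t+s+\rho)}$ is harmless here because, by Remark~\ref{rem_Sigma_T}, the relevant $t$-integration is over a compact interval of length $2T$ determined by $\rho$ and the support of the data, so the exponential contributes only a fixed constant $C_\Omega$ in the final estimates.
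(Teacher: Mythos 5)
Your proposal is correct and follows essentially the same route as the paper: a higher-order energy estimate for $(\partial_t^2-\Delta+q)v=f$ with zero data at $t=-s-\rho$, Gr\"onwall's inequality to produce the factor $e^{C_q^K(t+s+\rho)}$ and the time integral of $\|f(\tau,s,\cdot)\|_{H^K}$, followed by Sobolev embedding to pass to $\|D_x^\alpha v\|_{L^\infty(\Omega)}$ with $K=|\alpha|+(n+1)/2$. The only differences are cosmetic: you work with the square root of the energy and a linear differential inequality, while the paper uses the squared energy (augmented by $\|v\|_{L^2}^2$ so that the full $H^{K+1}$ norm is controlled without invoking Poincar\'e on the compact support), but both close in the same way.
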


\begin{proof}
    For non-negative $k\in \mathbb{Z}$, we define
    \begin{equation*}
        E_s^k(t) = \|\partial_tv(t,s,\cdot)\|_{H^k(\mathbb{R}^n)}^2 + \sum_{|\gamma|\leq k} \|\nabla_x D_x^\gamma v(t,s,\cdot)\|_{(L^2(\mathbb{R}^2))^n}^2 + \|v(t,s,\cdot)\|_{L^2(\mathbb{R}^2)}^2.
    \end{equation*}
    Then 
    \begin{multline*}
        \partial_t E_s^k(t) = 2 \sum_{|\gamma|\leq  k} \Re(\partial_t^2 D_x^\gamma v(t,s,\cdot),  \partial_t D_x^\gamma v(t,s,\cdot))_{L^2(\mathbb{R}^2)} \\
        - 2 \sum_{|\gamma|\leq  k} \Re(\Delta D_x^\gamma v(t,s,\cdot), \partial_t D_x^\gamma v(t,s,\cdot) )_{L^2(\mathbb{R}^2)} + 2\Re(\partial_t v(t,s,\cdot), v(t,s,\cdot))_{L^2(\mathbb{R}^2)},
    \end{multline*}
    where $\Re$ denotes the real part of the number. Since 
    \begin{equation*}
        (\partial_t^2 - \Delta)D_x^\gamma v(t,s,x) + D_x^\gamma (q(t,x)v(t,s,x)) = D_x^\gamma f(t,s,x),
    \end{equation*}
    the last identity becomes
    \begin{multline*}
        \partial_t E_s^k(t) = 2 \sum_{|\gamma|\leq  k} \Re(D_x^\gamma f(t,s,\cdot),  \partial_t D_x^\gamma v(t,s,\cdot))_{L^2(\mathbb{R}^2)}  \\
        - 2 \sum_{|\gamma|\leq  k}\Re(D_x^\gamma (q(t,\cdot) v(t,s,\cdot)), \partial_tD_x^\gamma v(t,s,\cdot) )_{L^2(\mathbb{R}^2)} + 2\Re(\partial_t v(t,s,\cdot), v(t,s,\cdot))_{L^2(\mathbb{R}^2)}.
    \end{multline*}
    Using the Cauchy–Schwarz inequality, we obtain
    \begin{equation*}
        \partial_t E_s^k(t) \leq \sum_{|\gamma|\leq  k} \|D_x^\gamma f(t,s,\cdot)\|_{L^2(\mathbb{R}^n)}^2 + (2 + C_k\|q(t,\cdot)\|_{C^k(\mathbb{R}^2)} ) E_s^k(t).
    \end{equation*}
    By integration over $(-s-\rho, t)$, we obtain
    \begin{equation*}
        E_s^k(t) \leq F_s^k(t) + C_q^k \int_{-s-\rho}^t E_s^k(\tau) d\tau,
    \end{equation*}
    where
    \begin{equation*}
        F_s^k(t) = \int_{-s-\rho}^t \|f(\tau,s,\cdot)\|_{H^k(\mathbb{R}^n)}d\tau,
        \qquad
        C_q^k = 2 + C_k\|q(t,\cdot)\|_{C^k(\mathbb{R}^n)}.
    \end{equation*}
    Then, the Gr\"{o}nwall's inequality implies
    \begin{equation*}
        E_s^k(t) \leq F_s^k(t) + C_q^k \int_{-s-\rho}^t F_s^k(\tau) e^{C_q^k(t - \tau)}d\tau.
    \end{equation*}
    Since $F_s^k$ is an increasing function,
    \begin{equation}\label{energ_est_1}
        E_s^k(t) \leq F_s^k(t)\left( 1 + C_q^k \int_{-s-\rho}^t e^{C_q^k(t - \tau)}d\tau\right) \leq F_s^k(t) e^{C_q^k(t+s+\rho)}.
    \end{equation}
    Due to the Sobolev inequality, it follows that 
    \begin{equation*}
        \|v(t,s,\cdot)\|_{C^{|\alpha|}(\bar\Omega)} \leq C_{\Omega,K} \|v(t,s,\cdot)\|_{H^{K+1}(\Omega)} \leq C_{\Omega,K} \|v(t,s,\cdot)\|_{H^{K+1}(\mathbb{R}^n)}.
    \end{equation*}
    Combining this with \eqref{energ_est_1}, we obtain \eqref{energ_est_0}.    
\end{proof}
\begin{lemma}\label{est_for_aR}
    Let $L = 2n - 2$. There exists a sufficiently large $N\in\mathbb{R}$ such that if
    \begin{equation}\label{est_for_aR_0}
        \|q_1\|_{C^{L + \frac{n-1}{2}+3+2N}(\mathbb{R}\times\bar\Omega)}<1.
    \end{equation}
    Then,
    \begin{equation*}
        \sum_{j=0}^N  \|a_{1,j} \|_{L^\infty(\mathbb{R}\times \mathbb{S}^{n-1};C^{2n-2}(\bar\Omega))} \leq C_{\Omega,N} \|q_1\|_{C^{L + 2N}(\mathbb{R}\times\bar\Omega)}
    \end{equation*}
    and
    \begin{equation*}
        \sup_{\sigma'\in \mathbb{R}}\int_{-T-\sigma'}^{T-\sigma'} \sup_{\omega \in \mathbb{S}^{n-1}}\|\bar{R}_{1,N} (t,\cdot, 2t + 2\sigma',\omega)\|_{C^{L} (\bar\Omega)}d t \leq C_{\Omega,N} \|q_1\|_{C^{L + \frac{n-1}{2}+3+2N}(\mathbb{R}\times \bar \Omega)}.
    \end{equation*}
\end{lemma}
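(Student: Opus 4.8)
\emph{Bound on the $a_{1,j}$.} The plan is an induction on $j$ directly from the formulas in Section~\ref{Prog_wave_eq}. Because $q_1$ is supported in $\mathbb R\times\Omega$ with $\Omega\subset B(0,\rho)$, the integrand of $a_{1,0}(t,x,\omega)=-\frac12\int_{-\infty}^0 q_1(t+\tau,x+\tau\omega)\,\d\tau$ vanishes unless $x+\tau\omega\in B(0,\rho)$, which for $x$ in a fixed ball restricts $\tau$ to an interval of length $\le 2\rho$; hence $\|a_{1,0}(\cdot,\cdot,\omega)\|_{C^m}\le C_\rho\|q_1\|_{C^m(\mathbb R\times\bar\Omega)}$ on that ball, uniformly in $\omega$. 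Inductively, the $x$-support of $a_{1,j-1}(\cdot,\cdot,\omega)$ lies in $B(0,\rho)+\mathbb R_{\ge 0}\omega$, so the $\tau$-integration in $a_{1,j}(t,x,\omega)=-\frac12\int_{-\infty}^0(\Box+q_1)a_{1,j-1}(t+\tau,x+\tau\omega,\omega)\,\d\tau$ is again effectively over a length-$2\rho$ interval for $x$ in a fixed ball. One pass through the recursion thus costs a factor $C_\rho$ from the $\tau$-integral, two lost $(t,x)$-derivatives from $\Box$, and a factor $(1+\|q_1\|_{C^m})$ from the zeroth-order term $q_1$. Iterating from $j$ down to $0$ on a ball containing $\bar\Omega$ yields, for $j\le N$,
\[
\|a_{1,j}\|_{L^\infty(\mathbb R\times\mathbb S^{n-1};C^{2n-2}(\bar\Omega))}\le C_{\Omega,N}\bigl(1+\|q_1\|_{C^{2n-2+2N}(\mathbb R\times\bar\Omega)}\bigr)^{N}\|q_1\|_{C^{2n-2+2N}(\mathbb R\times\bar\Omega)},
\]
and the same induction controls the $(t,x)$-joint derivatives of $a_{1,j}$ up to the same order on a fixed ball. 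Under \eqref{est_for_aR_0} the factor $(1+\|q_1\|)^{N}$ is at most $2^{N}$, absorbed into $C_{\Omega,N}$; summing over $j=0,\dots,N$ gives the first estimate.

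\emph{Bound on the remainder.} Recall from the remark following Proposition~\ref{pr2} that $R_{1,N}$ solves $(\partial_t^2-\Delta+q_1)R_{1,N}=-[(\Box+q_1)a_{1,N}]\,h_N(t+s-x\cdot\omega)$ with $R_{1,N}|_{t<-s-\rho}=0$. Differentiating in the parameter $s$, $\partial_s^\gamma R_{1,N}$ solves the same equation with $h_N$ replaced by $h_{N-\gamma}$ and the same vanishing condition. Since $\bar R_{1,N}(t,x,2t+2\sigma',\omega)=R_{1,N}(t,x,s,\omega)$ with $s=t+2\sigma'+x\cdot\omega$, the chain rule expresses $\|\bar R_{1,N}(t,\cdot,2t+2\sigma',\omega)\|_{C^{2n-2}(\bar\Omega)}$ as a finite sum of quantities $\|D_x^\beta\partial_s^\gamma R_{1,N}(t,\cdot,s,\omega)\|_{L^\infty(\Omega)}$ with $|\beta|+|\gamma|\le 2n-2$ and $s$ in the $\rho$-bounded interval $[t+2\sigma'-\rho,\,t+2\sigma'+\rho]$. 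I would estimate each of these by Lemma~\ref{energ_est}, applied with $\alpha=\beta$, $K=|\beta|+(n+1)/2$, and forcing $f=-[(\Box+q_1)a_{1,N}]\,h_{N-\gamma}(t+s-x\cdot\omega)$, choosing $N$ large enough that $N-\gamma\ge K$ for all admissible $\beta,\gamma$ (so no distributional derivatives of $h_{N-\gamma}$ occur) and that Proposition~\ref{pr2} and Lemma~\ref{energ_est} apply; $N>2(2n-2)+(n+1)/2$ suffices.

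Three observations make the application uniform. First, on the range $t\in[-T-\sigma',T-\sigma']$ of Remark~\ref{rem_Sigma_T} one has $0\le t+s+\rho\le 2T+2\rho$ wherever $R_{1,N}$ is nonzero (and $2t+2\sigma'$, the argument of $h_N$, is correspondingly bounded), while $C_{q_1}^K\le 2+C_K$ under \eqref{est_for_aR_0}; hence the prefactor $e^{C_{q_1}^K(t+s+\rho)}$ in \eqref{energ_est_0} is $\le C_{\Omega,N}$. Second, although $a_{1,N}(\cdot,\cdot,\omega)$ is supported only on the half-cylinder $B(0,\rho)+\mathbb R_{\ge 0}\omega$, the factor $h_{N-\gamma}(\tau+s-x\cdot\omega)$ confines the $x$-support of $f(\tau,s,\cdot)$ to the part with displacement $\le\tau+s+\rho\le 2T+2\rho$ along $\omega$, a fixed bounded set on which $|h_{N-\gamma}|\le(2T+2\rho)^{N-\gamma}/(N-\gamma)!$; thus $\|f(\tau,s,\cdot)\|_{H^K(\mathbb R^n)}\le C_{\Omega,N}(1+\|q_1\|_{C^K})\|a_{1,N}\|_{C^{K+2}}$, the last norm taken over that bounded set and jointly in $(t,x)$, which the first part controls by $C_{\Omega,N}\|q_1\|_{C^{K+2+2N}}$. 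Third, integrating in $\tau$ over the length-$(2T+2\rho)$ interval and using $K+2+2N=|\beta|+(n+1)/2+2+2N\le (2n-2)+(n-1)/2+3+2N$ bounds each term by $C_{\Omega,N}\|q_1\|_{C^{(2n-2)+(n-1)/2+3+2N}(\mathbb R\times\bar\Omega)}$; integrating in $t$ over $[-T-\sigma',T-\sigma']$ and taking suprema over $\sigma'$ and $\omega$ completes the proof.

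The step I expect to be the main obstacle is organizational rather than conceptual: confining every energy and Sobolev estimate to a genuinely bounded region in spite of the non-compact $x$-support of the $a_{1,j}$, and keeping the geometry of the moving evaluation point $s=t+2\sigma'+x\cdot\omega$ — together with all constants — uniform in the free parameters $\sigma'$ and $\omega$. All the required bounds are read off from the wedge picture of Figure~\ref{time-dep-pot_pics1} and the finite-speed statement of Remark~\ref{rem_Sigma_T}, but an argument that does not track these supports carefully would fail to close; likewise one must check that the choice of $N$ can be made to satisfy simultaneously all the (finitely many) lower bounds imposed above.
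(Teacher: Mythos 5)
Your proposal is correct and follows essentially the same route as the paper: the $a_{1,j}$ bound by iterating the recursion (losing two derivatives and a factor controlled by $\|q_1\|$ per step, with the $\tau$-integral effectively finite by the support of $q_1$), and the remainder bound by differentiating the equation for $R_{1,N}$ in $s$ (noting $h_N^{(|\beta|)}=h_{N-|\beta|}$), applying the energy estimate of Lemma~\ref{energ_est} to the compactly supported source $-[(\Box+q_1)a_{1,N}]h_N^{(|\beta|)}$, and keeping the exponential prefactor and supports uniform over the bounded ranges of $t$, $s$, $\sigma'$, $\omega$ — with the same derivative accounting $K+2+2N\le L+\frac{n-1}{2}+3+2N$. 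You are in fact more explicit than the paper on the first estimate, which it dismisses as immediate from the definition, and your requirement $N\ge\gamma+K$ matches the paper's implicit use of $h_N^{(|\beta|+k)}$, $k\le K$, being functions.
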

\begin{proof}
    The first estimate comes directly from the definition of $a_{1,j}$. It remains to show the second estimate. Set
    \begin{equation*}
        A = \sup_{\sigma'\in \mathbb{R}}\int_{-T-\sigma'}^{T-\sigma'} \sup_{\omega \in \mathbb{S}^{n-1}}\|\bar{R}_{1,N} (t,\cdot, 2t + 2\sigma',\omega)\|_{C^{L}(\bar\Omega)}d t.
    \end{equation*}
    We write
    \begin{align*}
        A &= \sup_{\sigma'\in \mathbb{R}}\int_{-T}^{T} \sup_{\omega \in \mathbb{S}^{n-1}}\|\bar{R}_{1,N} (t-\sigma',\cdot, 2t ,\omega)\|_{C^{L}(\bar\Omega)}d t \\
        & = \sup_{\sigma'\in \mathbb{R}}\int_{-T}^{T} \sup_{\omega \in \mathbb{S}^{n-1}} \sum_{|\gamma|\leq L}\sup_{x\in \Omega} \left|[D_x^\gamma\bar{R}_{1,N}](t-\sigma',x, 2t,\omega )\right|dt\\
        & = \sup_{\sigma'\in \mathbb{R}}\int_{-T}^{T} \sup_{\omega \in \mathbb{S}^{n-1}} \sum_{|\alpha|+|\beta|\leq L}\sup_{x\in \Omega} \left|[D_x^{\alpha}D_{s}^{|\beta|}R_{1,N}](t-\sigma',x, 2t - (t-\sigma') + x\cdot\omega,\omega)\right|dt.
    \end{align*}
    Then, we estimate 
    \begin{align}\label{est_for_A}
        \nonumber A & \leq \sum_{|\alpha|+|\beta|\leq L}2T\sup_{\sigma'\in \mathbb{R}} \sup_{\omega \in \mathbb{S}^{n-1}} \sup_{x\in \Omega} \sup_{t\in [-T,T]} \left|[D_x^{\alpha}D_{s}^{|\beta|}R_{1,N}](t-\sigma',x, 2t - (t-\sigma') + x\cdot\omega,\omega)\right|\\
        &\leq \sum_{|\alpha|+|\beta|\leq L}2T\sup_{t\in \mathbb{R}} \sup_{\omega \in \mathbb{S}^{n-1}} \sup_{x\in \Omega} \sup_{s\in [-2T,2T]} \left|[D_x^{\alpha}D_{s}^{|\beta|}R_{1,N}](t,x, s - t + x\cdot\omega,\omega)\right|.
    \end{align}
    To estimate the last term, let us fix multi-indexes $\alpha$, $\beta$ such that $|\alpha| + |\beta| = L$ and note that
    \begin{equation*}
        (\partial_t^2-\Delta+q)D_{s}^{|\beta|}R_N(t,x,s,\omega)= - [(\partial_t^2-\Delta+q )a_{1,N}](t,x,\omega)h_{N}^{(|\beta|)}(s + t - x\cdot\omega).
    \end{equation*}
    Moreover, by employing the derivative definition, it can be verified that
    \begin{equation*}
        D_{s}^{|\beta|}R_N(t,x,s,\omega) \arrowvert_{t < -s - \rho} = 0.
    \end{equation*}
    We denote
    \begin{equation}\label{A1N}
        A_{1,N}(\tau,x,\omega) = -[(\partial_t^2-\Delta+q )a_{1,N}](t,x,\omega),
    \end{equation}
    Next, we will show that 
    \begin{equation}\label{source}
        f(t,s,x,\omega) = A_{1,N}(\tau,x,\omega)h_{N}^{(|\beta|)}(s + t - x\cdot\omega)
    \end{equation}
    has a compact support as a function of the $x$ variable. In \cite{MR1004174}, it was shown that 
    \begin{equation*}
        a_{1,N}(t,x,\omega)  = 0, 
        \quad \text{for } x\cdot\omega <-\rho
        \text{ and for } |x - x\cdot\omega|>\rho.
    \end{equation*}
    Hence, if $x\cdot\omega\leq 0$, for sufficiently large $|x|$, it follows that $a_{1,N}(t,x,\omega)  = 0$. If $x\cdot\omega > 0$, then $h_{N}^{(|\beta|)}(s + t - x\cdot\omega) = 0$ for sufficiently large $|x|$. Therefore, for fixed $t$, $s$, and $\omega$, the function given by \eqref{source} is compactly supported. Therefore, by Lemma \ref{energ_est},
    \begin{multline*}
        \sup_{x\in \Omega}|D^\alpha_xD_{s}^{|\beta|}R_{1,N}(t,x,s,\omega) |\\
        \leq C_{\Omega,K} e^{C_q^K(t+s+\rho)} \int_{-s-\rho}^t \|A_{1,N}(\tau,\cdot,\omega)h_{N}^{(|\beta|)}(s + \tau - (\cdot)\cdot\omega)\|_{H^K(\mathbb{R}^n)}d\tau,
    \end{multline*}
    where
    \begin{equation}\label{def_K}
        K = |\alpha| + \frac{n-1}{2} +1,
        \qquad
        C_q^K = 2 + C_K\|q(t,\cdot)\|_{C^K(\mathbb{R}^n)}.
    \end{equation}
    Since $h_{N}^{(|\beta|+k)}$ is an increasing function for all $k=0,\cdots,K$, it follows that
    \begin{multline*}
        \sup_{x\in \Omega}|D^\alpha_xD_{s}^{|\beta|}R_{1,N}(t,x,s,\omega) | \leq  C_{\Omega,K} e^{C_q^K(t+s+\rho)} (t + s +\rho) 
        \\
        \times \sup_{\tau\in\mathbb{R}}\|A_{1,N}(\tau,\cdot,\omega)h_{N}^{(|\beta|)}(s + t - (\cdot)\cdot\omega)\|_{H^K(\mathbb{R}^n)}.
    \end{multline*}
    This is true for all $s\in \mathbb{R}$. Hence, if we choose $y\in \Omega$ and replace $s$ by $s - t +y\cdot\omega$, the last estimate becomes
    \begin{multline*}
        \sup_{x\in \Omega}|[D^\alpha_xD_{s}^{|\beta|}R_{1,N}](t,x,s - t +y\cdot\omega,\omega) | \leq  C_{\Omega,K} e^{C_q^K(s+y\cdot\omega+\rho)} (s+y\cdot\omega+\rho) 
        \\
        \times \sup_{\tau\in\mathbb{R}}\|A_{1,N}(\tau,\cdot,\omega)h_{N}^{(|\beta|)}(s + y\cdot\omega - (\cdot)\cdot\omega)\|_{H^K(\mathbb{R}^n)}.
    \end{multline*}
   Let
    \begin{equation*}
        z = \sup_{s\in [-2T,2T]} \sup_{y\in \Omega} \sup_{\omega\in \mathbb{S}^{n-1}} (s + y\cdot\omega).
    \end{equation*}
    Note that $z$ is constant depending only on $\Omega$. Then, from the last inequality, we derive
    \begin{multline}\label{3sup_est}
        \sup_{x,y\in \Omega}\sup_{t\in \mathbb{R}}\sup_{s\in [-2T,2T]}|[D^\alpha_xD_{s}^{|\beta|}R_{1,N}](t,x,s - t +y\cdot\omega,\omega) | \leq  C_{\Omega,K} e^{C_q^K(z+\rho)} (z+\rho) 
        \\
        \times \sup_{\tau\in\mathbb{R}}\|A_{1,N}(\tau,\cdot,\omega)h_{N}^{(|\beta|)}(z - (\cdot)\cdot\omega)\|_{H^K(\mathbb{R}^n)}.
    \end{multline}
    Moreover, we know that
    \begin{equation*}
        A_{1,N}(\tau,\cdot,\omega)h_{N}^{(|\beta|)}(z - (\cdot)\cdot\omega)
    \end{equation*}
    has a compact support with respect to $x$, which is uniformly bounded in $\tau \in \mathbb{R}$ and $\omega\in \mathbb{S}^{n-1}$, that is, there is a compact set $\tilde{\Omega}$  such that
    \begin{equation*}
        \supp A_{1,N}(\tau,\cdot,\omega)h_{N}^{(|\beta|)}(z - (\cdot)\cdot\omega) \subset \tilde{\Omega}
        \qquad
        \text{for all } \tau\in \mathbb{R} \text{ and }\omega\in \mathbb{S}^{n-1}.
    \end{equation*}
    The set $\tilde\Omega$ depends only on $\Omega$. Therefore,
    \begin{multline}\label{est_A1N_hN}
        \sup_{\tau\in\mathbb{R}}\|A_{1,N}(\tau,\cdot,\omega)h_{N}^{(|\beta|)}(z - (\cdot)\cdot\omega)\|_{H^K(\mathbb{R}^n)} \leq \sup_{\tau\in\mathbb{R}}\|A_{1,N}(\tau,\cdot,\omega)h_{N}^{(|\beta|)}(z - (\cdot)\cdot\omega)\|_{H^K(\tilde{\Omega})} \\
         \leq C_{\Omega,N} \sup_{\tau\in\mathbb{R}}\|A_{1,N}(\tau,\cdot,\omega)\|_{H^K(\tilde{\Omega})}.
    \end{multline}
    Therefore, since
    \begin{multline*}
        \sup_{x\in \Omega}\sup_{t\in \mathbb{R}}\sup_{s\in [-2T,2T]}|[D^\alpha_xD_{s}^{|\beta|}R_{1,N}](t,x,s - t +x\cdot\omega,\omega) |\\
        \leq \sup_{x,y\in \Omega}\sup_{t\in \mathbb{R}}\sup_{s\in [-2T,2T]}|[D^\alpha_xD_{s}^{|\beta|}R_{1,N}](t,x,s - t +y\cdot\omega,\omega) |,
    \end{multline*}
    from \eqref{3sup_est}, \eqref{def_K}, and \eqref{est_for_aR_0}, we obtain
    \begin{multline*}
        \sup_{x\in \Omega}\sup_{t\in \mathbb{R}}\sup_{s\in [-2T,2T]}\sup_{\omega\in \mathbb{S}^{n-1}}|[D^\alpha_xD_{s}^{|\beta|}R_{1,N}](t,x,s - t +x\cdot\omega,\omega) |\\
        \leq C_{\Omega,N} \sup_{\omega\in \mathbb{S}^{n-1}}\sup_{\tau\in\mathbb{R}}\|A_{1,N}(\tau,\cdot,\omega)\|_{C^{K}(\tilde{\Omega})}.
    \end{multline*}
    Since
    \begin{equation}\label{est_A1N}
        \sup_{\omega\in \mathbb{S}^{n-1}}\sup_{\tau\in\mathbb{R}}\|A_{1,N}(\tau,\cdot,\omega)\|_{C^K(\tilde{\Omega})} \leq C_{\Omega,N} \|q_1\|_{C^{K+2+2N}(\mathbb{R}\times\bar\Omega)},
    \end{equation}
    the previous estimate implies
    \begin{equation*}
        \sup_{x\in \Omega}\sup_{t\in \mathbb{R}}\sup_{s\in [-2T,2T]}\sup_{\omega\in \mathbb{S}^{n-1}}|[D^\alpha_xD_{s}^{|\beta|}R_{1,N}](t,x,s - t +x\cdot\omega,\omega) |
        \leq C_{\Omega,N} \|q_1\|_{C^{K+2+2N}(\mathbb{R}\times\bar\Omega)}.
    \end{equation*}
    Then, from \eqref{est_for_A}, it follows that
    \begin{equation*}
        A \leq C_{\Omega,N} \|q_1\|_{C^{L + \frac{n-1}{2}+3+2N}(\mathbb{R}\times\bar\Omega)}.
    \end{equation*}
    This completes the proof. 
\end{proof}

Now, we are ready to estimate $M_{10}q$. Similarly, the same holds for $M_{01}q$.
\begin{lemma}\label{est_M10}
    Let $\Sigma$ be as in \r{Sigma}. There exists a sufficiently large $k\in\mathbb{N}$ such that if
    \begin{equation}\label{est_M10_0}
        \|q_1\|_{C^{k}(\mathbb{R}\times \bar\Omega)} \leq \varepsilon < 1,
    \end{equation}
    then
    \begin{equation}\label{est_M10_1}
         \|\tilde{M}_{10}q\|_{L^\infty(\mathbb{R}_{\sigma'};L^2(\mathbb{S}_\omega^{n-1}; H^{(n-1)/2}(\Sigma_\sigma)))} \leq \varepsilon C_{\Omega} \|q\|_{L^\infty(\mathbb{R}; L^2(\mathbb{R}^n))}.
    \end{equation}
\end{lemma}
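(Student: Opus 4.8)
The plan is to chain together the three preceding lemmas. First I would choose $k$ large enough to satisfy simultaneously the hypotheses of Lemma \ref{est_for_aR} and to make Lemma \ref{M_10_interms_aR} applicable for the value of $N$ produced by Lemma \ref{est_for_aR}; concretely, take $N$ as in Lemma \ref{est_for_aR} and then set $k = L + \tfrac{n-1}{2} + 3 + 2N$ with $L = 2n-2$, so that the smallness assumption \eqref{est_M10_0} with $\varepsilon < 1$ implies both \eqref{est_for_aR_0} (since $\|q_1\|_{C^k} \le \varepsilon < 1$) and, a fortiori, the coarser norm bounds appearing in Lemma \ref{est_for_aR}'s conclusion.

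Next I would invoke Lemma \ref{M_10_interms_aR} to bound $\|\tilde M_{10}q\|_{L^\infty(\mathbb{R}_{\sigma'};L^2(\mathbb{S}^{n-1}_\omega;H^{(n-1)/2}(\Sigma_\sigma)))}$ by $C_{\Omega,N}\|q\|_{L^\infty(\mathbb{R};L^2(\mathbb{R}^n))}$ times the sum of the two structural quantities: $\sum_{j=0}^N \|a_{1,j}\|_{L^\infty(\mathbb{R}\times\mathbb{S}^{n-1};C^{2n-2}(\bar\Omega))}$ and the weighted integral of $\bar R_{1,N}$. Then Lemma \ref{est_for_aR} bounds each of these two quantities by $C_{\Omega,N}\|q_1\|_{C^{k}(\mathbb{R}\times\bar\Omega)}$ (the first by $\|q_1\|_{C^{L+2N}}$, the second by $\|q_1\|_{C^{L+\frac{n-1}{2}+3+2N}} = \|q_1\|_{C^k}$, and the former norm is dominated by the latter). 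Combining, the bracketed factor in Lemma \ref{M_10_interms_aR} is at most $C_{\Omega,N}\|q_1\|_{C^k(\mathbb{R}\times\bar\Omega)} \le C_{\Omega,N}\varepsilon$ by \eqref{est_M10_0}. Absorbing $C_{\Omega,N}$ into $C_\Omega$ (legitimate since $N$ is now a fixed function of $n$) yields \eqref{est_M10_1}.

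The only genuine subtlety — rather than the main obstacle, which was already dispatched in Lemmas \ref{energ_est} and \ref{est_for_aR} — is bookkeeping the exponential constant $e^{C_q^K(z+\rho)}$ from the energy estimate: one must note that under \eqref{est_M10_0} we have $C_q^K = 2 + C_K\|q\|_{C^K} \le 2 + C_K$, a bound independent of $q$, so the exponential factor is a harmless $\Omega$- and $n$-dependent constant and does not degrade the linear dependence on $\varepsilon$. The statement for $\tilde M_{01}q$ is identical after interchanging the roles of $u_1^-$ and $u_2^+$ and of $q_1$ and $q_2$ in the progressive wave expansion, which costs nothing since the hypothesis \eqref{est_M10_0} is symmetric under the substitution (and in Theorem \ref{thm_1} both $\|q_1\|_{C^k}$ and $\|q_2\|_{C^k}$ are assumed small).
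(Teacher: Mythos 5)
Your proposal is correct and follows essentially the same route as the paper: choose $N$ as required by Lemma~\ref{est_for_aR}, set $k = L + \frac{n-1}{2} + 3 + 2N$, and then combine Lemma~\ref{M_10_interms_aR} with the bounds of Lemma~\ref{est_for_aR} and the smallness assumption \eqref{est_M10_0}. The paper's proof is just a terser version of the same chaining, and your extra remarks on the uniform bound for the exponential constant and the symmetric treatment of $\tilde M_{01}q$ are consistent with what the paper leaves implicit.
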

\begin{proof}
    Depending on $\Omega$, we choose $N\in\mathbb{N}$ large enough so that the hypothesis of Lemma \ref{est_for_aR} is satisfied. Let $k=L + \frac{n-1}{2}+3+2N$. Then, Lemma \ref{est_for_aR} and \eqref{est_M10_0} give \eqref{est_M10_1}. 
\end{proof}

Next, we obtain a similar result for $M_{11}q$.
\begin{lemma}\label{est_M11}
    Let $\Sigma$ be as in \r{Sigma}. There exists a sufficiently large $k\in\mathbb{R}$ such that if
    \begin{equation}\label{est_M11_0}
        \|q_1\|_{C^{k}(\mathbb{R}\times \bar\Omega)} \leq \varepsilon < 1,
    \end{equation}
    then
    \begin{equation}\label{est_M11_1}
         \|\tilde{M}_{11}q\|_{L^\infty(\mathbb{R}_{\sigma'};L^2(\mathbb{S}_\omega^{n-1}; H^{(n-1)/2}(\Sigma_\sigma)))} \leq \varepsilon C_{\Omega} \|q\|_{L^\infty(\mathbb{R}; L^2(\mathbb{R}^n))}.
    \end{equation}
\end{lemma}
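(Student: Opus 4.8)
The plan is to treat $\tilde M_{11}q$ by decomposing \emph{both} scattered waves $u_{1,\mathrm{sc}}^-$ and $u_{2,\mathrm{sc}}^+$ via the progressive wave expansion of Proposition~\ref{pr2}, exactly as was done for the single scattered wave in the proof of Lemma~\ref{M_10_interms_aR}. Write $u_{1,\mathrm{sc}}^-(t,x,s,\omega) = \sum_{j=0}^N a_{1,j}(t,x,\omega)h_j(t+s-x\cdot\omega) + \bar R_{1,N}(t,x,t+s-x\cdot\omega,\omega)$ and similarly $u_{2,\mathrm{sc}}^+(t,x,s',-\omega) = \sum_{j=0}^N a_{2,j}(t,x,-\omega)h_j(t+s'+x\cdot\omega) + \bar R_{2,N}(t,x,t+s'+x\cdot\omega,-\omega)$. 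Substituting both into \eqref{M11}, the integrand becomes a finite sum of products of two factors of the type $a\cdot h$ or $\bar R$, each of which is \emph{continuous} and compactly supported in $x$ (by the same support considerations from \cite{MR1004174} used in Lemma~\ref{est_for_aR}). Crucially, unlike $M_{00}$, $M_{10}$, $M_{01}$, there is no Dirac mass left in \eqref{M11}: $\tilde M_{11}q(\sigma',\sigma,\omega)$ is literally an integral of $q$ against a bounded function supported, by Remark~\ref{rem_Sigma_T}, in the compact set $(-T-\sigma',T-\sigma')\times\Omega$.

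Concretely, after the change of variables \eqref{change_var} one gets, for $\sigma\in\Sigma$,
\[
\tilde M_{11}q(\sigma',\sigma,\omega) = \int_{-T-\sigma'}^{T-\sigma'}\int_{\mathbb{R}^n} q(t,x)\, G(t,x,\sigma,\sigma',\omega)\,\d x\,\d t,
\]
where $G$ is the product $u_{1,\mathrm{sc}}^-\cdot u_{2,\mathrm{sc}}^+$ re-expressed in the new variables, supported in $x\in\Omega$. By Cauchy--Schwarz in $x$ and then the trivial bound on the $t$-integral over an interval of length $2T$,
\[
|\tilde M_{11}q(\sigma',\sigma,\omega)| \le 2T \sup_{t}\|q(t,\cdot)\|_{L^2(\Omega)}\, \sup_{t,x,\omega}\|G(t,\cdot,\sigma,\sigma',\omega)\|_{L^\infty(\Omega)}.
\]
Since $\Sigma$ is a bounded interval, the $H^{(n-1)/2}(\Sigma_\sigma)$ norm is controlled by finitely many $\sigma$-derivatives in $L^\infty(\Sigma_\sigma)$, and each $\sigma$-derivative falls on $G$ only (differentiating the $h_j$'s and the $\bar R$'s), producing again bounded, compactly supported factors; so one bounds $\|\tilde M_{11}q(\sigma',\cdot,\cdot)\|_{L^2(\mathbb{S}^{n-1};H^{(n-1)/2}(\Sigma_\sigma))}$ by $C_\Omega\|q\|_{L^\infty(\mathbb{R};L^2(\mathbb{R}^n))}$ times a finite sum of $C^{m}$-type norms of the $a_{i,j}$'s and the re-parametrized remainders $\bar R_{i,N}$, uniformly in $\sigma'$. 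Taking the supremum over $\sigma'$ finishes the reduction.

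It then remains to invoke Lemma~\ref{est_for_aR} (applied to both $q_1$ and $q_2$): for $k$ large enough and $\|q_i\|_{C^k}\le\varepsilon<1$, those $a_{i,j}$ and $\bar R_{i,N}$ norms are bounded by $C_{\Omega,N}\|q_i\|_{C^k}\le C_{\Omega,N}\varepsilon$. But here we have a \emph{product} of two scattered waves, so the resulting bound carries a factor $\varepsilon^2$ rather than $\varepsilon$; absorbing one $\varepsilon$ into $C_\Omega$ and keeping the other gives \eqref{est_M11_1}. The main technical obstacle is the same bookkeeping issue as in Lemma~\ref{est_for_aR}: one must check that after the substitution $s=\sigma'-\sigma$, $s'=\sigma'+\sigma$ and the shift by $t$, the arguments $t+s-x\cdot\omega$ and $t+s'+x\cdot\omega$ of the Heaviside factors $h_j$ and of the remainders stay in a region where the relevant support statements from \cite{MR1004174} apply, so that $G$ and all its needed $\sigma$-derivatives are genuinely compactly supported in $x$ with support independent of the parameters — this is what makes the Cauchy--Schwarz step legitimate and the constant $C_\Omega$ uniform. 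Everything else is a routine repetition of the estimates already carried out for $\tilde M_{10}q$.
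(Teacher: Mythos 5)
Your overall strategy matches the paper's: expand both $u_{1,\mathrm{sc}}^-$ and $u_{2,\mathrm{sc}}^+$ by Proposition~\ref{pr2}, exploit the compact $(t,x)$-support from Remark~\ref{rem_Sigma_T}, and feed the resulting $a_{i,j}$ and remainder norms into Lemma~\ref{est_for_aR} to extract the factor $\varepsilon$. However, there is a genuine gap in your treatment of the $H^{(n-1)/2}(\Sigma_\sigma)$ norm. You assert that every $\sigma$-derivative of $G$ "produces again bounded, compactly supported factors," and you then bound everything by Cauchy--Schwarz in $x$ plus an $L^\infty$ bound on $G$. This is false for the Heaviside factors: after the change of variables the product contains terms like $a_{1,j}a_{2,k}\,h_j(t+\sigma'+\sigma-x\cdot\omega)\,h_k(t+\sigma'-\sigma+x\cdot\omega)$, and since the expansion starts at $j=k=0$ with $h_0=h$ the Heaviside function, already the first $\sigma$-derivative produces $\delta(t+\sigma'\pm\sigma\mp x\cdot\omega)$ (and higher derivatives produce $\delta'$, etc.). Since $(n-1)/2\ge 1$ for $n\ge 3$, these singular terms are unavoidable, and they cannot be estimated by $\sup_{x}|G|$ — they are not functions of $x$ at all.

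The paper handles exactly this point by splitting the $\sigma$-derivatives according to how many fall on each factor: whenever a factor $h_k^{(l)}$ degenerates to a delta, the resulting term (the quantities $A_{kj}^{l}$, $B_{kj}^{l}$, $A_k^l$, $B_j^l$ in the paper's proof) is again a weighted Radon transform of $q$ with weight built from $a_{1,j}a_{2,k}h_j^{(l)}$ or from the remainders, and is estimated by Theorem~\ref{w_Radon_t_est} exactly as in Lemma~\ref{est_M10}; only the terms with no delta left (e.g.\ $C^{l_1l_2}$, $A_j^{l_1l_2}$, $B_k^{l_1l_2}$) are amenable to the crude $L^\infty$/Cauchy--Schwarz bound you propose. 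To repair your argument you must therefore reinstate the weighted Radon transform machinery for the delta-bearing terms rather than claiming they do not occur; the rest of your reduction (including the observation that one actually gains $\varepsilon^2$ from the two scattered factors) is sound.
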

\begin{proof}
    Let $N\in \mathbb{N}$ be sufficiently large. We set
    \begin{align*}
        &Q_j^k(t,x,\omega) = q(t,x) a_{1,j}(t,x,\omega)a_{2,k}(t,x,-\omega),\\
        &Q_j(t,x,\omega) = q(t,x) a_{1,j}(t,x,\omega),\\
        &Q^k(t,x,\omega) = q(t,x) a_{2,k}(t,x,-\omega).
    \end{align*}
    Next, we define
\begin{align*}
    &A_{kj}^l(\sigma',\sigma,\omega) = \int_{\Omega}\int_{-T-\sigma'}^{T - \sigma'}Q_j^k(t,x,\omega) h_j^{(l)}(t + \sigma' + \sigma - x\cdot\omega) \delta(t + \sigma' - \sigma + x\cdot\omega) dtdx, \\
    &B_{kj}^l(\sigma',\sigma,\omega) = \int_{\Omega}\int_{-T-\sigma'}^{T - \sigma'}Q_j^k(t,x,\omega) \delta(t + \sigma' + \sigma - x\cdot\omega) h_k^{(l)}(t + \sigma' - \sigma + x\cdot\omega) dtdx, \\
    &A_k^l(\sigma',\sigma,\omega) =  \int_{\Omega}\int_{-T-\sigma'}^{T - \sigma'}Q^k(t,x,\omega) \partial_{\sigma}^{l}R_{1,N}(t,x,\sigma' + \sigma,\omega) \delta(t + \sigma' - \sigma + x\cdot\omega) dtdx,\\
    &B_j^l(\sigma',\sigma,\omega) =  \int_{\Omega}\int_{-T-\sigma'}^{T - \sigma'}Q_j(t,x,\omega) \delta(t + \sigma' + \sigma - x\cdot\omega) \partial_{\sigma}^{l} R_{2,N}(t,x,\sigma'-\sigma,\omega)  dtdx
\end{align*}
and
\begin{align*}
    &A_j^{l_1l_2}(\sigma',\sigma,\omega) =  \int_{\Omega}\int_{-T-\sigma'}^{T - \sigma'}Q_j(t,x,\omega) h_j^{(l_1)}(t + \sigma' + \sigma - x\cdot\omega) \partial_{\sigma}^{l_2} R_{2,N}(t,x,\sigma'-\sigma,\omega) dtdx, \\
    &B_{k}^{l_1l_2}(\sigma',\sigma,\omega) =  \int_{\Omega}\int_{-T-\sigma'}^{T - \sigma'}Q^k(t,x,\omega) \partial_{\sigma}^{l_1}R_{1,N}(t,x,\sigma' + \sigma,\omega) h_k^{(l_2)}(t + \sigma' - \sigma + x\cdot\omega) dtdx, \\
    &C^{l_1l_2} (\sigma',\sigma,\omega) = \int_{\Omega}\int_{-T-\sigma'}^{T + \sigma'}q(t,x) \partial_{\sigma}^{l_1}R_{1,N}(t,x,\sigma' + \sigma,\omega) \partial_{\sigma}^{l_2} R_{2,N}(t,x,\sigma'-\sigma,\omega) dtdx.
\end{align*}
Then,
\begin{equation}\label{est_M11_5}
    \|\tilde M_{11}q(\sigma',\cdot,\cdot)\|_{L^2(\mathbb{S}^{n-1}_\omega; H^{(n-1)/2}(\Sigma_\sigma))} \leq \sum M_{kj}^{l_1l_2},
\end{equation}
where the sum is taking over all $j$, $k$, $l_1$, $l_2\leq N$ such that
\begin{equation*}
    j,k\leq N, 
    \quad
    l_1<j,
    \quad
    l_2<k,
    \quad
    l_1+l_2\leq \frac{n-1}{2}
\end{equation*}
and  
\begin{align*}
    M_{kj}^{l_1l_2} = & \| \partial_\sigma^{l_1}A_{kj}^{l_2}(\sigma',\cdot,\cdot)\|_{L^2(\Sigma_\sigma\times\mathbb{S}_\omega^{n-1})} + \| \partial_\sigma^{l_1}B_{kj}^{l_2}(\sigma',\cdot,\cdot)\|_{L^2(\Sigma_\sigma\times\mathbb{S}_\omega^{n-1})}\\
    & + \| \partial_\sigma^{l_1}A_{k}^{l_2}(\sigma',\cdot,\cdot)\|_{L^2(\Sigma_\sigma\times\mathbb{S}_\omega^{n-1})} + \| \partial_\sigma^{l_1}B_{j}^{l_2}(\sigma',\cdot,\cdot)\|_{L^2(\Sigma_\sigma\times\mathbb{S}_\omega^{n-1})}\\
    & + \| A_{j}^{l_1l_2}(\sigma',\cdot,\cdot)\|_{L^2(\Sigma_\sigma\times\mathbb{S}_\omega^{n-1})} + \| B_{k}^{l_1l_2}(\sigma',\cdot,\cdot)\|_{L^2(\Sigma_\sigma\times\mathbb{S}_\omega^{n-1})} + \| C^{l_1l_2}(\sigma',\cdot,\cdot)\|_{L^2(\Sigma_\sigma\times\mathbb{S}_\omega^{n-1})} .
\end{align*}
As we noted in the proof of Lemma \ref{est_for_aR}, $\partial_s^{l_1}R_{1,N}$ satisfies
    \begin{equation}\label{est_M11_2}
        (\partial_t^2-\Delta+q)D_{s}^{l_1}R_N(t,x,s,\omega)= - [(\partial_t^2-\Delta+q )a_{1,N}](t,x,\omega)h_{N}^{l_1}(s + t - x\cdot\omega)
    \end{equation}
and
    \begin{equation}\label{est_M11_3}
        D_{s}^{l_1}R_N(t,x,s,\omega) \arrowvert_{t < -s - \rho} = 0.
    \end{equation}
Therefore, the same steps we used to prove Lemma \ref{est_M10}, will give
\begin{multline}\label{est_M11_4}
    \| \partial_\sigma^{l_1}A_{kj}^{l_2}(\sigma',\cdot,\cdot)\|_{L^2(\Sigma_\sigma\times\mathbb{S}_\omega^{n-1})} + \| \partial_\sigma^{l_1}B_{kj}^{l_2}(\sigma',\cdot,\cdot)\|_{L^2(\Sigma_\sigma\times\mathbb{S}_\omega^{n-1})}\\
    + \| \partial_\sigma^{l_1}A_{k}^{l_2}(\sigma',\cdot,\cdot)\|_{L^2(\Sigma_\sigma\times\mathbb{S}_\omega^{n-1})} + \| \partial_\sigma^{l_1}B_{j}^{l_2}(\sigma',\cdot,\cdot)\|_{L^2(\Sigma_\sigma\times\mathbb{S}_\omega^{n-1})} \leq \varepsilon C_{\Omega} \|q\|_{L^\infty(\mathbb{R}; L^2(\mathbb{R}^n))}
\end{multline}
for any $\sigma'\in \mathbb{R}$. 

Next, we estimate
\begin{multline*}
    |B_{k}^{l_1l_2}(\sigma',\sigma,\omega)| \leq \|a_{2,k}\|_{L^{\infty}(\mathbb{R}\times\Omega\times\mathbb{S}^{n-1})} \\
    \times\int_{\Omega} \|q(\cdot, x)\|_{L^\infty(\mathbb{R})} \int_{-T}^T |\partial_{\sigma}^{l_1}R_{1,N}(t-\sigma',x,\sigma' + \sigma,\omega)| h_k^{(l_2)}(t - \sigma + x\cdot\omega) dtdx
\end{multline*}
Due to Lemma \ref{est_for_aR}, it follows 
\begin{multline*}
    \|B_{k}^{l_1l_2}(\sigma',\cdot,\cdot)\|_{L^\infty(\Sigma\times\mathbb{S}^{n-1})}\\
    \leq  C_\Omega \|q\|_{L^\infty(\mathbb{R};L^2(\Omega))} \sup_{\sigma\in \Sigma}\sup_{\omega\in\mathbb{S}^{n-1}} \sup_{x\in \Omega}\sup_{t\in [-T,T]} |\partial_{\sigma}^{l_1}R_{1,N}(t-\sigma',x,\sigma' + \sigma,\omega)|.
\end{multline*}
To estimate the right-hand side, we repeat some steps of Lemma \ref{est_for_aR}. Since $\partial_s^{l_1}R_{1,N}$ satisfies \eqref{est_M11_2} and \eqref{est_M11_3}, Lemma \ref{energ_est} gives 
\begin{multline*}
    \sup_{\sigma\in \Sigma} \sup_{t\in [-T,T]} |\partial_{\sigma}^{l_1}R_{1,N}(t-\sigma',x,\sigma' + \sigma,\omega)|\\ \leq C_\Omega \int_{-\sigma-\sigma' - \rho}^{T-\sigma'} \|A_{1,N}(\tau,\cdot,\omega)h_N^{l_1}(T+\sigma - (\cdot)\cdot \omega\|_{H^K(\mathbb{R}^n)}d\tau,
\end{multline*}
where $K = (n-1)/2 + 1$ and $A_{1,N}$ is the function defined by \eqref{A1N}. Let
\begin{equation*}
    z=\sup_{\sigma\in\Sigma}T+\sigma.
\end{equation*}
Due to \eqref{est_A1N_hN},
\begin{equation*}
    \sup_{\sigma\in \Sigma} \sup_{t\in [-T,T]} |\partial_{\sigma}^{l_1}R_{1,N}(t-\sigma',x,\sigma' + \sigma,\omega)| \leq C_{\Omega} \sup_{\tau\in\mathbb{R}}\|A_{1,N}(\tau,\cdot,\omega)\|_{H^K(\tilde{\Omega})}
\end{equation*}
for some compact $\tilde\Omega$ which depends only on $\Omega$. Then, \eqref{est_A1N} gives
\begin{equation*}
    \sup_{\sigma\in \Sigma} \sup_{t\in [-T,T]} |\partial_{\sigma}^{l_1}R_{1,N}(t-\sigma',x,\sigma' + \sigma,\omega)| \leq C_{\Omega} \|q_1\|_{C^{L + \frac{n-1}{2}+3+2N}(\mathbb{R}\times\bar\Omega)} \leq \varepsilon C_\Omega,
\end{equation*}
and hence,
\begin{equation*}
    \|B_{k}^{l_1l_2}(\sigma',\cdot,\cdot)\|_{L^\infty(\Sigma\times\mathbb{S}^{n-1})} \leq \varepsilon C_\Omega \|q\|_{L^\infty(\mathbb{R};L^2(\Omega))}.
\end{equation*}
Similarly, this estimate holds also for $A_{j}^{l_1l_2}$ and $C^{l_1l_2}$. Hence, from \eqref{est_M11_5} and \eqref{est_M11_4}, we obtain \eqref{est_M11_1}.
\end{proof}

Finally, we prove Theorem \ref{thm_1}.
\begin{proof}[Proof of Theorem \ref{thm_1}]
    Let $C$ be a constant form Lemma \ref{est_M00} and $C_\Omega$ be a common constant from Lemmas \ref{est_M10} and \ref{est_M11}. Let us fix $0<\varepsilon<1$ such that $1/C - 3\varepsilon C_\Omega>0$. Next, we choose $k\in \mathbb{N}$ as large as Lemmas \ref{est_M10} and \ref{est_M11} require. Then, under conditions $\|q_1\|_{C^k(\mathbb{R}\times \bar\Omega)}$, $\|q_1\|_{C^k(\mathbb{R}\times \bar\Omega)}<\varepsilon$, Lemmas \ref{est_M00}, \ref{est_M10}, and \ref{est_M11} imply
    \begin{equation*}
        \|\tilde{M}q\|_{L^\infty(\mathbb{R}_{\sigma'}; L^2(\mathbb{S}_\omega^{n-1}; H^{(n-1)/2}(\Sigma_\sigma)))} \geq \frac{1}{C}\|q\|_{L^\infty(\mathbb{R}; L^2(\mathbb{R}^n))} - 3 \varepsilon C_{\Omega} \|q\|_{L^\infty(\mathbb{R}; L^2(\mathbb{R}^n))}.
    \end{equation*}
    This completes the proof.
\end{proof}

\begin{remark}
We want to emphasize on some subtle moment in the proof.  The integration in \r{pr1-est} happens inside the wedge in Figure~\ref{time-dep-pot_pics1}, which, intersected with the cylinder $|x|\le \rho$, is compact. On the other hand, $M_{00}q$ integrates over the ``line segment'' (a hyperplane) there only while the other integrals integrate  $q$ inside the whole wedge. In order to absorb the $\tilde M_{10}q$, etc., terms, we need them to be small in $q$, which they are but they depend on $q$ over a set larger than the one needed in $\tilde M_{00}q$. This arguments still works because we actually extend the estimates to $q$ everywhere in the $t$ variable by taking a supremum in $\sigma'$ above. On the other hand, if we wanted to establish local stability, like estimating $q$ for $t$ over a finite time interval having finite time back-scattering data, that would have been be a problem. 
\end{remark}

\appendix
\section{Scattering theory for time-dependent potentials} \label{sec_appendix} 
We recall the basics of the scattering theory for time-dependent perturbations of the wave equation by restricting it to time-dependent potentials. We follow Cooper and Strauss \cite{CooperS,CooperS84}, where it was introduced for moving obstacles, and its adaptation to time-dependent potentials in \cite{MR1004174}. Some of the statements below are new however, like Theorem~\ref{thm_TD_AWP2} and Theorem~\ref{TD_thm_sc1}. This theory is a natural extension of (a part of) the Lax-Phillips scattering theory. We consider  the wave equation \r{TD_14} with  a smooth time dependent potential $q(t,x)$ supported in the cylinder $\R\times\overline{B(0,R)}$.  We assume $n\ge3$, odd to avoid working with the non-local translation representation when $n$ is even. 

\subsection{Lax-Phillips formalism about the wave equation} 
The natural Cauchy problem for the wave equation is the following
\be{TD_5}
(\partial_t^2-\Delta)u=0, \quad (u,u_t)|_{t=0} =  (f_1,f_2).
\ee
 We convert the wave equation into a system by setting $\mathbf{u}(t)=(u,u_t)$; then
\be{TD_A}
\partial_t \mathbf{u} = A\mathbf{u}, \quad A:= \begin{pmatrix} 
   0 & \Id \\
   \Delta & 0
  \end{pmatrix}.
\ee
We use boldface to denote vector-valued functions not necessarily of the type $(u,u_t)$ if there is no background scalar function $u(t,x)$ present. In particular, $\mathbf{u}(t)$ in Definition~\ref{def_out} below is not necessarily of that form. 

The natural energy space of states of finite energy is defined as the completion of $C_0^\infty\times C_0^\infty$ under the energy norm
\[
\|\mathbf{f}\|^2_{\mathcal{H}}= \frac12\int \left( |\nabla f_1|^2+|f_2|^2     \right)\d x, \quad \mathbf{f}:=(f_1,f_2). 
\]
In particular, the first term defines the Dirichlet space $H_D(\R^n)$ with norm $\|\nabla f\|_{L^2}$.  When $n\ge3$, they are locally in $L^2$, as it follows from the  Poincar\'e inequality. 
The operator $A$ naturally extends to a skew-selfadjoint one (i.e, $\i A$ is self-adjoint) on $\mathcal{H}$. Then by Stone's theorem, $U_0(t) = e^{tA}$ is a well-defined strongly continuous unitary group, and the solution of \r{TD_A} is given by $\mathbf{u}(t) = U_0(t)\mathbf{f}$. The unitarity means energy conservation, in particular. 

We define the local energy space $\mathcal{H}_\textrm{loc}$ in the usual way. By the finite speed of propagation, the Cauchy problem \r{TD_5} has a well defined solution in $\mathcal{H}_\textrm{loc}$ if the Cauchy data $\mathbf{f}$ is in  $\mathcal{H}_\textrm{loc}$ only. We view those solutions as ones with (possibly) infinite energy but locally finite one.
 Then $\mathbf{u}\in C(\R;\; \mathcal{H}_\textrm{loc})$ and the wave equation is solved in distribution sense. 
 One can easily extend this to distributions. 
 
 \subsection{Existence of dynamics} 
By \cite{Kato_70}, see also \cite[X.12]{Reed-Simon2}, the solution to 
\be{TD_17}
(\partial_t^2-\Delta+q(t,x))u=0,\quad    (u,u_t)|_{t=s}=(f_1,f_2)
\ee
is given by $\mathbf{u}(t)= U(t,s)\mathbf{f}$, where $\mathbf{f}=(f_1,f_2)$ and $U(t,s)$ is a two-parameter strongly continuous group of bounded operators with the properties
 \begin{itemize}
   \item[(i)] $U(t,s)U(s,r) = U(t,r)$ for all $t,s,r$; and $U(t,t)=\Id$, 
   \item[(ii)] $\|U(t,s)\|\le \exp\left\{ C|t-s|\sup_{s\le\tau\le t, \; x\in \R^n}|q(\tau,x)|  \right\}$,
   \item[(iii)] for any $\mathbf{f}\in D(A)$, we have $U(t,s)\mathbf{f}\in D(A)$ and
\be{TD_UU}
\frac{\d}{\d t} U(t,s)\mathbf{f} = (A-Q(t))U(t,s)\mathbf{f}, \quad 
\frac{\d}{\d s} U(t,s)\mathbf{f} = -U(t,s)(A-Q(s))\mathbf{f},
\ee 
\end{itemize}
where $Q(t)\mathbf{f}= (0,q(t,\cdot)f_1)$ (and $Q(t)$ is clearly bounded). 

The two-parameter semi-group admits the expansion
\be{TD_exp}
U(t,s) = U_0(t-s)+\sum_{k=1}^\infty V_k(t,s),
\ee
where
\[\begin{split}
V_k(t,s)\mathbf{f}& = (-1)^k \int_s^t \d s_1 \int_s^{s_1} \d s_k \dots \int_s^{s_{k-1}}\d s_k\\
&\qquad \times  U_0(t-s_1)Q(s_1) \dots U_0(s_{k-1}-s_k) Q(s_k)U_0(s_k-s)\mathbf{f}, \quad k\gg1.
\end{split} 
\]
This expansion is an iterated version of the Duhamel's formula
\be{TD_Duh}
\begin{split}
U(t,s) &= U_0(t-s)+ \int_s^t U(t,\sigma)Q(\sigma)U_0(\sigma-s) \,\d \sigma\\
&= U_0(t-s)+ \int_s^t U_0(t-\sigma)Q(\sigma)U(\sigma,s)\,\d \sigma. 
\end{split}
\ee
The convergence of \r{TD_exp} follows from the estimate
\[
\|V_k(t,s)\|\le \frac{|t-s|^k}{k!} \left( \sup_{s\le\tau\le t}\|Q(\tau)\| \right)^k.
\]
In particular, we get that we still have the finite speed of propagation property:
\[
\supp U(t,s)\mathbf{f}\subset \supp\mathbf{f}+B(0,|t-s|). 
\]
As before, the finite speed of propagation allows us to can extend $U(t,s)$ to the space $\mathcal{H}_\textrm{loc}$ by a partition of unity. 

Finally, notice that when $q$ is time independent, then $U(t,s)$ depends on the difference $t-s$ only, i.e., $U(t,s)=U(t-s)$ where $U$ is a group. It is not unitary however (unless $q=0$) in the space $\mathcal{H}$. If we redefine the energy norm by
\be{TD_q}
\|\mathbf{f}\|^2_{\mathcal{H}^q}= \int \left( |\nabla f_1|^2+q|f_1|^2+ |f_2|^2     \right)\d x, 
\ee
(we need to know that it is a norm however, and $q\ge0$ suffices for that), then $U(t)$ is unitary in $\mathcal{H}^q$. 

\subsection{Plane waves, translation representation  and asymptotic wave profiles of free solutions}  The   plane waves
\[
\delta(t-\omega\cdot x)
\]
solve the wave equation, obviously. They can be thought of as plane waves propagating in the direction $\omega$ with speed one. If we replace $t$ by $t+s$ there, we can think of $s$ as the delay time.  The  plane wave above is the Schwartz kernel of the Radon transform
\[
Rf(s,\omega) = \int \delta(s-\omega\cdot x)f(x)\,\d x= \int_{x\cdot\omega=s}f(x)\,\d S_x.
\]
For any density $g(\omega,s)$ (which can be a distribution as well), the superposition
\be{TD_7a}
u(t,x) := \int_{\R\times S^{n-1}}\delta(t+s-\omega\cdot x)g(s,\omega)\,\d s\,\d \omega  = \int_{S^{n-1}} g(\omega\cdot x-t,\omega)\,\d\omega
\ee
is still a solution of the wave equation. The expression above can be recognized as the the transpose $R'$ of the Radon transform applied to $g_t(s,\omega):= g(s-t,\omega)$. 
It turns out that all solutions of the free wave equation in the energy space have that form. 

Indeed, in \cite{LP}, Lax and Phillips defined the  \textit{free translation representation} $\mathcal{R}: \mathcal{H}\to L^2(\R\times S^{n-1})$ as follows
\be{TD_8}
k(s,\omega) = \mathcal{R}\mathbf{f}(s,\omega) = c_n(-\partial_s^{(n+1)/2} Rf_1+\partial_s^{(n-1)/2} Rf_2),
\ee
where $R$ is the Radon transform and $c_n=2^{-1}(2\pi)^{(1-n)/2}$, $c_n^-= 2^{-1}(-2\pi)^{(1-n)/2}$. The inverse is given by
\be{TD_9}
\mathcal{R}^{-1}k(x) = 2c_n^-\int_{S^{n-1}}\left(-\partial_s^{(n-3)/2} k(x\cdot\omega,\omega), \; \partial_s^{(n-1)/2}k(x\cdot\omega,\omega)\right)\d\omega. 
\ee
The map $\mathcal{R}$ is unitary, and $(\mathcal{R}U_0(t)\mathcal{R}^{-1} k)(s,\omega) = k(s-t,\omega)$, which explains the name. We also set
\be{TD_10}
u^\sharp (s,\omega) = (-1)^{(n-1)/2} k(s,\omega)
\ee
and call $u^\sharp$ the \textit{asymptotic wave profile} of the solution $\mathbf{u}(t)= U_0(t)\mathbf{f}$. This name is justified by the theorem below, and it is the analog of the far free pattern for solutions of the free wave equation.

\begin{theorem}[Lax-Phillips, \cite{LP}] 
Let $\mathbf{u}(t)= U_0(t)\mathbf{f}$, $\mathbf{f}\in \mathcal{H}$. Then
\be{TD_11}
\int\Big| u_t-|x|^{- (n-1)/2 }u^\sharp\Big(|x|-t,\frac{x}{|x|}\Big) \Big|^2\, \d x \to 0, \quad \text{as $|t|\to\infty$}. 
\ee
\end{theorem}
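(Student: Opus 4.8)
The plan is to prove \eqref{TD_11} for $t\to+\infty$; the case $t\to-\infty$ then follows by time reversal together with the relation between the outgoing and incoming translation representations (which differ by the reflection $(s,\omega)\mapsto(-s,-\omega)$). First I would reduce to a dense class of data. By unitarity of $\mathcal R$, the set of $\mathbf{f}=\mathcal R^{-1}k$ with $k\in C_0^\infty(\R\times S^{n-1})$ is dense in $\mathcal H$, and by \eqref{TD_9} together with the support theorem for the Radon transform such $\mathbf{f}$ is smooth and supported in some ball $\{|x|\le R\}$. This reduction is legitimate because the quantity in \eqref{TD_11} is, uniformly in $t$, dominated by the same quantity for a nearby $\mathbf{f}'$ plus a multiple of $\|\mathbf{f}-\mathbf{f}'\|_{\mathcal H}$: indeed $\|u_t(t,\cdot)\|_{L^2}\le\sqrt2\,\|U_0(t)\mathbf{f}\|_{\mathcal H}=\sqrt2\,\|\mathbf{f}\|_{\mathcal H}$ by energy conservation, while in polar coordinates the weight $|x|^{-(n-1)}$ is cancelled by the Jacobian $r^{n-1}$, so $\big\||x|^{-(n-1)/2}u^\sharp(|x|-t,x/|x|)\big\|_{L^2(\R^n)}\le\|u^\sharp\|_{L^2(\R\times S^{n-1})}=\|\mathbf{f}\|_{\mathcal H}$ by \eqref{TD_10} and unitarity of $\mathcal R$.

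For such $\mathbf{f}$, the translation representation of $\mathbf u(t)=U_0(t)\mathbf{f}$ is $k(\cdot-t,\omega)$ by the intertwining property, so applying \eqref{TD_9} gives
\[
u_t(t,x)=2c_n^-\int_{S^{n-1}}(\partial_s^{(n-1)/2}k)(x\cdot\omega-t,\omega)\,\d\omega .
\]
By finite speed of propagation and the strong Huygens principle (here $n$ odd enters), $u(t,\cdot)$, hence $u_t(t,\cdot)$, is supported in the shell $\{\,|t|-R\le|x|\le|t|+R\,\}$ once $|t|>R$, and on this shell, writing $r=|x|$ and $\theta=x/|x|$, the integrand above is supported in an $O(\sqrt{R/r})$ cap of $S^{n-1}$ around $\theta$.

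On that cap I would carry out a Laplace-type (phase-free) asymptotic: parametrizing $\omega=\theta\sqrt{1-|\eta|^2}+\eta$ with $\eta\perp\theta$, and rescaling $\eta=r^{-1/2}\zeta$, the argument $x\cdot\omega-t$ becomes $(r-t)-\tfrac12|\zeta|^2+O(r^{-1}|\zeta|^4)$, which yields, uniformly on the shell,
\[
u_t(t,x)=r^{-(n-1)/2}\,G(r-t,\theta)+O\big(r^{-(n+1)/2}\big),\qquad
G(\tau,\theta)=2c_n^-\int_{\R^{n-1}}(\partial_s^{(n-1)/2}k)\big(\tau-\tfrac12|\zeta|^2,\theta\big)\,\d\zeta .
\]
Passing to polar coordinates in $\zeta$ exhibits $G(\cdot,\theta)$ as a constant multiple of the Riemann--Liouville fractional integral of order $(n-1)/2$ of $(\partial_s^{(n-1)/2}k)(\cdot,\theta)$, hence of $k(\cdot,\theta)$; a short computation shows that the constant, assembled from $c_n^-$, $|S^{n-2}|$ and $\Gamma((n-1)/2)$, equals $(-1)^{(n-1)/2}$, so $G=u^\sharp$ by \eqref{TD_10}. (Alternatively, $G$ defines an isometric intertwiner of $U_0(t)$ with the shift, so $G=u^\sharp$ by the essential uniqueness of the translation representation of Lax--Phillips \cite{LP}.)

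Finally, the remainder is $O(r^{-(n+1)/2})$ and supported in a shell of thickness $2R$ at radius $\sim t$, so its squared $L^2(\R^n)$ norm is $O\big(t^{n-1}\cdot t^{-(n+1)}\big)=O(t^{-2})\to0$; and $|x|^{-(n-1)/2}u^\sharp(|x|-t,x/|x|)$ is supported in that same shell, so subtracting it from $u_t(t,\cdot)$ leaves exactly this remainder, and \eqref{TD_11} follows. I expect the main obstacle to be the Laplace-asymptotics step — turning the heuristic concentration near $\omega=\theta$ into the displayed expansion with a remainder that is \emph{genuinely} $O(r^{-(n+1)/2})$ uniformly on the shell (controlling the higher Taylor terms of $\theta\cdot\omega$, the surface-measure Jacobian, and the tail of the $\zeta$-integral) — together with pinning down the multiplicative constant so that the limiting profile is $u^\sharp$ itself rather than a nonzero multiple of it.
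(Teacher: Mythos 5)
The paper does not prove this statement at all: it is quoted verbatim from Lax--Phillips \cite{LP} (with the sign correction $(-1)^{(n-1)/2}$ of Cooper--Strauss built into \r{TD_10}), so there is no in-paper argument to compare against. Your proposal is essentially the classical proof — density reduction, the explicit formula $u_t(t,x)=2c_n^-\int_{S^{n-1}}(\partial_s^{(n-1)/2}k)(x\cdot\omega-t,\omega)\,\d\omega$ from \r{TD_9}, Huygens localization to a shell, and a Laplace-type expansion of the sphere integral near $\omega=\theta$ — and the heart of it is sound. In particular your constant is right: $2c_n^-\,|S^{n-2}|\,2^{(n-3)/2}\,\Gamma(\tfrac{n-1}{2})=(-1)^{(n-1)/2}$, so $G=(-1)^{(n-1)/2}k=u^\sharp$, which is exactly the factor the paper's Remark flags. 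The error analysis also survives scrutiny (the a priori worst correction, from replacing $k(\cdot,\omega)$ by $k(\cdot,\theta)$, is only $O(r^{-1/2})$ relative, but its first-order term is odd in $\zeta$ and integrates to zero; and even an $O(r^{-n/2})$ remainder would suffice, since its squared $L^2$ norm on the shell is $O(t^{-1})$).

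Two concrete gaps remain. First, your dense class is wrong as stated: for $k\in C_0^\infty(\R\times S^{n-1})$, $\mathcal R^{-1}k$ is generally \emph{not} compactly supported — the compactly supported range of the Radon transform is cut out by the Helgason--Ludwig moment conditions, and the support theorem runs in the other direction. Since you need compact support of the Cauchy data to invoke strong Huygens, you should instead take $\mathbf f\in C_0^\infty\times C_0^\infty$ (dense in $\mathcal H$ by definition), for which $k=\mathcal R\mathbf f$ is automatically smooth and supported in $|s|\le R$; your uniform-in-$t$ domination argument then closes the reduction exactly as you wrote it. Second, the $t\to-\infty$ case does not "follow" in the form \r{TD_11}: for compactly supported data and $t\ll0$, $u_t$ lives on $|x|\approx-t$, where $|x|-t\approx-2t\to+\infty$ lies outside $\supp u^\sharp(\cdot,\theta)$, so the profile term vanishes identically on $\supp u_t$ and the displayed formula cannot hold verbatim (it would force $\|u_t\|_{L^2}\to0$). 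Time reversal produces the \emph{reflected} representer $u^\sharp(-|x|-t,-x/|x|)$ as the $t\to-\infty$ profile — consistent with the original Lax--Phillips theorem, which is stated for $t\to+\infty$; the "$|t|\to\infty$" in \r{TD_11} should be read with that modification (only the $+\infty$ direction is used in the paper). You should state the $-\infty$ version correctly rather than assert it follows with the same formula.
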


\begin{remark}
In \cite{LP}, the factor $(-1)^{(n-1)/2}$ is missing from \r{TD_10}, i.e., $u^\sharp= k$. Cooper and Strauss in \cite{CooperS} found out that this factor must be present in \r{TD_10}. 
\end{remark}

\subsection{Outgoing solutions and their asymptotic wave profiles} \label{sec_OS}
We follow here \cite{Cooper_Strauss_79, CooperS}. 
 Given $u(t,x)$ (and only then), recall the notation $\mathbf{u}(t) :=(u(t,\cdot),u_t(t,\cdot))$, see \r{TD_A}.

\begin{definition}\label{def_out}
The function  $\mathbf{u}(t)\in C(\R;\;\mathcal{H}_\textrm{\rm loc})$ is called outgoing if $\lim_{t\to-\infty} (\mathbf{u}(t),U_0(t) \mathbf{g})=0$ for each $\mathbf{g}\in C_0^\infty(\R^n) \times C_0^\infty(\R^n)$. 
\end{definition}

In this definition, $\mathbf{u}(t)$ does not need to be a solution of the wave equation (anywhere). On the other hand, if $u(t,x)$ solves the wave equation in $|x|>\rho$ for some $\rho>0$, then, see \cite{Cooper_Strauss_79, CooperS}, $u$ is outgoing if and only if for any $T\in \R$, $U_0(t-T)\mathbf{u}(T)=0$ in the forward cone $|x|<t-T-\rho$. 

One simple example of non-trivial outgoing solutions (for $|x|>\rho)$ is the following. Let $p\in L^1_\textrm{loc}(\R;\;L^2(\R^n))$ satisfy $p=0$ for $t<t_0$, where $t_0$ is fixed. 
Solve
\be{TD_13}
(\partial_t^2-\Delta)u=p(t,x) \quad \text{in $\R\times\R^n$}. 
\ee
 with Cauchy data 
\[
(u,u_t)|_{t=t_0}=(0,0).
\]
 By  Duhamel's formula,
\[
\mathbf{u}(t)= \int_{t_0}^t U_0(t-s)\mathbf{p}(s)\,\d s, \quad \mathbf{p}(s) := (0,p(s,\cdot)).
\]
The latter is well-defined in $\mathcal{H}_\textrm{loc}$ by finite speed of propagation. The solution for $t<t_0$ is just zero. Then $\mathbf{u}$ is outgoing in a trivial way. Moreover, this is the unique outgoing solution of \r{TD_13}. Indeed, take the difference $v$ of any two. Then $\mathbf{v}(t)=U_0(t)\mathbf{f}$, where $\mathbf{f}$ is the initial condition. Then $0=\lim_{t\to-\infty} (\mathbf{v}(t),U_0(t) \mathbf{g})=  (\mathbf{f},\mathbf{g})$, for any test function $\mathbf{g}$; therefore, $\mathbf{f}=0$ and then $\mathbf{v}=0$.

This can be generalized as follows.
\begin{theorem}[\cite{Cooper_Strauss_79, CooperS}] \label{TD_thm_3}
Let $p\in L^1_\textrm{\rm loc}(\R;\;L^2(\R^n))$ and assume that for each $t$, 
\be{TD_12a}
 \lim_{T\to-\infty}\int_{T}^t  U_0(-s)\mathbf{p}(s)\,  \d s  \quad\text{exists in $\mathcal{H}_\textrm{\rm loc}$},  \quad \mathbf{p}(s) := (0,p(s,\cdot).
\ee
 Then there exists a unique outgoing solution $\mathbf{u}\in C(\R;\;\mathcal{H}_\textrm{loc})$ of \r{TD_13} given by
\[
\mathbf{u}(t)= \int_{-\infty }^t U_0(t-s)\mathbf{p}(s)\,\d s.
\]
\end{theorem}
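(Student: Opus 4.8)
The plan is to follow the pattern of the simple example preceding the theorem, upgrading everything from $\mathcal{H}$ to $\mathcal{H}_\textrm{loc}$. Write $\mathbf{w}(t):=\lim_{T\to-\infty}\int_T^t U_0(-s)\mathbf{p}(s)\,\d s$, which exists in $\mathcal{H}_\textrm{loc}$ by hypothesis, and set $\mathbf{u}(t):=U_0(t)\mathbf{w}(t)=\int_{-\infty}^t U_0(t-s)\mathbf{p}(s)\,\d s$, using that $U_0(t)$ extends to $\mathcal{H}_\textrm{loc}$ by finite speed of propagation. The first thing I would record is the cocycle identity $\mathbf{w}(t_2)=\mathbf{w}(t_1)+\int_{t_1}^{t_2}U_0(-s)\mathbf{p}(s)\,\d s$ for $t_1<t_2$, where the finite-interval integral is a genuine Bochner integral already in $\mathcal{H}$ (since $\|\mathbf{p}(s)\|_{\mathcal{H}}\simeq\|p(s,\cdot)\|_{L^2}\in L^1_\textrm{loc}$ and $U_0$ is unitary); only the improper lower limit needs $\mathcal{H}_\textrm{loc}$. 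Applying $U_0(t_2)$ yields the Duhamel identity $\mathbf{u}(t_2)=U_0(t_2-t_1)\mathbf{u}(t_1)+\int_{t_1}^{t_2}U_0(t_2-s)\mathbf{p}(s)\,\d s$.

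From this identity the analytic properties follow immediately. For $\mathbf{u}\in C(\R;\mathcal{H}_\textrm{loc})$: fix $t_1$; the first term is continuous in $t_2$ by strong continuity of $U_0$ on $\mathcal{H}_\textrm{loc}$, and the second by absolute continuity of the integral of the $L^1_\textrm{loc}$ map $s\mapsto U_0(t_2-s)\mathbf{p}(s)$. That $\mathbf{u}$ solves $(\partial_t^2-\Delta)u=p$ in the distributional sense is also read off the identity: the first term solves the homogeneous equation and the second is the standard Duhamel solution of the inhomogeneous Cauchy problem with zero data at $t=t_1$.

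The substantive point is the outgoing property. Letting $t_1\to-\infty$ with $t_2$ fixed in the cocycle identity, the integral $\int_{t_1}^{t_2}U_0(-s)\mathbf{p}(s)\,\d s$ tends to $\mathbf{w}(t_2)$ by the definition of the improper integral, so $\mathbf{w}(t_1)\to0$ in $\mathcal{H}_\textrm{loc}$ as $t_1\to-\infty$. For a test state $\mathbf{g}\in C_0^\infty(\R^n)\times C_0^\infty(\R^n)$ one has $(\mathbf{u}(t),U_0(t)\mathbf{g})=(U_0(t)\mathbf{w}(t),U_0(t)\mathbf{g})=(\mathbf{w}(t),\mathbf{g})$, where the last equality is a ``local unitarity'' identity: since $U_0(t)\mathbf{g}$ is compactly supported, approximating $\mathbf{w}(t)$ in $\mathcal{H}_\textrm{loc}$ by $C_0^\infty$ states that agree with it on a large ball and invoking finite speed of propagation reduces the pairing to one over a fixed compact set on which $U_0(t)$ preserves the energy inner product. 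Since $\mathbf{g}$ has fixed compact support and $\mathbf{w}(t)\to0$ in $\mathcal{H}_\textrm{loc}$, the right-hand side tends to $0$; hence $\mathbf{u}$ is outgoing.

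For uniqueness, the difference $\mathbf{v}$ of two outgoing solutions solves the homogeneous wave equation on all of $\R\times\R^n$, so by uniqueness for the Cauchy problem in $\mathcal{H}_\textrm{loc}$ we have $\mathbf{v}(t)=U_0(t)\mathbf{f}$ with $\mathbf{f}=\mathbf{v}(0)$; the same local unitarity identity gives $(\mathbf{v}(t),U_0(t)\mathbf{g})=(\mathbf{f},\mathbf{g})$ for all $t$, and letting $t\to-\infty$ forces $(\mathbf{f},\mathbf{g})=0$ for every test $\mathbf{g}$, so $\mathbf{f}=0$ and $\mathbf{v}\equiv0$. I expect the main obstacle to be purely the bookkeeping with $\mathcal{H}_\textrm{loc}$ in place of $\mathcal{H}$: justifying the local-unitarity pairing $(U_0(t)\mathbf{a},U_0(t)\mathbf{b})=(\mathbf{a},\mathbf{b})$ for $\mathbf{a}\in\mathcal{H}_\textrm{loc}$ and $\mathbf{b}$ compactly supported, and keeping track of how the support of $U_0(t)\mathbf{g}$ escaping to spatial infinity as $t\to-\infty$ is harmless precisely because $\mathbf{w}(t)\to0$ in every local seminorm of $\mathcal{H}_\textrm{loc}$.
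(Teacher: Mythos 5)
Your proposal is correct and follows essentially the same route as the paper: the paper's (terser) proof also establishes the outgoing property by writing $(\mathbf{u}(t),U_0(t)\mathbf{g})=\int_{-\infty}^t(U_0(-s)\mathbf{p}(s),\mathbf{g})\,\d s$ via unitarity and letting $t\to-\infty$ using hypothesis \r{TD_12a}, while uniqueness is handled by the same $(\mathbf{f},\mathbf{g})=0$ argument given for the model example preceding the theorem. Your additional bookkeeping (the cocycle identity, continuity, and the local-unitarity pairing in $\mathcal{H}_{\textrm{loc}}$) is sound and simply fills in details the paper leaves implicit.
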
 
\begin{remark}
Clearly, $p\in L^1((-\infty,a);\;L^2(\R^n))$ for any $a$ would guarantee the regularity assumption on $p$ and \r{TD_12a}. Also, the assumptions on $p$ in next theorem are enough. 
\end{remark} 
\begin{proof}
The absolute convergence of the integral in ${H}^0_\textrm{loc}$ follows from the assumptions. To show  that $u$ is outgoing,  for $\mathbf{g} \in C_0^\infty\times C_0^\infty$, consider
\[
(\mathbf{u}(t), U_0(t)\mathbf{g})= \int_{-\infty}^t\left(  U_0(t-s)\mathbf{p}(s), U_0(t)\mathbf{g} \right) \d s = \int_{-\infty}^t\left(  U_0(-s)\mathbf{p}(s), \mathbf{g} \right) \d s.
\]
The latter converges to $0$, as $t\to -\infty$ by assumption. 
\end{proof}

\begin{theorem}[\cite{Cooper_Strauss_79, CooperS}]  \label{thm_TD_AWP} 
Let $n\ge3$ be odd. Let  $p\in L^1_\textrm{\rm loc}(\R;\;L^2(\R^n))$ with $p(t,x)=0$ for $|x|>\rho$. Let $u$ be the unique outgoing solution of  \r{TD_13}. 

(a) Then there is a unique function $u^\sharp \in L_\textrm{\rm loc}^2(\R\times S^{n-1})$ such that for all $R_1<R_2$ we have
\[
\int_{R_1+t<|x|<R_2+t} \left| u_t(t,x) - |x|^{-(n-1)/2} u^\sharp \left( |x|-t,\frac{x}{|x|}\right) \right|^2\d x\to0, \quad \text{\rm as $t\to\infty$}. 
\]

(b) If $p\in C_0^\infty$, 
\be{T_D_13a}
u^\sharp(s,\omega) = c_n^- \partial_s^{(n-1)/2}\int p(\omega\cdot x-s,x)\,\d x.
\ee

(c) The map $p\to u^\sharp$ is continuous.
\end{theorem}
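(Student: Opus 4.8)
The plan is to realize $u$ as a continuous superposition of free waves via Duhamel's formula and then to add up their already-understood asymptotic wave profiles. By Theorem~\ref{TD_thm_3} the outgoing solution is $\mathbf{u}(t)=\int_{-\infty}^t U_0(t-s)\mathbf{p}(s)\,\d s$ with $\mathbf{p}(s)=(0,p(s,\cdot))$, and $\|\mathbf{p}(s)\|_{\mathcal H}^2=\tfrac12\|p(s,\cdot)\|_{L^2}^2<\infty$ for a.e.\ $s$, so each $\tau\mapsto U_0(\tau)\mathbf{p}(s)$ is a finite-energy free solution. Writing $\mathbf{w}_s(t):=U_0(t-s)\mathbf{p}(s)=(w_s(t,\cdot),\partial_t w_s(t,\cdot))$ and taking second components gives $u_t(t,\cdot)=\int_{-\infty}^t\partial_t w_s(t,\cdot)\,\d s$. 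By the Lax--Phillips theorem on free solutions, the asymptotic wave profile of $\tau\mapsto U_0(\tau)\mathbf{p}(s)$ is $W_s^\sharp:=(-1)^{(n-1)/2}\mathcal R\mathbf{p}(s)$, i.e.\ $\partial_t w_s(t,x)\approx|x|^{-(n-1)/2}W_s^\sharp(|x|-(t-s),x/|x|)$ as $t\to\infty$. This motivates the definition
\[
u^\sharp(\sigma,\omega):=\int_{\R}W_s^\sharp(\sigma+s,\omega)\,\d s=(-1)^{(n-1)/2}\int_{\R}(\mathcal R\mathbf{p}(s))(\sigma+s,\omega)\,\d s .
\]

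The device that makes every integral converge is the strong Huygens principle (valid for odd $n\ge3$), equivalently the locality of $\mathcal R$ and $\mathcal R^{-1}$ in \eqref{TD_8}--\eqref{TD_9}: since $p(s,\cdot)$ is supported in $\{|x|\le\rho\}$, the function $\sigma\mapsto\mathcal R\mathbf{p}(s)(\sigma,\omega)$ is supported in $\{|\sigma|\le\rho\}$, and $U_0(t-s)\mathbf{p}(s)$ is supported in the shell $\{t-s-\rho\le|x|\le t-s+\rho\}$. Hence, for $x$ in a fixed annulus $\{R_1+t<|x|<R_2+t\}$ (so $\sigma:=|x|-t\in(R_1,R_2)$), only $s$ in the fixed bounded interval $J:=[-\rho-R_2,\ \rho-R_1]$ contributes, both to $u^\sharp(\sigma,\omega)$ and, once $t>\rho-R_1$, to $u_t(t,x)=\int_J\partial_t w_s(t,x)\,\d s$. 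Since $\int_J\|p(s,\cdot)\|_{L^2}\,\d s<\infty$, this already gives $u^\sharp\in L^2_{\mathrm{loc}}(\R\times S^{n-1})$.

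For part (a), expand the difference over the annulus as $\int_J\big[\partial_t w_s(t,x)-|x|^{-(n-1)/2}W_s^\sharp(|x|-t+s,x/|x|)\big]\,\d s$ and apply Minkowski's integral inequality in $L^2$ of the annulus; it then suffices to show that for each fixed $s$ the $L^2$-norm over the annulus of the bracketed term tends to $0$ as $t\to\infty$ and is bounded, uniformly in $t$, by an $L^1(J)$ function of $s$. The first is the Lax--Phillips theorem applied with the time shift $t\mapsto t-s$. The second follows from unitarity of $U_0$, which gives $\|\partial_t w_s(t,\cdot)\|_{L^2}\le\sqrt2\,\|\mathbf{p}(s)\|_{\mathcal H}$, together with unitarity of $\mathcal R$ and the polar-coordinate identity $\int_{R_1+t<|x|<R_2+t}|x|^{-(n-1)}|g(|x|-t+s,x/|x|)|^2\,\d x=\|g(\cdot+s,\cdot)\|^2_{L^2((R_1,R_2)\times S^{n-1})}\le\|g\|^2_{L^2(\R\times S^{n-1})}$ applied to $g=W_s^\sharp$; both pieces are thus $\lesssim\|p(s,\cdot)\|_{L^2}\in L^1(J)$. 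Dominated convergence completes (a). Uniqueness of $u^\sharp$ in $L^2_{\mathrm{loc}}$ also follows from the same polar identity: the difference of two candidates has, for every $R_1<R_2$, annulus-$L^2$-norm $=\|u_1^\sharp-u_2^\sharp\|_{L^2((R_1,R_2)\times S^{n-1})}$, which is $t$-independent and must tend to $0$, hence vanishes. The one genuinely delicate point here is precisely the interchange of the $s$-integration with the limit $t\to\infty$ --- a priori the Lax--Phillips error for $U_0(t-s)\mathbf{p}(s)$ is not uniform in $s$ and $p$ is only $L^1_{\mathrm{loc}}$ in time --- and it is exactly the strong-Huygens localization to the compact set $J$ (followed by the crude energy/unitarity bound as dominating function) that legitimizes it.

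For part (b), with $p\in C_0^\infty$ insert $\mathcal R\mathbf{p}(s)=c_n\,\partial_s^{(n-1)/2}R(p(s,\cdot))$ into the definition of $u^\sharp$, pull $\partial_\sigma^{(n-1)/2}$ out of the now absolutely convergent, compactly supported $s$-integral, and evaluate $\int_{\R}[R(p(s,\cdot))](\sigma+s,\omega)\,\d s=\int_{\R}\int_{\R^n}p(s,x)\,\delta(\sigma+s-x\cdot\omega)\,\d x\,\d s=\int_{\R^n}p(\omega\cdot x-\sigma,x)\,\d x$; together with the elementary identity $(-1)^{(n-1)/2}c_n=c_n^-$ this is exactly \eqref{T_D_13a}. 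For part (c), running the Minkowski estimate above over all of $\R$ instead of $J$ gives $\|u^\sharp\|_{L^2(\R\times S^{n-1})}\le\int_{\R}\|\mathcal R\mathbf{p}(s)\|_{L^2}\,\d s=\int_{\R}\|\mathbf{p}(s)\|_{\mathcal H}\,\d s=2^{-1/2}\|p\|_{L^1(\R;L^2(\R^n))}$, so $p\mapsto u^\sharp$ is a bounded linear map $L^1(\R;L^2(\R^n))\to L^2(\R\times S^{n-1})$ on $p$ spatially supported in $\{|x|\le\rho\}$ (with the obvious localized version if $p$ is only $L^1_{\mathrm{loc}}$ in time), in particular continuous.
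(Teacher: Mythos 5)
Your argument is correct, and it rests on the same two pillars as the paper's proof: the strong Huygens principle, which for a fixed annulus $R_1+t<|x|<R_2+t$ localizes the relevant source times to the compact interval $[-R_2-\rho,\,-R_1+\rho]$, and the free Lax--Phillips profile theorem together with the explicit formula \eqref{TD_8} for $\mathcal R$. The one place where the executions genuinely diverge is how the superposition is handled. The paper packages the localized piece into a single finite-energy datum $\mathbf f=\int_{-R_2-\rho}^{-R_1+\rho}U_0(-\tau)\mathbf p(\tau)\,\d\tau\in\mathcal H$, observes that $\mathbf u(t)=U_0(t)\mathbf f$ on the annulus by Huygens, and then applies the Lax--Phillips theorem exactly once to the free solution $U_0(t)\mathbf f$; no interchange of limit and integral ever arises, and $u^\sharp=(-1)^{(n-1)/2}\mathcal R\mathbf f$ on $(R_1,R_2)$ drops out immediately from \eqref{TD_8}. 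You instead keep the superposition $\int U_0(t-s)\mathbf p(s)\,\d s$, apply Lax--Phillips to each translate separately, and justify the interchange by Minkowski plus dominated convergence, with the unitarity bounds $\lesssim\|p(s,\cdot)\|_{L^2}\in L^1(J)$ as dominating function; this is exactly the delicate point you flag, and your treatment of it is sound. The two definitions of $u^\sharp$ coincide, since $(\mathcal R U_0(-s)\mathbf p(s))(\sigma,\omega)=(\mathcal R\mathbf p(s))(\sigma+s,\omega)$ by translation invariance, so your $\int(\mathcal R\mathbf p(s))(\sigma+s,\omega)\,\d s$ is precisely $(\mathcal R\mathbf f)(\sigma,\omega)$, and your computation in (b) reproduces \eqref{T_D_13a} including the constant $c_n^-=(-1)^{(n-1)/2}c_n$. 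For (c) you give a direct Minkowski/unitarity bound $L^1(\R;L^2)\to L^2$ where the paper takes a Fourier transform in $s$; since the statement does not pin down the topologies, either reading is acceptable. Net effect: the paper's packaging trick is slightly slicker in that it sidesteps the convergence issues entirely, while your version makes them explicit and resolves them correctly.
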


\begin{remark}
For general $p$ as in the theorem, $u^\sharp$ is still given by \r{T_D_13a} but the derivative is in distribution sense; by (b), the result is in  $L_\textrm{\rm loc}^2(\R\times S^{n-1})$. Another way to write  \r{T_D_13a} is 
\be{T_D_14a}
\int u^\sharp (s,\omega)\phi(s)\,\d s = c_n \iint p(t,x) (\partial_s^{(n-1)/2} \phi) (\omega\cdot x-s,x)\,\d t\, \d s, \quad \forall \psi(s)\in C_0^\infty(\R). 
\ee 
\end{remark}

\begin{proof}
Motivated by Theorem~\ref{TD_thm_3}, for fixed $R_1<R_2$, set
\[
\mathbf{f} = \int_{-R_2-\rho}^{-R_1+\rho}U_0(-\tau)\mathbf{p}(\tau)\,\d \tau, \quad \mathbf{v}(t) = U_0(t)\mathbf{f}. 
\]
By Huygens' principle, $\mathbf{v}(t)=\mathbf{u}(t)$ for $R_1+t< |x|<R_2+t$.  Therefore, $\mathbf{v}$ does have an asymptotic wave profile, and    $v^\sharp(s,\omega) = u^\sharp(s,\omega)$ for $R_1<s<R_2$. On the other hand, we have a formula for $v^\sharp$, \r{TD_9} and \r{TD_10} which says
\[
\begin{split}
v^\sharp(s,\omega) &= (-1)^{(n-1)/2}(\mathcal{R}\mathbf{f})(s,\omega) \\
 &= c_n^-\partial_s^{(n-1)/2} \int_{-R_2-\rho}^{-R_1+\rho} \int_{x\cdot\omega=s+\tau} p(x\cdot\omega-s,x)\, \d S_x\, \d\tau.
\end{split}
\]
Then
\[
u^\sharp(s,\omega)|_{R_1<s<R_2} = v^\sharp(s,\omega)= c_n^- \partial_s^{(n-1)/2}\int p(x\cdot\omega-s,x)\, \d x.
\]
Since $R_1<R_2$ are arbitrary, this, combined with Theorem~\ref{TD_thm_3}, proves (a);  and (b) for $p\in C_0^\infty$.

The proof of (c) is straightforward: use \r{T_D_13a} and take Fourier transform w.r.t.\ $s$. In particular, we get that the map $p\to u^\sharp$ can be extended continuously in those spaces. 
\end{proof} 

\begin{remark}
We call $u^\sharp$ in the theorem the asymptotic wave profile of the unique outgoing solution of \r{TD_13}. Note that there are two cases where we defined such profiles: for free solutions in the energy space, see Theorem~\ref{TD_thm_3}, and in Theorem~\ref{thm_TD_AWP} above, where $u$ is in the energy space locally only, and solves \r{TD_13} instead.  
\end{remark}

\subsection{Scattering solutions} 
The scattering solutions $u^-$ and $u^+$ were introduced in section~\ref{sec_3} as the solutions of  \r{TD_14}, and \r{TD_14in}, respectively. Since they involve distributions, not necessarily in the energy spaces (even locally), we proceed as follows. We can think of $(u(t,x;s,\omega), u_t(t,x;s,\omega))$ as distribution in the $(s,\omega)$ variables with values in $\mathcal{H}_\textrm{loc}$. It is more convenient however to do the following. Let $h_j(t)=h(t)t^j/j!$, $j=1,2\dots$, where $h$ is the Heaviside function; and we also set $h_{-1}=\delta$ . Then $h_j'=h_{j-1}$, $j=0,1,2,\dots$. To define $u^-$ eventually, we  solve
\be{TD_15}
(\partial_t^2-\Delta+q(t,x))\Gamma=0,   \quad \Gamma|_{t<-s-\rho }= h_1(t+s-x\cdot\omega)
\ee
first (notice that $  h_1(t+s-x\cdot\omega) $ is locally in the energy space now), set
\be{TD_16}
\Gamma_\textrm{sc} = \Gamma - h_1(t+s-x\cdot\omega),
\ee
compute the asymptotic wave profile $\Gamma^\sharp(s',\omega';s,\omega)$ of $\Gamma_\textrm{sc}$, and differentiate the result twice w.r.t.\ $s$ to get the  analog of the scattering amplitude. In particular, then
\be{TD_16'}
u(t,x;s,\omega) = \partial_s^2\Gamma(t,x;s,\omega), \quad u_\textrm{sc}^-(t,x;s,\omega) = \partial_s^2\Gamma_\textrm{sc}(t,x;s,\omega).
\ee
will be well defined as distributions. 

In a similar way, one can construct the scattering solutions $u^+$ which look like plane waves as $t\to+\infty$, instead of $t\to-\infty$.   They would solve \r{TD_14in}. 
Performing the change of variables $\tilde t=-t$ (time reversal), $\tilde s=-s$, $\tilde \omega=-\omega$, we see that $u^+(t,x;s,\omega) =\tilde u^+(-t,x;-s,-\omega) $, where $\tilde u$ is related to $\tilde q(t,x):=q(-t,x)$. The regularized version, $\Gamma^+$, can be constructed as in \r{TD_15} with the condition $\Gamma^+ = h_1(-t-s+x\cdot\omega)$ for $t>-s+\rho$. The right-hand side of this condition then would be supported outside $\R\times B(0,R)$ for $t>-s+\rho$. Then we define $u^+$ and $u^+_\textrm{sc}$ as in \r{TD_16'}.

By the finite speed of propagation property 
\be{FS}
\supp u^-(\cdot,\cdot;s,\omega) \subset \{t+s-x\cdot\omega\ge0  \}, \quad \supp u^+(\cdot,\cdot;s,\omega) \subset \{t+s-x\cdot\omega\le0  \}.
\ee

Next theorem generalizes Theorem~\ref{thm_TD_AWP}.

\begin{theorem} \label{thm_TD_AWP2} 
Let $p$ be as in Theorem~\ref{thm_TD_AWP}. Set
\[
\mathbf{u}(t): = \int_{-\infty}^t U(t,s) \mathbf{p}(s)\,\d s.
\]
Then $\mathbf{u}\in C(\R;\;\mathcal{H}_\textrm{loc})$ is outgoing, and has 
has an asymptotic wave profile $u^\sharp(s,\omega)$ given by  
\be{A1}
u^\sharp(s,\omega) = c_n^- \partial_s^{(n-1)/2}\int p(t,x) u^+ (t,x;s,\omega)\,\d t\,\d x.
\ee
\end{theorem}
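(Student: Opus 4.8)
The plan is to mirror the proof of Theorem~\ref{thm_TD_AWP}, upgrading the free propagator $U_0$ to the perturbed two-parameter family $U(t,s)$, and replacing the free plane wave by the scattering solution $u^+$. First I would verify that $\mathbf{u}(t)=\int_{-\infty}^t U(t,s)\mathbf{p}(s)\,\d s$ is well-defined in $\mathcal H_\textrm{loc}$ and outgoing: the absolute convergence in ${H}^0_\textrm{loc}$ follows from $p\in L^1((-\infty,a);L^2)$ together with the bound (ii) on $\|U(t,s)\|$ and finite speed of propagation exactly as in Theorem~\ref{TD_thm_3}, and the outgoing property follows because for $t\ll 0$ the potential has not yet influenced the solution (or, more carefully, by pairing with $U_0(t)\mathbf g$ and using that $U(t,s)=U_0(t-s)$ plus a Duhamel correction that is itself outgoing, then sending $t\to-\infty$). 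That $\mathbf u$ solves \r{TD_13} with the perturbed operator replaced by the free one plus the potential term is the content of Duhamel's formula \r{TD_Duh}; one should be slightly careful that here $p$ is a genuine source while the potential acts through $U$, so $\mathbf u$ solves $(\partial_t^2-\Delta)u=p-qu$ in the distributional sense, which is all that is needed.

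Next I would compute the asymptotic wave profile. As in the proof of Theorem~\ref{thm_TD_AWP}, fix $R_1<R_2$ and freeze the solution: by finite speed of propagation and Huygens' principle, on the annular region $R_1+t<|x|<R_2+t$ the solution $\mathbf u(t)$ agrees, for large $t$, with the free evolution $U_0(t)\mathbf f$ of the compactly supported datum $\mathbf f=\int_{-R_2-\rho}^{-R_1+\rho} U(-\tau,?)\mathbf p(\tau)\,\d\tau$ — here one must be careful because $U(t,s)$ is not a one-parameter group, so the correct object is $\mathbf f=\int U(t_*,\tau)\mathbf p(\tau)\,\d\tau$ evolved freely from a fixed time $t_*$, using that outside the support of $q$ the perturbed and free evolutions coincide. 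Hence $\mathbf u$ has an asymptotic wave profile $u^\sharp$ and on $R_1<s<R_2$ it equals $(-1)^{(n-1)/2}(\mathcal R\mathbf f)(s,\omega)$. Expanding $\mathcal R$ via \r{TD_8} and commuting the Radon transform past the integral in $\tau$, the free-profile computation produces $c_n^-\partial_s^{(n-1)/2}\iint$ of $p$ against the Schwartz kernel of the backward perturbed evolution tested against the free plane wave — and that kernel, by the very definition of $u^+$ as the solution of \r{TD_14in}, is precisely $u^+(t,x;s,\omega)$. Letting $R_1,R_2$ range over all of $\R$ gives \r{A1} globally.

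The main obstacle I expect is making the "freezing" argument rigorous when $U(t,s)$ replaces the free group: the clean identity $v^\sharp=(-1)^{(n-1)/2}\mathcal R\mathbf f$ used in Theorem~\ref{thm_TD_AWP} relies on $U_0$ being a group with an explicit translation representation, whereas here I only know that the perturbed flow coincides with the free one outside the cylinder $\R\times B(0,\rho)$. The resolution is to push the "measurement" far enough out: for the annulus $R_1+t<|x|<R_2+t$ with $t$ large, the relevant backward characteristics reach the support of $q$ only over a bounded time interval, so $\mathbf u$ on that annulus is the free evolution of data obtained by flowing $\mathbf p$ forward with the \emph{perturbed} $U$ up to a time after which everything is outside $B(0,\rho)$; the perturbed flow enters only through the finite-time construction of that datum, which is exactly how $u^+$ was defined. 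One then has to check that the distributional identifications in \r{TD_16'}–\r{FS} (the $\partial_s^2$ regularization defining $u^+$) are compatible with the pairing against $p$ appearing in \r{A1}, i.e., that one may integrate by parts in $s$ freely; this is routine given that $p\in C_0^\infty$ in the main case and the general case follows by the continuity statement in Theorem~\ref{thm_TD_AWP}(c). The continuity of $p\mapsto u^\sharp$ here is inherited verbatim from the free case by taking the Fourier transform in $s$.
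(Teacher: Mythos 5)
Your overall strategy (reduce to the free asymptotic-profile theorem and then recognize the resulting kernel as $u^+$ by a duality/time-reversal argument) is viable, but as written there are two genuine gaps, and the paper fills both by a different organization of the argument. First, the ``freezing'' step does not transfer to the perturbed flow the way you use it: strong Huygens fails for $U(t,s)$, so $U(t,\tau)\mathbf{p}(\tau)$ is \emph{not} supported in an annulus of width $O(\rho)$ --- the wave keeps re-interacting with the potential and radiates into the region $R_1+t<|x|<R_2+t$ from \emph{all} source times $\tau<-R_1+\rho$, not just $\tau\in[-R_2-\rho,-R_1+\rho]$. There is also no time ``after which everything is outside $B(0,\rho)$'': only the part of $\mathbf u$ relevant to the annulus propagates freely after some $t_*\ge\rho-R_1$, and the datum at $t_*$ still involves the entire past of $\mathbf p$. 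The paper avoids this entirely by applying Duhamel \r{TD_Duh} to write $\mathbf u$ as the \emph{free} outgoing solution with effective source $\tilde p=p+qu$ (still supported in $|x|\le\rho$ for all time), so that Theorem~\ref{thm_TD_AWP} applies as a black box to a source that is merely $L^1_{\rm loc}$ in time; the freezing argument is then only ever run with $U_0$, where it is legitimate.

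Second, the step you describe as ``by the very definition of $u^+$'' is the heart of the proof and cannot be taken for granted. What the computation produces is $p$ paired against the \emph{transpose} of the perturbed propagator applied to plane-wave data, i.e.\ against the matrix entry $U_{12}(t,x;s,y)$ integrated against $q\,\delta(t+s'-\omega'\cdot x)$; this is not literally the definition of $u^+$. One must use the second relation in \r{TD_UU} to show that $U_{12}$ satisfies $(\partial_s^2-\Delta_y+q(s))U_{12}=0$ in the backward variables, then check that the kernel $K$ so obtained solves $(\partial_s^2-\Delta_y+q)K=-q\,\delta(s+s'-\omega'\cdot y)$ with $K=0$ for $s>-s'+\rho$, and only then conclude $K=u^+_{\rm sc}$ by uniqueness for the time-reversed problem (the paper's \r{A3}--\r{A6}). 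Without this PDE characterization the identification is an assertion, not a proof. Your treatment of well-definedness, the outgoing property, and the density/continuity reduction from $C_0^\infty$ sources is fine and matches the paper.
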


\begin{proof}
By \r{TD_Duh},
\be{A2}
\begin{split}
\mathbf{u}(t) &= \int_{-\infty}^t U_0(t-s) \mathbf{p}(s)\,\d s + 
\int_{-\infty}^t\int_s^t  U_0(t-\sigma)Q(\sigma) U(\sigma,s)  \mathbf{p}(s)\,\d \sigma \,\d s\\
& = \int_{-\infty}^t U_0(t-s) \mathbf{p}(s)\,\d s + 
\int_{-\infty}^t\int_{-\infty}^\sigma U_0(t-\sigma)Q(\sigma) U(\sigma,s)  \mathbf{p}(s)\,\d s\,\d \sigma .
\end{split}
\ee
Then we are in the situation of  Theorem~\ref{thm_TD_AWP}  with $\mathbf{p}(t)$ there replaced by 
\be{A2a}
\tilde{\mathbf{p}}(t):= \mathbf{p}(t)+ \mathbf{p}_1(t), \quad \mathbf{p}_1(t):= Q(t)\int_{-\infty}^t   U(t,s)  \mathbf{p}(s)\,\d s=  Q(t)\mathbf{u}(t). 
\ee
Then $\mathbf{u}$ has an asymptotic wave profile $u^\sharp(s',\omega')$ satisfying 
\be{T_D_13b}
u^\sharp(s',\omega') = c_n^- \partial_s^{(n-1)/2}\int\tilde p(t,x)\delta(t+s'-\omega'\cdot x)\,\d x\,\d t.
\ee
The first term on the right-hand side of \r{A2} is handled by Theorem~\ref{thm_TD_AWP}. We analyze the second term below, which we call $\mathbf{u}_1(t)$. By Theorem~\ref{thm_TD_AWP} again, its asymptotic wave profile is given by 
\be{A2'}
\begin{split}
u_1^\sharp(s',\omega')&= c_n^- \partial_s^{(n-1)/2}\int \Big[ Q(t,x)\int_{-\infty}^t   U(t,x;s,y)  \mathbf{p}(s,y)\,\d s\,\d y\Big]_2\,  {\delta}(t+s'-\omega'\cdot x)\,\d t\,\d x\\
&= c_n^- \partial_s^{(n-1)/2} \int K(s',\omega';s,y) p(s,y)\,\d s\,\d y,
\end{split}
\ee
where the last identity defines $K$, i.e., 
\be{A3}
K(s',\omega';s,y)= \int^{\infty}_s \int q(t,x)   U_{12}(t,x;s,y)   \delta(t+s'-\omega'\cdot x)\,\d x\,\d t.
\ee
By \r{TD_UU}, 
\[
(-\partial_s + A_y'- Q'(s)) U'(t,x;s,y)=0,
\]
where the primes denote  transpose operators  in distribution (not in energy space) sense. This equality can be written also as 
\[
 \begin{pmatrix} 
   -\partial_s  & \Delta_y-q(s)\\ \Id & -\partial_s 
  \end{pmatrix}U'(t,x;s,y) =0 .   
\]
In particular,
\be{A4}
(\partial_s^2-\Delta_y+q(s)) U_{12}(t,x;s,y)=0.
\ee
Differentiate $K$ in \r{A3} to obtain
\[
\partial_s K(s',\omega';s,y)= \int^{\infty}_s \int q(t,x) \partial_s  U_{12}(t,x;s,y)   \delta(t+s'-\omega'\cdot x)\,\d x\,\d t
\]
because  $U_{12}(s,x;s,y)=0$. Differentiate again:
\[
\begin{split}
\partial_s^2 K(s',\omega';s,y)&= \int^{\infty}_s \int q(t,x) \partial_s^2  U_{12}(t,x;s,y)   \delta(t+s'-\omega'\cdot x)\,\d x\,\d t \\
&\qquad -q(s,x)\delta(s+s'-\omega'\cdot y).
\end{split}
\]
Then by \r{A4} and \r{A3},
\be{A5}
(\partial_s^2-\Delta_y+q(s)) K(s',\omega';s,y)= -q(s,x)\delta(s+s'-\omega'\cdot y).
\ee
On the support of the integrand in \r{A3}, we have $t+s'<\rho$, $s<t$. Therefore, 
\be{A6}
 K(s',\omega';s,y)|_{s>-s'+\rho}=0.
\ee
Therefore, $K$ solves \r{A5}, \r{A6}, which  is the same problem solved by $u^+_\textrm{sc}(s',\omega';s,y)$, see \r{TD_14in}. Therefore, $K=u^+_\textrm{sc}$.

Going back to \r{T_D_13b} and \r{A2a}, we see that
\be{A6'}
\begin{split}
u^\sharp(s',\omega') &= c_n^- \partial_s^{(n-1)/2}\int\left(  p(t,x)+p_1(t,x) \right)\delta(t+s'-\omega'\cdot x)\,\d x\,\d t\\
& =  c_n^- \partial_s^{(n-1)/2}\int   p(t,x) )
\left( \delta(t+s'-\omega'\cdot x)+ u^+_\textrm{sc}(s',\omega';t,x)\right) \,\d x\,\d t\\
& =  c_n^- \partial_s^{(n-1)/2}\int   p(t,x) )
  u^+(s',\omega';t,x) \,\d x\,\d t,
\end{split}
\ee
where we used \r{A2'} and the identity $K=u^+_\textrm{sc}$ we just derived. 
\end{proof}

\subsection{The scattering amplitude and the scattering kernel} 
 Let $\Gamma$ solve \r{TD_15}. Since the Cauchy data $(h_1(t+s-x\cdot\omega), h_0(t+s-x\cdot\omega))$, for  say, $t=-s-\rho-1$, is in $\mathcal{H}_\textrm{loc}$, a solution $(\Gamma,\Gamma_t)$ with locally finite energy exists. Then $\Gamma_\textrm{sc}$ is clearly outgoing. It solves the Cauchy problem
\be{TD_18}
(\partial_t^2-\Delta )\Gamma_\textrm{sc}= -q\Gamma,\quad    \Gamma_\textrm{sc}|_{t<-s-\rho}= 0. 
\ee
By Theorem~\ref{thm_TD_AWP}, $\Gamma_\textrm{sc}$ has an asymptotic wave profile $\Gamma_\textrm{sc}^\sharp $ given by 
\[
\begin{split}
\Gamma_\textrm{sc}^\sharp (s',\omega';s,\omega) &= -c_n^-\partial_{s'}^{(n-1)/2} \int q (x\cdot\omega'-s' ,x )\Gamma(x\cdot\omega'-s' ,x;s,\omega)\, \d x\\
  &=  -c_n^-\partial_{s'}^{(n-1)/2} \int q (t ,x )\Gamma(t ,x;s,\omega)\delta(t+s'- x\cdot\omega' )\, \d t\,\d x.
\end{split}
\]
Differentiate twice w.r.t.\ $s$, see  \r{TD_16'}, to get  
\[
u^{-,\sharp}_\textrm{sc} (s',\omega';s,\omega)= -c_n^-\partial_{s'}^{(n-1)/2} \int q (t ,x )u^-(t ,x;s,\omega)\delta(t+s'- x\cdot\omega' )\, \d t\,\d x.
\]
\begin{definition} \label{def_sc_a}
The scattering amplitude $A^\sharp$ is given by
\be{TD_18_def}
A^\sharp(s',\omega';s,\omega) = \int q (t ,x )u^-(t ,x;s,\omega)\delta(t+s'- x\cdot\omega' )\, \d t\,\d x,
\ee
where $u^-$ solves \r{TD_14}. 
\end{definition}

By the finite speed of propagation, $u^-(t,x;s,\omega) = 0$ for $x\cdot\omega>t+s$. Therefore, the integrand vanishes outside of the region $x\cdot(\omega-\omega')\le s-s'$.  The l.h.s.\ has a lower bound $-2\rho$ on $\supp q$; therefore,
\be{TD_19}
\supp A^\sharp \subset \{ s'\le s+\rho|\omega-\omega'|\}\subset \{ s'\le s+2\rho\}.
\ee
Note that $A^\sharp$ and $u^\sharp_\textrm{sc} = -c_n^-\partial_{s'}^{(n-1)/2} A^\sharp$ can be reconstructed  from each other thanks to that support property.

Since the perturbed dynamics is a two-parameter group, we need to generalize the notion of the wave operators and the scattering scattering operator.

\begin{definition}\label{TD_def_SO}
The wave operators $\Omega_-$ and $W_+$ in $\mathcal{H}$ are defined as the strong limits
\[
\Omega_-  = \textrm{s\,-}\lim_{t\to-\infty}U(0,t)U_0(t), \quad
W_+  \mathbf{f}=  \lim_{t\to\infty}U_0(-t)U(t,0)\mathbf{f}; \quad \mathbf{f}\in \textrm{\rm Ran}\;\Omega_-,
\]
if they exist and define continuous  operators. In the latter case, the scattering operator $S$ is defined by
\[
S= W_+\Omega_-. 
\]
\end{definition}  
This definition also makes sense for  $\mathbf{f}\in \mathcal{H}_\textrm{comp}$ with $S\mathbf{f}$ taking values possibly in $\mathcal{H}_\textrm{loc}$.

\begin{theorem}\label{TD_thm_sc1}\  

(a) The wave operator $\Omega_-: \mathcal{H}_\textrm{\rm comp} \to \mathcal{H}$ exists 
and  
\be{TD_20}
U(t,0)\Omega_-\mathbf{f} =  2c_n^-\int_{\R\times S^{n-1}} \boldsymbol{u}^-(t,x;s,\omega) \partial_s^{(n-3)/2} (\mathcal{R}\boldsymbol{f})(s,\omega)\,\d s\,\d\omega. 
\ee

(b)  The wave operator $W_+:  \mathcal{H}\to \mathcal{H}_\textrm{\rm loc}$ exists. 

(c) The scattering operator $S:\mathcal{H}_\textrm{\rm comp}\to \mathcal{H}_\textrm{\rm loc} $ exists.
\end{theorem}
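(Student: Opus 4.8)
The plan is to prove the three parts by Cook's method. The decisive point is that $n$ is odd: the strong Huygens principle makes every time integral that arises either compactly supported or reducible to a short window, and it lets us read off \eqref{TD_20} from uniqueness for the forward Cauchy problem. Fix $\mathbf{f}$ with $\supp\mathbf{f}\subset B(0,a)$, and assume first $\mathbf{f}\in D(A)$ (we extend by density at the end). By property (iii) and $\frac{d}{dt}U_0(t)=AU_0(t)$,
\[
\frac{d}{dt}\bigl(U(0,t)U_0(t)\mathbf{f}\bigr)=U(0,t)\,Q(t)\,U_0(t)\mathbf{f}.
\]
For odd $n$, $\supp U_0(t)\mathbf{f}$ lies in the spherical shell $\{|t|-a\le|x|\le|t|+a\}$, which is disjoint from $\supp q(t,\cdot)\subset B(0,\rho)$ once $|t|>a+\rho$; hence $Q(t)U_0(t)\mathbf{f}=0$ for $t<-(a+\rho)$ and
\[
\Omega_-\mathbf{f}=\lim_{t\to-\infty}U(0,t)U_0(t)\mathbf{f}=\mathbf{f}-\int_{-(a+\rho)}^{0}U(0,t)Q(t)U_0(t)\mathbf{f}\,dt
\]
converges in $\mathcal{H}$, the integral being over a bounded interval on which $\|U(0,t)\|$ is bounded. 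Being a bounded operation on each subspace $\{\supp\mathbf{f}\subset B(0,a)\}$, it extends to $\mathcal{H}_{\mathrm{comp}}$ by density; this proves the first claim of (a).

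For the formula \eqref{TD_20}, denote its right-hand side by $\mathbf{v}(t)$ and verify the identity first when $k:=\mathcal{R}\mathbf{f}$ is smooth and supported in $\{|s|\le S_0\}$. Each $\mathbf{u}^-(\cdot\,;s,\omega)$ solves $(\partial_t^2-\Delta+q)u^-=0$, and by the progressive wave expansion (Proposition~\ref{pr2}) with the $\Gamma$-regularization \eqref{TD_16'}, $\mathbf{u}^-$ depends on $(s,\omega)$ regularly enough that $\mathbf{v}\in C(\mathbb{R};\mathcal{H}_{\mathrm{loc}})$ and solves the same equation, being a superposition of solutions. For $t<-S_0-\rho$ one has $u^-(t,x;s,\omega)=\delta(t+s-x\cdot\omega)$ throughout the domain of integration, so the inversion formula \eqref{TD_9} and \eqref{TD_10} give $\mathbf{v}(t)=U_0(t)\mathbf{f}$ there. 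On the other hand, the same Huygens argument shows $U(\tau,s)U_0(s)\mathbf{f}=U_0(\tau)\mathbf{f}$ whenever $s<\tau<-(a+\rho)$, since the free shell then never meets $\supp q$, so $U_0(\cdot)\mathbf{f}$ solves the perturbed equation and uniqueness applies; letting $s\to-\infty$ gives $U(t,0)\Omega_-\mathbf{f}=U_0(t)\mathbf{f}$ for $t<-(a+\rho)$. Hence $\mathbf{v}$ and $U(\cdot,0)\Omega_-\mathbf{f}$ are two $C(\mathbb{R};\mathcal{H}_{\mathrm{loc}})$ solutions of the perturbed wave equation agreeing for $t\ll0$, so they agree for all $t$ by forward uniqueness; continuity of $\mathbf{f}\mapsto\mathbf{v}$ in the topologies dictated by \eqref{TD_9} then yields \eqref{TD_20} for all $\mathbf{f}\in\mathcal{H}_{\mathrm{comp}}$.

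For (b) and (c), take $\mathbf{f}=\Omega_-\mathbf{g}$ with $\mathbf{g}\in\mathcal{H}_{\mathrm{comp}}$. As before,
\[
\frac{d}{dt}\bigl(U_0(-t)U(t,0)\mathbf{f}\bigr)=-U_0(-t)Q(t)U(t,0)\mathbf{f},
\]
and since $U_0$ is unitary it suffices to prove $\int_0^\infty\|Q(t)U(t,0)\mathbf{f}\|_{\mathcal{H}}\,dt<\infty$, i.e.\ that the local $L^2$-energy of $U(t,0)\mathbf{f}$ over $B(0,\rho)$ is integrable in $t$. Using \eqref{TD_20} write $U(t,0)\mathbf{f}=U_0(t)\mathbf{g}+\mathbf{w}(t)$, where $U_0(t)\mathbf{g}$ is the contribution of the plane waves $\delta(t+s-x\cdot\omega)$; by strong Huygens it is supported in a shell disjoint from $B(0,\rho)$ for $t$ past a finite threshold, hence contributes nothing to the tail, while $\mathbf{w}$ is an outgoing solution — it vanishes for $t\ll0$, cf.\ \eqref{FS} — whose governing source $-q\mathbf{w}-q\,U_0(t)\mathbf{g}$ is spatially supported in $B(0,\rho)$, the $U_0$ term moreover vanishing for $|t|>a+\rho$. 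Representing $\mathbf{w}$ by Theorem~\ref{TD_thm_3} and using odd-dimensional Huygens, one finds for $t$ large that $\mathbf{w}(t)$ restricted to $B(0,\rho)$ is an integral of $U_0(t-\sigma)$ applied to $-q\mathbf{w}(\sigma)$ over only the short window $\sigma\in(t-2\rho,t]$; this produces a delayed-Gronwall inequality for the local energy $g(t)$ of $\mathbf{w}(t)$ over $B(0,4\rho)$, and once this is shown to force $g\in L^1(0,\infty)$ the limit $W_+\mathbf{f}=\lim_{t\to+\infty}U_0(-t)U(t,0)\mathbf{f}$ exists in $\mathcal{H}$ for every $\mathbf{f}\in\operatorname{Ran}\Omega_-$, which is (b). Part (c) is then the composition: $S=W_+\Omega_-$ is defined on $\mathcal{H}_{\mathrm{comp}}$ with values in $\mathcal{H}_{\mathrm{loc}}$.

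The main obstacle is the last step of (b). Part (a) and the formula \eqref{TD_20} are essentially formal once the Huygens observation is made, but because $q$ occupies the whole time–space cylinder the scattered wave $\mathbf{w}$ keeps interacting with the potential and does not simply leave $B(0,\rho)$; turning the self-improving (delayed-Gronwall) structure into genuine integrable decay of the local energy is where the real analysis, and the structural/size hypotheses on $q$ consistent with the paper's setting, enter. A secondary, purely technical matter is verifying that the distributional kernel $u^-(\cdot\,;s,\omega)$ coming from the $\Gamma$-regularization is regular enough in $(s,\omega)$ for the superpositions in \eqref{TD_20} to be well defined and continuous in $\mathbf{f}$; this is supplied by Proposition~\ref{pr2}.
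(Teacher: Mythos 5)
Your part (a) is essentially the paper's argument: strong Huygens for odd $n$ makes $U(0,t)U_0(t)\mathbf{f}$ eventually constant for $t\ll 0$, and \eqref{TD_20} follows by checking that both sides solve the perturbed equation and coincide with $U_0(t)\mathbf{f}$ for $t$ below $-R-\rho$, then invoking uniqueness of the forward Cauchy problem. That part is fine.

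Part (b), however, contains a genuine gap, and it stems from proving the wrong statement. The theorem only asserts that $W_+$ exists as a map $\mathcal{H}\to\mathcal{H}_{\mathrm{loc}}$, i.e.\ the limit $\lim_{t\to\infty}U_0(-t)U(t,0)\mathbf{f}$ is taken in the \emph{local} energy topology. You instead set out to prove convergence in $\mathcal{H}$ via Cook's method, which requires $\int_0^\infty\|Q(t)U(t,0)\mathbf{f}\|_{\mathcal{H}}\,dt<\infty$, i.e.\ integrable decay of the local energy over $B(0,\rho)$. That is a strictly stronger assertion, it is not claimed by the theorem, and it cannot be established in this generality: $q$ occupies the whole time cylinder $\R\times\Omega$ with no smallness, decay in $t$, or sign condition, and the paper itself points out that boundedness of $S$ on $\mathcal{H}$ holds only ``under some conditions.'' Your own write-up concedes the point --- the delayed-Gronwall step is described but never closed (``once this is shown to force $g\in L^1$\dots''), so the argument for (b) is incomplete, and completing it would require hypotheses the theorem does not have. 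The intended proof is much softer: apply Duhamel in the form $\mathbf{1}_{B(0,R)}U_0(-t)U(t,0)=\mathbf{1}_{B(0,R)}+\mathbf{1}_{B(0,R)}\int_0^t U_0(-\sigma)Q(\sigma)U(\sigma,0)\,d\sigma$, and observe that by strong Huygens $\mathbf{1}_{B(0,R)}U_0(-\sigma)Q(\sigma)=0$ once $\sigma>R+\rho$ (data supported in $B(0,\rho)$, propagated freely backward by $\sigma$, lives in a shell disjoint from $B(0,R)$). Hence for $t>R+\rho$ the cut-off expression is independent of $t$, the localized limit is attained trivially for every $\mathbf{f}\in\mathcal{H}$, and no decay estimate is needed; (c) then follows by composing with (a) exactly as you say.
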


\begin{proof}
Choose $\mathbf{f}\in \mathcal{H}_\textrm{\rm comp}$, so that $\mathbf{f}(x)=0$ for $|x|>R$ with some $R>0$. Let $k=\mathcal{R}\mathbf{f}$. Then $k(s,\omega)=0$ for $|s|>R$. For $t<-R-\rho:=t_0$, $U(0,t)U_0(t)\mathbf{f}= U(0,t_0)U_0(t_0)\mathbf{f}$. In particular, the limit defining $\Omega_-\mathbf{f}$ exists trivially and $U(t,0)\Omega_-\mathbf{f} = U(t,t_0)U_0(t_0)\mathbf{f}$. The r.h.s.\ of the latter solves the perturbed wave equation and equals $U_0(t_0)\mathbf{f}= \mathcal{R}^{-1} k(\cdot-t_0,\cdot)$ for $t=t_0$. To prove \r{TD_20}, we need to show that the r.h.s.\ of \r{TD_20}, call it $\mathbf{v}(t)$, has the same initial condition  for $t\le t_0$. 

For $t\le t_0$, $u(t,x;s,\omega)= \delta(t+s-x\cdot\omega)$. Then by \r{TD_9}, 
\[
v(t) = 2c_n^-\int_{\R\times S^{n-1}} \delta(t+s-x\cdot\omega)\partial_s^{(n-3)/2}k(s,\omega)\,\d s\,\d\omega = (\mathcal{R}^{-1}k)_1(\cdot-t,\cdot), 
\]
which proves (a).

To prove the existence of $W_+$ in (b), fix first $R>0$ and let $\mathbf{1}_{B(0,R)}$ be the characteristic function of that ball. By \r{TD_Duh},
\be{TD_Duh'}
\mathbf{1}_{B(0,R)}U_0(-t) U(t,s) = \mathbf{1}_{B(0,R)}U_0(-s)+  \mathbf{1}_{B(0,R)} \int_s^t U_0(-\sigma)Q(\sigma)U(\sigma,s)\,\d \sigma.
\ee
By Huygens' principle, $\mathbf{1}_{B(0,R)} U_0(-\sigma)Q(\sigma)=0$ for $\sigma>R+\rho$. For $t>R+\rho$ then the integral above is independent of $t$ and therefore the strong limit $\mathbf{1}_{B(0,R)}W_+$ exists in a trivial way, defining a unique element in $\mathcal{H}_\textrm{loc}$.

Part (c) follows from (a) and (b). %
\end{proof}

The scattering operator $S$ on $\mathcal{H}$ exists (as a bounded operator) under some conditions, see the references in \cite[sec.~3]{MR1004174}. Then $-c_n^-\partial_{s'}^{(n-1)/2}  A^\sharp (s',\omega'; s,\omega) $ is the Schwartz kernel of $\mathcal{R}(S-\Id)\mathcal{R}^{-1}$. In the general case, we can consider the latter as the Schwartz kernel of the operator mapping asymptotic wave profiles instead of translation representations, see \r{TD_10}, as shown  \cite[Proposition~3.1]{MR1004174}. 

\begin{proposition}[\cite{Cooper_Strauss_79, CooperS}, \cite{MR1004174}]\label{pr_A}
Let $\mathbf{f}\in C_0^\infty\times C_0^\infty$. Let $\mathbf{v}_0(t)= U_0(t)\mathbf{f}$, and let $\mathbf{v}(t)$ be the solution of \r{eq1} which equals $\mathbf{v}_0(t)$ for $t\ll0$. Then we have
\[
v^\sharp(s',\omega') = v_0^\sharp(s',\omega')-c_n^-\partial_{s'}^{(n-1)/2}\int_{\R\times S^{n-1}} A^\sharp (s',\omega'; s,\omega) v_0^\sharp(s,\omega)\,\d s\,\d\omega.
\]
\end{proposition}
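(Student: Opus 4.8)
The plan is to single out the scattered field $\mathbf{v}_{\mathrm{sc}}:=\mathbf{v}-\mathbf{v}_0$, to compute its asymptotic wave profile from the outgoing-solution formula of Theorem~\ref{thm_TD_AWP}, and then to rewrite the resulting integral of $q\,v$ against a plane wave as an integral of $A^\sharp$ against $v_0^\sharp$. The bridge between the two is a representation of $\mathbf{v}$ near $\supp q$ as a superposition of the scattering solutions $u^-$.

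First I would observe that, because $\mathbf{v}(t)=\mathbf{v}_0(t)=U_0(t)\mathbf{f}$ for all sufficiently negative $t$, the function $\mathbf{v}$ is exactly the perturbed solution with Cauchy data $\Omega_-\mathbf{f}$ at $t=0$, i.e.\ $\mathbf{v}(t)=U(t,0)\Omega_-\mathbf{f}$. Hence Theorem~\ref{TD_thm_sc1}(a) applies and gives
\[
\mathbf{v}(t)=2c_n^-\int_{\R\times S^{n-1}}\mathbf{u}^-(t,x;s,\omega)\,\partial_s^{(n-3)/2}(\mathcal{R}\mathbf{f})(s,\omega)\,\d s\,\d\omega .
\]
Splitting $\mathbf{u}^-=\bigl(\delta(t+s-x\cdot\omega),\,\partial_t\delta(t+s-x\cdot\omega)\bigr)+\mathbf{u}^-_{\mathrm{sc}}$ and using the inversion formula \r{TD_9}, the incident part integrates to $U_0(t)\mathbf{f}=\mathbf{v}_0(t)$; therefore $\mathbf{v}_{\mathrm{sc}}$ vanishes for $t\ll 0$, is outgoing, and, taking first components,
\[
v(t,x)=2c_n^-\int_{\R\times S^{n-1}}u^-(t,x;s,\omega)\,\partial_s^{(n-3)/2}(\mathcal{R}\mathbf{f})(s,\omega)\,\d s\,\d\omega ,
\]
which is the representation we need; for fixed parameters the integrand of $A^\sharp$ below is confined to a compact region of spacetime (by \r{FS} together with $\supp q\subset\R\times\Omega$), so all integrations are over compact sets.

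Next I would compute $v^\sharp_{\mathrm{sc}}$ and match. On one side, $v_{\mathrm{sc}}$ solves $(\partial_t^2-\Delta)v_{\mathrm{sc}}=-q\,v$ with vanishing Cauchy data for $t\ll0$, so Theorem~\ref{thm_TD_AWP} gives
\[
v^\sharp_{\mathrm{sc}}(s',\omega')=-c_n^-\partial_{s'}^{(n-1)/2}\int q(t,x)\,v(t,x)\,\delta(t+s'-x\cdot\omega')\,\d t\,\d x ,
\]
and $v^\sharp=v_0^\sharp+v^\sharp_{\mathrm{sc}}$ by additivity of the asymptotic wave profile ($\mathbf{v}_0$ being a free solution with profile $v_0^\sharp$, $\mathbf{v}_{\mathrm{sc}}$ an outgoing solution with profile $v^\sharp_{\mathrm{sc}}$). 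On the other side, I would insert the representation of $v$, interchange the $(t,x)$- and $(s,\omega)$-integrations, and recognise
\[
\int q(t,x)\,u^-(t,x;s,\omega)\,\delta(t+s'-x\cdot\omega')\,\d t\,\d x=A^\sharp(s',\omega';s,\omega)
\]
from Definition~\ref{def_sc_a}; this writes $v^\sharp_{\mathrm{sc}}$ as $\partial_{s'}^{(n-1)/2}$ applied to an integral of $A^\sharp$ against $\partial_s^{(n-3)/2}(\mathcal{R}\mathbf{f})$. Integrating by parts in $s$ (no boundary terms, since $\mathcal{R}\mathbf{f}$ has compact support in $s$) and substituting $\mathcal{R}\mathbf{f}=(-1)^{(n-1)/2}v_0^\sharp$ then reduces the constants and derivative orders to $v^\sharp_{\mathrm{sc}}(s',\omega')=-c_n^-\partial_{s'}^{(n-1)/2}\int A^\sharp(s',\omega';s,\omega)\,v_0^\sharp(s,\omega)\,\d s\,\d\omega$, which is the assertion. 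An equivalent route is to write $\mathbf{v}_{\mathrm{sc}}(t)=\int_{-\infty}^t U(t,\sigma)\bigl(0,-q(\sigma,\cdot)v_0(\sigma,\cdot)\bigr)\,\d\sigma$ and apply Theorem~\ref{thm_TD_AWP2}, which puts $u^+$ in the integrand; the identity $\int q\,\delta(t+s-x\cdot\omega)\,u^+(t,x;s',\omega')\,\d t\,\d x=A^\sharp(s',\omega';s,\omega)$, i.e.\ the $q_1=0$ case of Proposition~\ref{pr1}, then plays the role of Definition~\ref{def_sc_a}.

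Two matters need care. The routine, if fiddly, one is keeping track of the powers of $c_n^-$ and of the orders of $\partial_s$ and $\partial_{s'}$ through the integrations by parts so that the expression collapses to exactly the kernel $-c_n^-\partial_{s'}^{(n-1)/2}A^\sharp$. The genuine analytic point is commuting the $t\to+\infty$ far-field limit defining the asymptotic wave profile with the $(s,\omega)$-integration, i.e.\ proving that the profile of $\int\mathbf{u}^-_{\mathrm{sc}}(\cdot,\cdot;s,\omega)(\cdots)\,\d s\,\d\omega$ equals $\int u^{-,\sharp}_{\mathrm{sc}}(\cdot,\cdot;s,\omega)(\cdots)\,\d s\,\d\omega$. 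I would dispatch this exactly as in Proposition~\ref{pr1} and Theorem~\ref{thm_TD_AWP2}: apply the continuity statement of Theorem~\ref{thm_TD_AWP}(c) to the source $p=-q\,v$, using that $p$ is compactly supported in spacetime and that $\mathcal{R}\mathbf{f}$ confines $s$ to a compact set, so that the superposition is a genuine locally finite energy outgoing solution and the interchange is legitimate.
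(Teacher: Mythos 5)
Your proposal follows the same route as the paper's own (one‑sentence) proof: represent $\mathbf{v}(t)=U(t,0)\Omega_-\mathbf{f}$ via \r{TD_20}, apply Duhamel's formula \r{TD_Duh} to split off the outgoing scattered part with source $-qv$, take its asymptotic wave profile by Theorem~\ref{thm_TD_AWP}, and recognize $A^\sharp$ from Definition~\ref{def_sc_a} (your alternative route through Theorem~\ref{thm_TD_AWP2} and $u^+$ is the same computation seen from the other side). The one step you defer --- collapsing the density $2c_n^-\partial_s^{(n-3)/2}(\mathcal{R}\mathbf{f})$ inherited from \r{TD_20} into $v_0^\sharp$ via \r{TD_10} --- is likewise not carried out in the paper, which simply cites \cite{MR1004174}, so the match is complete at the level of detail the paper provides.
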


The proof of the proposition is done by taking the asymptotic wave profile of \r{TD_20} applying Duhamel's formula \r{TD_Duh} first.

\section{A weighted Radon transform}

We recall the definition of the weighted Euclidean Radon transform  
\begin{equation*}
    R_\mu f (p,\omega) = \int_{x\cdot \omega=p} \mu(x,\omega) f(x) dx = \int_{\omega^\perp} \mu (p\omega + y, \omega) f(p\omega + y) dy.
\end{equation*}
where $\mu\in C^\infty(\mathbb{R}\times \mathbb{S}^{n-1})$ is a weight. We will use the next result:
\begin{theorem}\label{w_Radon_t_est}
    Let $\Omega$ be an open, bounded set and $\mu\in C^\infty(\mathbb{R}\times \mathbb{S}^{n-1})$. Then, there is a constant $C_\Omega>0$, which depends only on $\Omega$, such that 
    \begin{equation*}
        \|R_\mu f\|_{L^2(\mathbb{S}_\omega^{n-1}; H^{(n-1)/2}(\mathbb{R}_\sigma))} \leq C_\Omega \sup_{\omega\in \mathbb{S}^{n-1}} \|\mu(\cdot,\omega)\|_{C^{2n - 2}(\bar\Omega)} \|f\|_{L^2(\Omega)}
    \end{equation*}
    for all $f\in C_0^\infty(\Omega)$.
\end{theorem}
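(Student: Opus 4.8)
\emph{Proof sketch.} The plan is to pass to Fourier variables via the projection--slice identity, which converts the $H^{(n-1)/2}$--estimate into a weighted $L^2$--estimate for the full Fourier transform of $x\mapsto\mu(x,\omega)f(x)$, and then to exploit the smoothing obtained by integrating over \emph{all} directions $\omega$ simultaneously. After multiplying $\mu$ by a fixed cutoff that equals $1$ near $\bar\Omega$ (which changes $\sup_\omega\|\mu(\cdot,\omega)\|_{C^{2n-2}}$ only by an $\Omega$--dependent factor), we may assume that $x\mapsto\mu(x,\omega)$ is supported in a fixed compact $K=K(\Omega)$ for every $\omega$; write $m_0:=\sup_\omega\|\mu(\cdot,\omega)\|_{C^{2n-2}(K)}$ and $m:=(n-1)/2$, which is an integer since $n$ is odd. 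For fixed $\omega$ one has $\widehat{R_\mu f(\cdot,\omega)}(\xi)=\widehat{\mu(\cdot,\omega)f}(\xi\omega)$ (one--dimensional transform on the left, $n$--dimensional on the right), so that, up to a fixed constant,
\[
\|R_\mu f\|_{L^2(\mathbb{S}^{n-1};H^m(\R))}^2 = c_n\int_{\mathbb{S}^{n-1}}\int_\R (1+\xi^2)^m\,\bigl|\widehat{\mu(\cdot,\omega)f}(\xi\omega)\bigr|^2\,\d\xi\,\d\omega.
\]

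Next I would write $\widehat{\mu(\cdot,\omega)f}=c_n'\,\widehat{\mu(\cdot,\omega)}*\widehat f$ as a convolution on $\R^n$ (this is where compact support of $\mu(\cdot,\omega)$ is used). Integrating by parts and using the compact support and $2n-2$ even, $|\widehat{\mu(\cdot,\omega)}(\eta)|\le C_\Omega m_0\langle\eta\rangle^{-(2n-2)}$; since $2n-2>n$ for $n\ge 3$, this already yields $\|\widehat{\mu(\cdot,\omega)}\|_{L^1(\R^n)}\le C_\Omega m_0$. By Cauchy--Schwarz,
\[
\bigl|\widehat{\mu(\cdot,\omega)f}(\xi\omega)\bigr|^2\le C_\Omega m_0\int_{\R^n}\bigl|\widehat{\mu(\cdot,\omega)}(\xi\omega-\zeta)\bigr|\,|\widehat f(\zeta)|^2\,\d\zeta.
\]
Substituting and interchanging the order of integration reduces the whole estimate to a \emph{uniform in $\zeta\in\R^n$} bound for the kernel
\[
\Phi(\zeta):=\int_{\mathbb{S}^{n-1}}\int_\R (1+\xi^2)^m\,\bigl|\widehat{\mu(\cdot,\omega)}(\xi\omega-\zeta)\bigr|\,\d\xi\,\d\omega,
\]
namely to $\sup_\zeta\Phi(\zeta)\le C_\Omega m_0$, after which $\|R_\mu f\|_{L^2(\mathbb{S}^{n-1};H^m(\R))}^2\le C_\Omega m_0^2\|\widehat f\|_{L^2}^2=C_\Omega m_0^2\|f\|_{L^2(\Omega)}^2$.

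The step I expect to be the crux is the bound for $\Phi$. Using $|\widehat{\mu(\cdot,\omega)}(\xi\omega-\zeta)|\le C_\Omega m_0\langle\xi\omega-\zeta\rangle^{-(2n-2)}$ and noting that both $(1+\xi^2)^m=(1+|\eta|^2)^m$ and $\langle\xi\omega-\zeta\rangle^{-(2n-2)}=\langle\eta-\zeta\rangle^{-(2n-2)}$ depend on $\eta=\xi\omega$ alone, I would pass from the $(\xi,\omega)$--integral to an integral over $\eta\in\R^n$ (Jacobian $|\eta|^{-(n-1)}$, plus a harmless factor $2$ for $\pm\xi$):
\[
\Phi(\zeta)\le C_\Omega m_0\int_{\R^n}\langle\eta\rangle^{2m}\,|\eta|^{-(n-1)}\,\langle\eta-\zeta\rangle^{-(2n-2)}\,\d\eta.
\]
Since $2m=n-1$, the weight $\langle\eta\rangle^{2m}|\eta|^{-(n-1)}$ is bounded for $|\eta|\ge 1$ and is $\le C|\eta|^{-(n-1)}$ (integrable over $|\eta|\le 1$) near the origin; splitting the integral at $|\eta|=1$ and using $\|\langle\cdot\rangle^{-(2n-2)}\|_{L^1(\R^n)}<\infty$ (again $2n-2>n$) gives a bound independent of $\zeta$, completing the proof.

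The genuinely delicate point here is conceptual rather than computational: one cannot carry out the estimate slice by slice in $\omega$, because for a single direction the half--derivative gain of the Radon transform fails — a fixed--$\omega$ analogue of $\Phi$ is unbounded, growing like $|\zeta|^{n-1}$ when $\zeta$ is nearly parallel to $\omega$. The gain is recovered only by integrating the $p$--frequency $\xi$ and the direction $\omega$ \emph{together}, which produces the decisive factor $|\eta|^{-(n-1)}$; the $C^{2n-2}$--regularity of $\mu$ is precisely what makes $\widehat{\mu(\cdot,\omega)}\in L^1(\R^n)$ and the kernel $\Phi$ finite uniformly in $\zeta$.
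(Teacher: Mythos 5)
Your argument is correct, and it proves the stated estimate by a genuinely different route from the paper. The paper proceeds via a $T^*T$ argument: it bounds $\|R_\mu f\|^2_{L^2(\mathbb{S}^{n-1};H^{(n-1)/2})}$ by pairings $(R_\mu^*\partial_p^{2l}R_\mu f,f)$, identifies $R_\mu^* R_\nu$ as a pseudodifferential operator of order $1-n$ with an explicit amplitude, and then controls the resulting oscillatory integrals by splitting into low and high frequencies and invoking H\"ormander's $L^2$-boundedness results for operators with non-smooth amplitudes (Lemma 18.1.12 and Theorem 18.1.11' of \cite{Hormander3}), together with an auxiliary lemma on weighted Fourier-type operators. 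You instead work directly on the forward operator through the projection--slice identity, convert the problem to a Schur-type bound $|\widehat{\mu f}(\xi\omega)|^2\le \|\widehat\mu\|_{L^1}\,(|\widehat\mu|*|\widehat f|^2)(\xi\omega)$, and gain the $(n-1)/2$ derivatives from the Jacobian factor $|\eta|^{-(n-1)}$ when the $(\xi,\omega)$ integration is recombined into an $\eta$-integral over $\R^n$; the kernel bound $\sup_\zeta\Phi(\zeta)\le C_\Omega m_0$ then closes the argument, and your remark that the gain is only available after integrating in $\omega$ is exactly the right diagnosis. Your route is more elementary (no $\Psi$DO calculus) and in fact shows that $C^{n+1}(\bar\Omega)$ regularity of $\mu(\cdot,\omega)$, uniformly in $\omega$ and with no smoothness in $\omega$, already suffices, which is stronger than the stated $C^{2n-2}$ bound; the paper's $\Psi$DO formulation, on the other hand, is closer in spirit to the standard sharp stability theory for weighted Radon transforms and localizes more readily. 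One cosmetic point you share with the paper: the cutoff step really requires control of $\mu$ on a neighborhood of $\bar\Omega$ rather than on $\bar\Omega$ itself, which is harmless here since $\mu$ is globally smooth.
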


To prove this, we need the following auxiliary lemmas.

\begin{lemma}\label{L2_norm_wF_trans}
    Let $\Omega\subset \mathbb{R}^n$ be an open, bounded set and $\nu$ be a function on $\mathbb{R}^n\times \mathbb{R}^n$ such that for any fixed $\xi\in \mathbb{R}^n$, $\nu(\cdot,\xi) \subset C^{n}(\bar\Omega)$ with $\mathrm{supp\,}\nu(\cdot,\xi) \subset \Omega$. Then, the operator $V$, given by
    \begin{equation*}
        V: f \rightarrow \int_{\mathbb{R}^n} e^{-ix\xi} \nu(x,\xi) f(x)dx,
    \end{equation*}
    satisfies
    \begin{equation*}
        \|V\|_{L^2(\Omega) \rightarrow L^2(\mathbb{R}^n)} \leq C_\Omega \sup_{\xi\in \mathbb{R}^n} \|\nu(\cdot,\xi)\|_{C^{n}(\bar\Omega)}.
    \end{equation*}
\end{lemma}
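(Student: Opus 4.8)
The plan is to transfer the estimate to the Fourier side and reduce it to Plancherel's theorem, the point being that for each fixed $\xi$ the function $\nu(\cdot,\xi)$ lies in $H^n(\mathbb{R}^n)$ with a norm controlled, uniformly in $\xi$, by $\sup_\xi\|\nu(\cdot,\xi)\|_{C^n(\bar\Omega)}=:A$. First I would record this embedding: since $\supp\nu(\cdot,\xi)\subset\Omega$ and $\Omega$ is bounded, $\|\nu(\cdot,\xi)\|_{H^n(\mathbb{R}^n)}^2=\sum_{|\alpha|\le n}\|\partial^\alpha\nu(\cdot,\xi)\|_{L^2(\Omega)}^2\le C_\Omega\|\nu(\cdot,\xi)\|_{C^n(\bar\Omega)}^2$, so on the Fourier side $\int_{\mathbb{R}^n}\langle\zeta\rangle^{2n}|\widehat{\nu(\cdot,\xi)}(\zeta)|^2\,d\zeta\le C_\Omega A^2$ for every $\xi$.

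Next, observe that $Vf(\xi)$ is exactly the Fourier transform of the product $\nu(\cdot,\xi)f$ evaluated at the point $\xi$, so by the convolution theorem $Vf(\xi)$ equals, up to a fixed dimensional constant, $(\widehat{\nu(\cdot,\xi)}*\widehat f)(\xi)=\int_{\mathbb{R}^n}\widehat{\nu(\cdot,\xi)}(\xi-\eta)\widehat f(\eta)\,d\eta$; it is harmless to assume $f\in C_0^\infty(\Omega)$ and pass to the limit afterwards, so all integrals are absolutely convergent. I would then insert $1=\langle\xi-\eta\rangle^{n}\cdot\langle\xi-\eta\rangle^{-n}$ and apply the Cauchy--Schwarz inequality in $\eta$: the first factor becomes $\big(\int\langle\zeta\rangle^{2n}|\widehat{\nu(\cdot,\xi)}(\zeta)|^2\,d\zeta\big)^{1/2}\le C_\Omega A$ by the previous step, leaving the pointwise bound $|Vf(\xi)|^2\le C_\Omega A^2\int_{\mathbb{R}^n}\langle\xi-\eta\rangle^{-2n}|\widehat f(\eta)|^2\,d\eta$.

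Finally, integrating in $\xi$ and using Fubini, the inner integral $\int_{\mathbb{R}^n}\langle\xi-\eta\rangle^{-2n}\,d\xi=\|\langle\cdot\rangle^{-2n}\|_{L^1(\mathbb{R}^n)}$ is finite precisely because $2n>n$; combined with Plancherel ($\|\widehat f\|_{L^2}\simeq\|f\|_{L^2}$) this gives $\|Vf\|_{L^2(\mathbb{R}^n)}\le C_\Omega A\|f\|_{L^2(\Omega)}$ after absorbing dimensional constants into $C_\Omega$. The one point that needs care, and which I expect to be the only real subtlety, is the exponent in the weight: the naive pointwise decay $|\widehat{\nu(\cdot,\xi)}(\zeta)|\lesssim\langle\zeta\rangle^{-n}$ is borderline non-integrable, so one must keep the $L^2$-based ($H^n$) bound on $\nu(\cdot,\xi)$ and let Cauchy--Schwarz produce the \emph{squared} weight $\langle\cdot\rangle^{-2n}$, which is integrable; the $\xi$-dependence itself causes no difficulty since we never differentiate or integrate by parts in $\xi$ and every estimate above is uniform in $\xi$.
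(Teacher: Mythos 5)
Your proof is correct, but it follows a genuinely different route from the paper. The paper's proof stays in physical space: it writes $e^{-iy\xi}f(y)$ as an $n$-fold mixed derivative of its iterated antiderivative, integrates by parts $n$ times onto $\nu(\cdot,\xi)$ (the compact support of $\nu(\cdot,\xi)$ inside $\Omega$ killing boundary terms), and then bounds the remaining factor, a Fourier transform of the truncated functions $f\chi_{\{z:\,z_k\le x_k\}}$, via Plancherel. You instead work on the frequency side: you view $Vf(\xi)$ as $\widehat{\nu(\cdot,\xi)f}(\xi)$, apply the convolution theorem, use the uniform bound $\|\nu(\cdot,\xi)\|_{H^n(\mathbb{R}^n)}\le C_\Omega\sup_\xi\|\nu(\cdot,\xi)\|_{C^n(\bar\Omega)}$ (valid because the support is a compact subset of the bounded set $\Omega$), and close with a weighted Cauchy--Schwarz plus Fubini/Plancherel, the integrability of $\langle\cdot\rangle^{-2n}$ doing the work. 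Both arguments exploit exactly $n$ derivatives of $\nu$ in $x$ to gain decay, and both reduce to Plancherel at the end, but the mechanisms differ: the paper's is more elementary (no Sobolev norms or convolution theorem) and purely physical-space, while yours is a Schur-type frequency estimate in which the uniformity in the frozen variable $\xi$ is completely transparent, and it would in fact go through with only about $n/2+\epsilon$ derivatives of $\nu$; it also avoids the paper's step of pulling a supremum over $x$ through the $L^2_\xi$ norm. Your closing remark correctly identifies the one delicate point, namely that one must keep the $L^2$-based bound so that Cauchy--Schwarz produces the integrable weight $\langle\cdot\rangle^{-2n}$ rather than the borderline pointwise decay $\langle\cdot\rangle^{-n}$.
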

\begin{proof}
    Throughout this proof, $C_\Omega$ will serve as a universal positive constant depending only on $\Omega$, which may vary from line to line.  Let $f\in C_0^\infty(\Omega)$, then
    \begin{equation*}
       |Vf(\xi)| = \left|\int_{\mathbb{R}^n} D_{x_n}\cdots D_{x_1} \left(\int_{-\infty}^{x_1}\cdots\int_{-\infty}^{x_n}f(y)e^{-iy\xi}dy_{n} \cdots dy_{1} \right) \nu(x,\xi) dx\right|.
    \end{equation*}
    By integration by parts, we obtain
    \begin{multline*}
        |Vf(\xi)| \leq \int_{\mathbb{R}^n}  \left|\int_{-\infty}^{x_1}\cdots\int_{-\infty}^{x_n}f(y)e^{-iy\xi}dy_{n} \cdots dy_{1} \right| |D_{x_n}\cdots D_{x_1} \nu(x,\xi)| dx\\
        \leq C_\Omega \sup_{\xi\in \mathbb{R}^n} \|\nu(\cdot,\xi)\|_{C^{n}(\bar\Omega)} \sup_{x\in \mathbb{R}^n} \left|\int_{\mathbb{R}^n} e^{-iy\xi}f(y) \chi_{\{z: z_k\leq x_k\}}(y) dy\right|,
    \end{multline*}
    where $\chi_A$ is the indicator function for a set $A$. Therefore, we estimate
    \begin{align*}
        \|Vf\|_{L^2(\mathbb{R}^n)} &\leq C_\Omega \sup_{\xi\in \mathbb{R}^n} \|\nu(\cdot,\xi)\|_{C^{n}(\bar\Omega)} \sup_{x\in \mathbb{R}^n} \|\mathcal{F}[f\chi_{\{z: z_k\leq x_k\}}]\|_{L^2(\Omega)}\\
        & \leq C_\Omega \sup_{\xi\in \mathbb{R}^n} \|\nu(\cdot,\xi)\|_{C^{n}(\bar\Omega)} \sup_{x\in \mathbb{R}^n} \|f\chi_{\{z: z_k\leq x_k\}}\|_{L^2(\Omega)}\\
        & \leq C_\Omega \sup_{\xi\in \mathbb{R}^n} \|\nu(\cdot,\xi)\|_{C^{n}(\bar\Omega)} \|f\|_{L^2(\Omega)}.
    \end{align*}
    This completes the proof.
\end{proof}

\begin{lemma}\label{est_scal_prod}
    Let $\omega\subset \mathbb{R}^n$ be an open bounded set and $\mu$, $\nu\in C^\infty(\mathbb{R}\times \mathbb{S}^{n-1})$. Then, there exists a constant  $C_\Omega>0$, which depend only on $\Omega$, such that 
    \begin{equation*}
        |(R_\mu^* R_\nu D^\gamma f, f)_{L^2(\mathbb{R}^n)}| \leq C_\Omega \sup_{\omega\in \mathbb{S}^{n-1} }\|\mu(\cdot, \omega)\|_{C^{n+1}(\bar\Omega)}  \sup_{\omega\in \mathbb{S}^{n-1}} \left\|\nu(\cdot, \omega) \right\|_{C^{n -1 + |\gamma|}(\bar\Omega)}\|f\|_{L^2(\Omega)}.
    \end{equation*}
    for any $f\in C_0^\infty(\Omega)$ and multi-index any $\gamma$ with $0\leq|\gamma|\leq n-1$.
\end{lemma}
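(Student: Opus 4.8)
\emph{Proof strategy.} The plan is to pass to the Fourier side in the offset variable $p$ and reduce everything to Lemma~\ref{L2_norm_wF_trans}. By the definition of the adjoint, $(R_\mu^* R_\nu D^\gamma f, f)_{L^2(\mathbb{R}^n)}=(R_\nu D^\gamma f,\,R_\mu f)_{L^2(\mathbb{R}_p\times\mathbb{S}^{n-1}_\omega)}$. Fixing $\omega$ and applying Parseval in $p$ — with $\hat h(\xi)=\int e^{-ix\cdot\xi}h(x)\,dx$, so that the weighted Fourier slice identity reads $\widehat{R_\kappa g}(\sigma,\omega)=\widehat{\kappa(\cdot,\omega)g}(\sigma\omega)$, the one–dimensional transform in $p$ on the left and the $n$–dimensional transform on the right — and then integrating by parts in $x$ to move $D^\gamma$ off $f$, a Leibniz expansion shows that $\widehat{R_\nu D^\gamma f}(\sigma,\omega)$ is a finite linear combination, over multi-indices $\beta\le\gamma$, of $\omega^\beta\sigma^{|\beta|}\,\widehat{(D_x^{\gamma-\beta}\nu(\cdot,\omega))f}(\sigma\omega)$ (each derivative hitting the phase $e^{-i\sigma x\cdot\omega}$ produces a factor proportional to $\sigma\omega_j$). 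Thus, up to a universal constant, $(R_\mu^* R_\nu D^\gamma f, f)$ is a sum over $\beta\le\gamma$ of the integrals $\int_{\mathbb{S}^{n-1}}\int_{\mathbb{R}}\omega^\beta\sigma^{|\beta|}\,\widehat{(D_x^{\gamma-\beta}\nu(\cdot,\omega))f}(\sigma\omega)\,\overline{\widehat{\mu(\cdot,\omega)f}(\sigma\omega)}\,d\sigma\,d\omega$.

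Next I split $\int_{\mathbb R}d\sigma$ at $|\sigma|=1$. For $|\sigma|\le1$ the integrand is bounded pointwise using the compact support of $f$: $|\widehat{(D_x^{\gamma-\beta}\nu(\cdot,\omega))f}(\sigma\omega)|\le C_\Omega\|\nu(\cdot,\omega)\|_{C^{|\gamma-\beta|}(\bar\Omega)}\|f\|_{L^2}$ and $|\widehat{\mu(\cdot,\omega)f}(\sigma\omega)|\le C_\Omega\|\mu(\cdot,\omega)\|_{C^0(\bar\Omega)}\|f\|_{L^2}$, so the $|\sigma|\le1$ piece is at most $C_\Omega\sup_\omega\|\nu(\cdot,\omega)\|_{C^{|\gamma|}(\bar\Omega)}\sup_\omega\|\mu(\cdot,\omega)\|_{C^0(\bar\Omega)}\|f\|_{L^2}^2$, dominated by the right-hand side of the lemma. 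For $|\sigma|>1$ I change variables $\xi=\sigma\omega$ separately on the two rays $\pm\sigma>1$; since $(\sigma,\omega)\mapsto\sigma\omega$ has Jacobian $|\xi|^{-(n-1)}$, the $\beta$-term becomes $\int_{|\xi|>1}m_\beta(\xi)\,\widehat{(D_x^{\gamma-\beta}\nu(\cdot,\omega(\xi)))f}(\xi)\,\overline{\widehat{\mu(\cdot,\omega(\xi))f}(\xi)}\,d\xi$ with $\omega(\xi)=\pm\xi/|\xi|$ and $|m_\beta(\xi)|\le|\xi|^{|\beta|-(n-1)}\le1$ on $|\xi|>1$ — the last inequality being exactly where the hypothesis $|\gamma|\le n-1$ is used.

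To bound these I distinguish two regimes in $|\beta|$. If $|\beta|<(n-2)/2$ (possible only because $n\ge3$), then $|\xi|^{|\beta|-(n-1)}\in L^2(|\xi|>1)$, so I bound the $\nu$-factor pointwise by $C_\Omega\|\nu(\cdot,\omega)\|_{C^{|\gamma-\beta|}(\bar\Omega)}\|f\|_{L^2}$ and apply Cauchy–Schwarz in $\xi$, pairing $m_\beta$ against the $\mu$-factor; the latter equals $\|Vf\|_{L^2}$ for the operator $V$ of Lemma~\ref{L2_norm_wF_trans} with amplitude $(x,\xi)\mapsto\chi(x)\mu(x,\pm\xi/|\xi|)$, where $\chi$ is a fixed cutoff equal to $1$ near $\bar\Omega$, and Lemma~\ref{L2_norm_wF_trans} bounds it by $C_\Omega\sup_\omega\|\mu(\cdot,\omega)\|_{C^n}\|f\|_{L^2}$; this gives an estimate with $C^{|\gamma|}(\bar\Omega)$ for $\nu$ and $C^n(\bar\Omega)$ for $\mu$. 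If $|\beta|\ge(n-2)/2$ (hence $|\beta|\ge1$, since $n\ge3$), I only use $|m_\beta|\le1$ and Cauchy–Schwarz together with Lemma~\ref{L2_norm_wF_trans} applied to both factors, costing $C_\Omega\sup_\omega\|\nu(\cdot,\omega)\|_{C^{n+|\gamma-\beta|}}\sup_\omega\|\mu(\cdot,\omega)\|_{C^n}\|f\|_{L^2}^2$; since $|\beta|\ge1$ forces $|\gamma-\beta|\le|\gamma|-1$, this keeps the $\nu$-order at $n+|\gamma-\beta|\le n-1+|\gamma|$. Summing over $\beta\le\gamma$ and absorbing the norms into $\sup_\omega\|\nu(\cdot,\omega)\|_{C^{n-1+|\gamma|}(\bar\Omega)}$ and $\sup_\omega\|\mu(\cdot,\omega)\|_{C^{n+1}(\bar\Omega)}$ (using $n-1\ge2$) yields the claim.

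The main obstacle is precisely this bookkeeping: the threshold at $|\beta|=(n-2)/2$ must be placed so that in the regime where $D^\gamma$ falls mostly on $\nu$ — many potential derivatives on $\nu$, few phase factors — the homogeneous kernel $|\xi|^{|\beta|-(n-1)}$ is still square-integrable at infinity, while in the complementary regime the derivative count on $\nu$ automatically drops by at least one; the two hypotheses $n\ge3$ and $|\gamma|\le n-1$ are exactly what make these two regimes cover every $\beta$ (the former makes $|\xi|^{-(n-1)}$ square-integrable away from the origin, the latter keeps $|\xi|^{|\beta|-(n-1)}$ bounded on $|\xi|>1$). A secondary technical point is that Lemma~\ref{L2_norm_wF_trans} requires a compactly supported amplitude, which is why the fixed cutoff $\chi$ is inserted — all resulting constants and the possibly enlarged domain depend only on $\Omega$. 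Everything else — the Leibniz expansion, the two elementary $L^1$-in-$x$ bounds, and the polar change of variables — is routine.
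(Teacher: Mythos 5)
Your proof is correct, and it takes a genuinely different, more self-contained route than the paper's. The paper starts from the composition formula for weighted Radon transforms ($R_\mu^*R_\nu$ is a $\Psi\mathrm{DO}$ of order $1-n$ with explicit amplitude, quoting Proposition~5.8.3 of \cite{SU-book}), distributes $D^\gamma$ by integration by parts inside the resulting oscillatory integral, and splits frequencies with a cutoff: the low-frequency piece is handled by Schur's lemma (Lemma~18.1.12 of \cite{Hormander3}) and the high-frequency piece by Theorem~18.1.11$'$ of \cite{Hormander3} for the $x$-amplitude containing $\mu$, combined with Lemma~\ref{L2_norm_wF_trans} for the $y$-amplitude containing $\nu$. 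You instead rebuild the same oscillatory integral by hand from the Fourier slice theorem and Parseval in the offset variable, then pass to polar coordinates; your low-frequency piece is trivial, and your high-frequency piece is organized by a case split on the number $|\beta|$ of derivatives landing on the phase. That organization buys something concrete: in the regime $|\beta|<(n-2)/2$ you bound the $\nu$-factor in $L^\infty$ rather than through Lemma~\ref{L2_norm_wF_trans}, which is exactly what keeps the $\nu$-norm at $C^{n-1+|\gamma|}(\bar\Omega)$ when all of $D^\gamma$ falls on $\nu$. The paper's proof, which always routes the $\nu$-factor through Lemma~\ref{L2_norm_wF_trans}, produces $C^{n+|\beta|}$ with $\alpha+\beta=\gamma$ and then asserts $C^{n-1+|\gamma|}$; for the term $\alpha=0$, $\beta=\gamma$ this loses one derivative, so your case distinction is not merely an alternative but repairs a small bookkeeping slip (harmless for the main results, which only need some finite $k$). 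Two cosmetic points, both shared with the paper: the factor $\|f\|_{L^2(\Omega)}$ on the right-hand side of the statement should be $\|f\|_{L^2(\Omega)}^2$ (both proofs produce the square, and the lemma is applied with the square), and inserting the cutoff $\chi$ means the $C^k$ norms of $\mu$, $\nu$ are strictly taken on a slightly larger compact set, which is irrelevant since the weights are globally smooth.
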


\begin{proof}
    Throughout this proof, $C_\Omega$ will serve as a universal positive constant depending only on $\Omega$, which may vary from line to line. Let $\chi\in C_0^\infty(\mathbb{R}^n)$ such that $\chi(x)=1$ for $x\in \Omega$. Then, for $f\in C_0^\infty(\Omega)$,
    \begin{equation*}
        |(R_\mu^* R_\nu D^\gamma f, f)_{L^2(\mathbb{R}^n)}| = |(\chi R_\mu^* R_\nu D^\gamma (\chi f), f)_{L^2(\mathbb{R}^n)}|\leq \|\chi R_\mu^* R_\nu D^\gamma (\chi f)\|_{L^2(\mathbb{R}^n)} \|f\|_{L^2(\mathbb{R}^n)}.
    \end{equation*}
    Let us investigate the first multiplier on the right-hand side. By Proposition 5.8.3 in \cite{SU-book}, $R^*_{\mu}R_{\nu}$ is a $\Psi \mathrm{DO}$ of order $1-n$ with the amplitude given by
    \begin{equation*}
        (2\pi - 1) \frac{\mu(x, \xi/|\xi|)\nu(y, \xi/|\xi|) + \mu(x, -\xi/|\xi|)\nu(y, -\xi/|\xi|)}{|\xi|^{n-1}}.
    \end{equation*}
    Then,
    \begin{equation*}
        \chi R_\mu^* R_\nu D^\gamma (\chi f) = Af + Bf
    \end{equation*}
    with
    \begin{equation*}
        Af(x) = C_\Omega\int_{\mathbb{R}^n} \int_{\mathbb{R}^n} e^{i(x - y)\xi} \chi(x) \frac{\mu(x, \xi/|\xi|)\nu(y, \xi/|\xi|)}{|\xi|^{n-1}} D_y^\gamma (\chi(y) f(y)) dy d\xi
    \end{equation*}
    and
    \begin{equation*}
        Bf(x) = C_\Omega\int_{\mathbb{R}^n} \int_{\mathbb{R}^n} e^{i(x - y)\xi} \chi(x) \frac{\mu(x, -\xi/|\xi|)\nu(y, -\xi/|\xi|)}{|\xi|^{n-1}} D_y^\gamma (\chi(y) f(y)) dy d\xi.
    \end{equation*}
    By integration by parts, we obtain
    \begin{equation*}
        |Af(x)| \leq C_\Omega \sum_{\alpha + \beta = \gamma} \left| \int_{\mathbb{R}^n} \int_{\mathbb{R}^n} e^{i(x - y)\xi} \chi(x) \xi^\alpha \frac{\mu(x, \xi/|\xi|) D_y^\beta \nu(y, \xi/|\xi|)}{|\xi|^{n-1}} \chi(y) f(y) dy d\xi \right|.
    \end{equation*}
    Let $\phi\in C^\infty(\mathbb{R}^n)$ be a function such that
    \begin{equation*}
        \phi(\xi) = 
        \begin{cases}
            0 & |\xi|\leq 1;\\
            1 & |\xi|\geq 2.
        \end{cases}
    \end{equation*}
    We denote
    \begin{equation*}
        A_{\alpha,\beta}^1f(x) =   \int_{\mathbb{R}^n} \int_{\mathbb{R}^n} e^{i(x - y)\xi} (1 - \phi(\xi))\chi(x) \xi^\alpha \frac{\mu(x, \xi/|\xi|) D_y^\beta \nu(y, \xi/|\xi|)}{|\xi|^{n-1}} \chi(y) f(y) dy d\xi 
    \end{equation*}
    and
    \begin{equation*}
        A_{\alpha,\beta}^2f(x) =   \int_{\mathbb{R}^n} \int_{\mathbb{R}^n} e^{i(x - y)\xi} \phi(\xi) \chi(x) \xi^\alpha \frac{\mu(x, \xi/|\xi|) D_y^\beta \nu(y, \xi/|\xi|)}{|\xi|^{n-1}} \chi(y) f(y) dy d\xi .
    \end{equation*}
    Then, the last inequality implies
    \begin{equation}\label{A}
        |Af(x)| \leq C_\Omega \sum_{\alpha + \beta = \gamma} \left(|A_{\alpha,\beta}^1f(x)| + |A_{\alpha,\beta}^2f(x)|\right).
    \end{equation}
    Let us estimate the $L^2$-norm of $A_{\alpha,\beta}^1f$. The kernel of $A_{\alpha,\beta}^1$ is given by 
    \begin{equation*}
        K_{\alpha,\beta}(x,y) = \int_{\mathbb{R}^n} e^{i(x - y)\xi} (1 - \phi(\xi)) \xi^\alpha \frac{\chi(x)\chi(y)\mu(x, \xi/|\xi|) D_y^\beta \nu(y, \xi/|\xi|)}{|\xi|^{n-1}}  d\xi.
    \end{equation*}
    We estimate
    \begin{multline*}
        \sup_{y\in \mathbb{R}^n} \int_{\mathbb{R}^n} |K_{\alpha,\beta}(x,y)| dx\\ \leq \|\chi\|_{L^\infty(\mathbb{R}^n)} \int_{\mathbb{R}^n} \xi^\alpha \frac{(1 - \phi(\xi))}{|\xi|^{n-1}} d\xi \sup_{\omega\in \mathbb{S}^{n-1}} \|\mu(\cdot,\omega)\|_{L^\infty(\mathbb{R}^n)} \sup_{\omega\in \mathbb{S}^{n-1}} \|\nu(\cdot,\omega)\|_{C^{|\beta|}(\mathbb{R}^n)}.
    \end{multline*}
    Hence, 
    \begin{equation*}
        \sup_{y\in \mathbb{R}^n} \int_{\mathbb{R}^n} |K_{\alpha,\beta}(x,y)| dx = C_\Omega \sup_{\omega\in \mathbb{S}^{n-1}} \|\mu(\cdot,\omega)\|_{L^\infty(\mathbb{R}^n)} \sup_{\omega\in \mathbb{S}^{n-1}} \|\nu(\cdot,\omega)\|_{C^{|\beta|}(\mathbb{R}^n)},
    \end{equation*}
    and similarly,
    \begin{equation*}
        \sup_{x\in \mathbb{R}^n} \int_{\mathbb{R}^n} |K_{\alpha,\beta}(x,y)| dy = C_\Omega \sup_{\omega\in \mathbb{S}^{n-1}} \|\mu(\cdot,\omega)\|_{L^\infty(\mathbb{R}^n)} \sup_{\omega\in \mathbb{S}^{n-1}} \|\nu(\cdot,\omega)\|_{C^{|\beta|}(\mathbb{R}^n)}.
    \end{equation*}
    Therefore, by Lemma 18.1.12 in \cite{Hormander3},
    \begin{equation}\label{A_ab_1}
        \|A_{\alpha,\beta}^1f\|_{L^2(\Omega)} \leq C_\Omega \sup_{\omega\in \mathbb{S}^{n-1}} \|\mu(\cdot,\omega)\|_{L^\infty(\mathbb{R}^n)} \sup_{\omega\in \mathbb{S}^{n-1}} \|\nu(\cdot,\omega)\|_{C^{|\beta|}(\mathbb{R}^n)} \|f\|_{L^2(\Omega)}.
    \end{equation}

    Next, we estimate the $L^2$-norm of $A_{\alpha,\beta}^2f$. We denote
    We denote
    \begin{equation*}
        \nu_{\alpha,\beta}(y,\xi) = \frac{\xi^\alpha D_y^\beta\nu(y, \xi/|\xi|)}{|\xi|^{|\alpha|}}\chi(y)
    \end{equation*}
    and
    \begin{equation*}
        v_{\alpha,\beta}(\xi) = \int_{\mathbb{R}^n} e^{-iy\xi} \nu_{\alpha,\beta}(y,\xi) f(y) dy,
    \end{equation*}
    so that
    \begin{multline*}
        A_{\alpha,\beta}^2f(x) = \int_{\mathbb{R}^n}  e^{ix\xi}  \phi(\xi)\frac{\chi(x)\mu(x, \xi/|\xi|)}{|\xi|^{n-1-|\alpha|}} v_{\alpha,\beta}(\xi) d\xi\\ = \int_{\mathbb{R}^n}  e^{ix\xi} \phi(\xi) \frac{\chi(x) \mu(x, \xi/|\xi|)}{|\xi|^{n-1-|\alpha|}} \mathcal{F}[\mathcal{F}^{-1}v_{\alpha,\beta}](\xi) d\xi.
    \end{multline*}
    Next, we note that 
    \begin{align*}
        \sum_{\tau \leq n + 1} \int_{\mathbb{R}^n} \left|D_x^\tau \left( \frac{\chi(x) \phi(\xi)\mu(x, \xi/|\xi|)}{|\xi|^{n-1-|\alpha|}} \right)\right|dx 
        & \leq C_\Omega \sup_{\omega\in \mathbb{S}^{n-1} }\|\chi(\cdot)\mu(\cdot, \omega)\|_{C^{n+1}(\mathbb{R}^n)}\\
        &\leq C_\Omega \sup_{\omega\in \mathbb{S}^{n-1} }\|\mu(\cdot, \omega)\|_{C^{n+1}(\bar\Omega)}.
    \end{align*}
    Therefore, Theorem 18.1.11' in \cite{Hormander3} implies that 
    \begin{align*}
        \int_{\mathbb{R}^n} \Big|  \int_{\mathbb{R}^n}  e^{ix\xi}  \frac{\chi(x) \phi(\xi)\mu(x, \xi/|\xi|)}{|\xi|^{n-1-|\alpha|}} & \mathcal{F}[\mathcal{F}^{-1}v_{\alpha,\beta}](\xi) d\xi \Big|^2 dx\\ 
        &\leq C_\Omega \sup_{\omega\in \mathbb{S}^{n-1} }\|\mu(\cdot, \omega)\|_{C^{n+1}(\bar\Omega)}^2 \|\mathcal{F}^{-1}v_{\alpha,\beta}\|_{L^2(\mathbb{R}^{n})}^2 \\
        &\leq C_\Omega \sup_{\omega\in \mathbb{S}^{n-1} }\|\mu(\cdot, \omega)\|_{C^{n+1}(\bar\Omega)}^2 \|v_{\alpha,\beta}\|_{L^2(\mathbb{R}^{n})}^2.
    \end{align*}
    Hence,
    \begin{equation*}
        \|A_{\alpha,\beta}^2f\|_{L^2(\mathbb{R}^{n})} \leq C_\Omega \sup_{\omega\in \mathbb{S}^{n-1} }\|\mu(\cdot, \omega)\|_{C^{n+1}(\bar\Omega)} \|v_{\alpha,\beta}\|_{L^2(\mathbb{R}^{n})}.
    \end{equation*}
    By Lemma \ref{L2_norm_wF_trans},
    \begin{equation}\label{v_alphabeta}
        \|v_{\alpha,\beta}\|_{L^2(\mathbb{R}^{n})} \leq C_\Omega \sup_{\xi\in \mathbb{R}^n} \|\nu_{\alpha,\beta}(\cdot,\xi)\|_{C^n(\bar\Omega)} \|f\|_{L^2(\Omega)}.
    \end{equation}
    We estimate
    \begin{align*}
        \sup_{\xi\in \mathbb{R}^n} \|\nu_{\alpha,\beta}(\cdot,\xi)\|_{C^n(\bar\Omega)}& = \sup_{\xi\in \mathbb{R}^n} \left\|\frac{\xi^\alpha D^\beta\nu(\cdot, \xi/|\xi|)}{|\xi|^{|\alpha|}}\chi(\cdot)\right\|_{C^n(\bar\Omega)} \leq C_\Omega \sup_{\xi\in \mathbb{R}^n} \left\|D^\beta\nu(\cdot, \xi/|\xi|) \right\|_{C^n(\bar\Omega)}\\
        &\leq C_\Omega\sup_{\xi\in \mathbb{S}^{n-1}} \left\|\nu(\cdot, \xi) \right\|_{C^{n + |\beta|}(\bar\Omega)}.
    \end{align*}
    Therefore,
    \begin{equation*}
        \|A_{\alpha,\beta}^2f\|_{L^2(\mathbb{R}^{n})} \leq C_\Omega \sup_{\omega\in \mathbb{S}^{n-1} }\|\mu(\cdot, \omega)\|_{C^{n+1}(\bar\Omega)}  \sup_{\xi\in \mathbb{S}^{n-1}} \left\|\nu(\cdot, \xi) \right\|_{C^{n + |\beta|}(\bar\Omega)} \|f\|_{L^2(\Omega)}.
    \end{equation*}
    Therefore, by \eqref{A} and \eqref{A_ab_1}, we obtain
    \begin{equation*}
        \|Af\|_{L^2(\mathbb{R}^n)} \leq C_\Omega \sup_{\omega\in \mathbb{S}^{n-1} }\|\mu(\cdot, \omega)\|_{C^{n+1}(\bar\Omega)}  \sup_{\omega\in \mathbb{S}^{n-1}} \left\|\nu(\cdot, \omega) \right\|_{C^{n -1 + |\gamma|}(\bar\Omega)} \|f\|_{L^2(\Omega)}.
    \end{equation*}
    Similarly, we estimate $\|Bf\|_{L^2(\mathbb{R}^n)}$ and conclude that
    \begin{equation*}
       \| \chi R_\mu^* R_\nu D^\alpha (\chi f)\|_{L^2(\mathbb{R}^n)} \leq C_\Omega \sup_{\omega\in \mathbb{S}^{n-1} }\|\mu(\cdot, \omega)\|_{C^{n+1}(\bar\Omega)}  \sup_{\omega\in \mathbb{S}^{n-1}} \left\|\nu(\cdot, \omega) \right\|_{C^{n -1 + |\gamma|}(\bar\Omega)}\|f\|_{L^2(\Omega)}.
    \end{equation*}
    This completes the proof.
\end{proof}

Now, we prove Theorem \ref{w_Radon_t_est}.

\begin{proof}[Proof of Theorem \ref{w_Radon_t_est}]
        Let $f\in C_0^\infty(\Omega)$ and $\Omega \subset \mathbb{R}^n$ compact. We denote $L=(n-1)/2$ and estimate
        \begin{equation*}
            \|R_\mu f\|_{L^2(\mathbb{S}_\omega^{n-1}; H^{L}(\mathbb{R}_\sigma))}^2 = \sum_{l=0}^{L} \|\partial_p^l R_\mu f\|_{L^2(\mathbb{R}\times \mathbb{S}^{n-1})} \leq \sum_{l=0}^{2L} |(R_\mu^*\partial_p^l R_\mu f,f)_{L^2(\mathbb{R}^n)}|.
        \end{equation*}
        We note that
        \begin{align*}
            \partial_p^l R_\mu f(p,\omega)& = \partial_p^l \left( \int_{\omega^\perp} \mu (p\omega + y, \omega) f(p\omega + y) dy \right)\\
            &= \sum_{|\alpha| + |\beta|=l} C_{\alpha,\beta} \int_{\omega^\perp} D^\alpha f(p\omega + y) D^\beta \mu (p\omega + y, \omega)  \omega^{\alpha+\beta} dy.
        \end{align*}
    Let us set
    \begin{equation*}
        \mu_{\alpha,\beta}(x,\omega) = \omega^{\alpha+\beta} D^\beta \mu(x,\omega)
    \end{equation*}
    so that, the previous equality gives
    \begin{equation*}
        \partial_p^l R_\mu = \sum_{|\alpha| + |\beta| = l} C_{\alpha,\beta} R_{\mu_{\alpha,\beta}} D^\alpha
        \quad \text{and} \quad
        R_\mu^*\partial_p^l R_\mu = \sum_{|\alpha| + |\beta| = l} C_{\alpha,\beta} R_\mu^* R_{\mu_{\alpha,\beta}} D^\alpha. 
    \end{equation*}
    By Lemma \ref{est_scal_prod}, we estimate
    \begin{align*}
        |(R_\mu^* R_{\mu_{\alpha,\beta}} D^\alpha f, f)_{L^2(\Omega)}|& \leq C_\Omega \sup_{\omega\in \mathbb{S}^{n-1}} \|\mu(\cdot,\omega)\|_{C^{n+1}(\bar\Omega)} \sup_{\omega\in \mathbb{S}^{n-1}} \|D^{\beta}\mu(\cdot,\omega)\|_{C^{n - 1 + |\alpha|}(\bar\Omega)} \|f\|_{L^2(\Omega)}^2\\
        &\leq C_\Omega \sup_{\omega\in \mathbb{S}^{n-1}} \|\mu(\cdot,\omega)\|_{C^{2n - 2}(\bar\Omega)}^2 \|f\|_{L^2(\Omega)}^2.
    \end{align*}
    This completes the proof.
\end{proof}

\section*{Acknowledgment}
The first author was partially supported by the Ministry of Education and Science of the Republic of Kazakhstan under grant AP22683207.  P.S. partly supported by  NSF  Grant DMS-2154489. L.O. and M.N. were supported by the European Research Council of the European Union, grant 101086697 (LoCal), and the Research Council of Finland, grants 347715 and 353096. Views and opinions expressed are those of the authors only and do not necessarily reflect those of the European Union or the other funding organizations. Neither the European Union nor the other funding organizations can be held responsible for them.

\bibliographystyle{abbrv}
\bibliography{Plamen-ref}

\end{document}